\newtheorem{theorem}{Theorem}
\newtheorem{lemma}[theorem]{Lemma}
\newtheorem{corollary}[theorem]{Corollary}
\newtheorem{proposition}[theorem]{Proposition}
\theoremstyle{definition}
\newtheorem{remark}[theorem]{Remark}
\newtheorem{definition}[theorem]{Definition}
\newtheorem{assumption}[theorem]{Assumption}
\newcommand{\tref}[1]{Theorem \ref{t.#1}}
\newcommand{\lref}[1]{Lemma \ref{l.#1}}
\newcommand{\pref}[1]{Proposition \ref{p.#1}}
\newcommand{\cref}[1]{Corollary \ref{c.#1}}
\newcommand{\fref}[1]{Figure \ref{f.#1}}
\newcommand{\sref}[1]{Section \ref{s.#1}}
\newcommand{\dref}[1]{Definition \ref{d.#1}}
\newcommand{\deflink}[2]{\hyperref[#1]{#2}}
\numberwithin{theorem}{section}
\numberwithin{equation}{section}
\newcommand{\R}{\mathbb{R}}
\newcommand{\grad}{\nabla}
\def\XXint#1#2#3{{\setbox0=\hbox{$#1{#2#3}{\int}$ }
\vcenter{\hbox{$#2#3$ }}\kern-.6\wd0}}
\newcommand{\Diss}{\operatorname{Diss}}
\newcommand{\one}{\mathbf{1}}
\definecolor{niceblue}{rgb}{0,0,0.7}
\def\strikethrough#1{\setbox0\hbox{#1}\rlap{#1}\hbox to \wd0{\hss\strikebox\hss}}
\def\strikebox{\vrule height 0.6\ht0 depth -0.4\ht0 width 1.1\wd0}
\begin{document}
\title[A monotone model of droplet motion]{Minimizing movements solutions for a monotone model of droplet motion}
\author{Carson Collins}
\address{Department of Mathematics, University of California, Los Angeles, California, 90095, USA}
\author{William M Feldman}
\address{Department of Mathematics, University of Utah, Salt Lake City, Utah, 84112, USA}
\keywords{Rate-independent evolution, free boundaries, contact angle hysteresis, free boundary regularity}
\begin{abstract}
    We study the uniqueness and regularity of minimizing movements solutions of a droplet model in the case of piecewise monotone forcing. We show that such solutions evolve uniquely on each interval of monotonicity, but branching non-uniqueness may occur where jumps and monotonicity changes coincide.  This classification of minimizing movements solutions allows us to reduce the quasi-static evolution to a finite sequence of elliptic problems and establish $L^\infty_tC^{1,1/2-}_x$-regularity of solutions.
\end{abstract}
\maketitle

\section{Introduction}
We consider an energetic model for the rate-independent evolution of capillary droplets under the effects of contact angle hysteresis.   This type of model was introduced by DeSimone, Grunewald, and Otto in \cite{DeSimoneGrunewaldOtto}, the existence theory was developed in \cite{alberti2011}, and regularity and uniqueness results were proved under (piecewise) monotonic forcing and specific geometric hypotheses in \cite{FeldmanKimPozar}.  In this paper we will establish a uniqueness and regularity theory in general geometry, but still using (piecewise) monotonic forcing in an essential way.

In this model, the free surface of a droplet above a smooth flat surface is considered as the graph of a non-negative $H^1$ function $u$ on a domain $U \subset \R^d$. The positivity set $\Omega(u) = \{ u > 0 \}$ is the liquid-solid contact region. The energy will be the Alt-Caffarelli one-phase functional:
    \begin{equation}
        \mathcal{J}[u] = \int_U |\nabla u|^2 + \one_{\{u > 0\}} =: \mathcal{D}[u] + |\Omega(u)|
    \end{equation}
    Here, the Dirichlet energy $\mathcal{D}[u]$ is the linearization of the surface area of the free surface.
    
    The second piece of the model is a dissipation distance.  This distance measures the energy dissipated due to frictional forces as the contact line moves from one state to another.  The dissipation distance between two contact regions $V_0$ and $V_1$, sets in $\R^d$, is defined
    \begin{equation}
        \Diss[V_0, V_1] = \mu_+|V_1 \setminus V_0| + \mu_-|V_0\setminus V_1|
    \end{equation}
   Here, $\mu_-, \mu_+$ are constant parameters representing the frictional force per unit length as the droplet recedes or advances respectively. We will require $0 < \mu_- < 1$ and $0 < \mu_+$. We will often write $\Diss[u_0, u_1]$ to mean $\Diss[\Omega(u_0), \Omega(u_1)]$.

   We consider a quasistatic evolution of states $u(t)$ associated with this energy functional and dissipation distance. In this setting, the time scale of variations of the forcing is assumed to be much slower than the time scale for viscous regularization of the free surface shape so that the droplet is always in local equilibrium.  Specifically, the graph of $u(t)$ is harmonic, and the ``force'' per unit length, $|\nabla u|^2 - 1$, on the interface coming from the first variation of free energy is within the pinning interval $[\mu_-,\mu_+]$, i.e. it is bounded by the coefficient of static friction for receding and advancing contact lines respectively. As is common in rate-independent literature \cite{mielke2015book}, we strengthen local equilibrium to a global stability condition: at every time $u(t)$ minimizes the energy / dissipation functional based at $u(t)$
\begin{equation}
    v \mapsto \mathcal{E}[u(t),v]:=\mathcal{J}[v]+\Diss[u(t),v]
\end{equation}   
over an appropriate admissible class.  

     The evolution is driven by an external forcing.  A natural physical choice of forcing, corresponding to evaporation or condensation processes, is a time varying volume constraint. Instead, as in \cite{FeldmanKimPozar}, we consider a Dirichlet driving force
    \[ u(t,x) = F(t,x) \ \hbox{ on } \ \partial U.\]
    The Dirichlet forcing problem has improved monotonicity and uniqueness properties, and can be viewed as a localization of the volume constrained problem.

     Our paper will consider minimizing movements solutions, which are constructed as the limits of the time incremental scheme
   \[ u(t_k)\in \mathrm{argmin}\{ \mathcal{J}[v] + \Diss[u(t_{k-1}), v] : v|_{\partial U} = F(t_k) \} \]
    In this, $F(t,x)$ is a time-varying Dirichlet forcing which drives the quasi-static evolution of the droplet.
   
    Our main result is a uniqueness property of minimizing movements solutions in the case of piecewise monotone forcing.  As a corollary, applying regularity results on Bernoulli obstacle problems from the literature \cites{ChS,FerreriVelichkov}, we can also show the (almost) optimal regularity $\textup{L}^\infty_tC^{1,\frac{1}{2}-}_x$ in $d \leq 4$.

\subsection{Details of the model}

We now make precise the notion of minimizing movements solution that we will work with in the paper.

\begin{definition}\label{def:mmsln}\label{d.mmsln}
    Let $P_\delta$ be a system of $\delta$-fine partitions of $[0,T]$. A \emph{minimizing movements scheme} for the system of partitions is a sequence $u^{t_k}_\delta, t_k\in P_\delta$ such that
\begin{equation}
    u^{t_k}_\delta \in \mathrm{argmin} \{ \mathcal{E}[u^{t_{k-1}}_\delta, u'] : u'\in H^1_0(U) + F(t_k)\}
\end{equation}
For concision, we write
\begin{equation}\label{eq:argmin_notation}
    \mathcal{M}[u, F] = \mathcal{M}[\Omega(u), F] := \mathrm{argmin} \{ \mathcal{E}[u^{t_{k-1}}_\delta, u'] : u'\in H^1_0(U) + F(t_k)\}
\end{equation}
We note that $\mathcal{M}[u, F]$ is a set which may contain multiple minimizers in general, due to the nonuniqueness of the exterior Bernoulli problem. This is closely tied to the ability of the evolution to jump.

Define $u_\delta(t,x)$ by discontinuous interpolation in time: $u_\delta(t) = u^{t_k}_\delta$ when $t\in [t_{k}, t_{k+1})$. It is then known that there exists a sequence $\delta_k\to 0$ such that for each $t$, $u_{\delta_k}(t)$ converges uniformly in $x$ as $k\to \infty$. A \emph{minimizing movements solution} is any such subsequential limit of the $u_\delta$ as $\delta\to 0$.
\end{definition}

Generally speaking, in the literature, minimizing movements schemes as in Definition \ref{def:mmsln} are used to construct energy solutions in the following sense: 

\begin{definition}\label{def:energy_sln}
    A measurable $u : [0,T] \to H^1(U)$ is an \emph{energy solution} of the dissipative evolution driven by forcing $F$ if
    \begin{enumerate}
        \item (Forcing) For all $t \in [0,T]$
        \[u(t,x) = F(t,x) \ \hbox{ on } \ \partial U.\]
        \item (Global stability) The solution $u(t)$ satisfies for all $t \in [0,T]$
        \[\mathcal{J}[u(t)] \leq \mathcal{J}[u'] + \Diss[u(t),u'] \ \ \hbox{ for all } \ u'  \in F(t) + H^1_0(U)\]
        \item (Energy dissipation inequality) For all $0 \leq t_0 \leq t_1 \leq T$ it holds
        \[\mathcal{J}[u(t_0)] - \mathcal{J}[u(t_1)] + \int_{t_0}^{t_1}\int_{\partial U} 2F(x,t)\partial_\nu u(x,t)  \ dx dt \geq \Diss[u(t_0),u(t_1)].\]
    \end{enumerate}
\end{definition}

We remark that the dissipation inequality for energy solutions becomes equality, if one replaces the right hand side with the \textit{total dissipation}; see \cite{FeldmanKimPozar}. On the other hand, we refer to \cite{alberti2011} for an example with $u(t)$ only locally minimizing $\mathcal{E}[u(t), \cdot]$ for which the dissipation inequality is strict at a jump.

We show below, in \tref{mm-energy}, that minimizing movements solutions are indeed energy solutions.  However, we are currently unable to classify all the energy solutions, see \sref{open-q} for more discussion.

\subsection{Main result}
Our main result is on the uniqueness of minimizing movements solutions.  First we show, in the case of monotonically varying forcing, that there is only one minimizing movements solution modulo redefinition at jump times. Moreover, any time discrete scheme will simply sample from this solution at the corresponding times.

Next we employ these ideas in piecewise monotone case. However now an important complication arises.  If the monotone evolution has a jump at the exact time that the forcing changes monotonicity, then there will be a solution which takes the jump and a solution which does not take the jump. Due to friction, a solution which jumps up does not immediately jump back down when the forcing is reversed (see \lref{diff_ics_merge} below), and so the limiting solutions should not be the same in general. A depiction of several such scenarios can be found in \fref{mon-change-example}.  Our main result is that this is the only source of non-uniqueness for minimizing movements solutions.

\begin{definition}
    We say that $F(x,t)$ is strictly increasing if, for any $s < t$, we have $F(x, s) \leq F(x, t)$ for all $x\in \partial U$, and moreover, on each component of $\partial U$ there exists a $y$ such that $F(y, s) < F(y, t)$. We say that $F$ is strictly monotone if it is strictly increasing or strictly decreasing.
\end{definition}

\begin{theorem}\label{t.main}
Let $0 = t_0 < \dots < t_N = T$ and suppose that $F(t, x)$ is strictly monotone in $t$ on each $[t_j, t_{j+1}]$. We also adopt basic regularity assumptions on $U$, $F$, and $\Omega_0$ listed in \ref{assumptions}.

Then any sequence $u(t_i)$, chosen recursively by $u(t_i)\in \mathcal{M}[u(t_{i-1}), F(t_i)]$, defines a minimizing movements solution at the intervening times via
\begin{equation}\label{eq:thm_eqn}
    u(t)\in \mathcal{M}[u(t_i), F(t)] \qquad t \in (t_i, t_{i-1}).
\end{equation}
This is a genuine definition of $u$, in the sense that all solutions with the same $u(t_i)$ jump at the same times and agree up to value at jumps. Conversely, all minimizing movements solutions have the form \eqref{eq:thm_eqn} and are piecewise monotone on each $[t_j, t_{j+1}]$. All minimizing movements solutions are right continuous at monotonicity changes, and there is a unique left continuous minimizing movements solution, characterized by not jumping at monotonicity changes.
\end{theorem}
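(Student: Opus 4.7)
The plan is to reduce the piecewise monotone case to the monotone case via a concatenation argument, so the heart of the proof is the monotone uniqueness statement (alluded to in the paragraph before the theorem) applied separately on each $[t_j,t_{j+1}]$, together with an analysis of the ``branching'' available when the discrete scheme straddles a monotonicity change.

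First I would install the following monotone building block, which I expect to be proved just before \tref{main}: if $F$ is strictly monotone on $[a,b]$ and $w(a)$ is fixed, then the minimizing movements solution $w$ on $[a,b]$ built from $w(a)$ is unique up to values at its (countably many) jump times, is itself piecewise monotone in the obvious sense, and is right continuous at $t=a$. The proof uses the monotonicity of the iterated argmin (the discrete $u^{t_k}_\delta$ is itself monotone in $k$ in the sense of containment of positivity sets), so the full sequence $u_\delta$ has a pointwise monotone limit rather than just a subsequential one, and any two discretizations agree except at jump times. I would also record here that $w(a^+)\in\mathcal{M}[w(a),F(a^+)]=\mathcal{M}[w(a),F(a)]$.

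With this in hand, existence and the forward direction are by concatenation. Given any recursive choice $u(t_i)\in\mathcal{M}[u(t_{i-1}),F(t_i)]$, define $u$ on each $(t_i,t_{i+1})$ by the monotone building block applied with initial value $u(t_i)$ at time $t_i$, so that formula \eqref{eq:thm_eqn} holds. To see this is a genuine minimizing movements solution I would approximate by a family of partitions $P_\delta$ whose mesh tends to $0$ and each of which contains all $t_j$; on each monotonicity interval the discrete scheme then converges to the monotone building block by the preceding step, and a diagonal argument produces a single $\delta$-sequence along which convergence holds at every $t$. The well-definedness claim (same $u(t_i)$ implies same jump times and values away from jumps) is immediate from uniqueness in the monotone building block on each $[t_i,t_{i+1}]$.

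For the converse, let $u$ be any minimizing movements solution, obtained as the uniform-in-$x$ limit of some $u_{\delta_k}(t)$. The nontrivial point is to show $u(t_i)\in\mathcal{M}[u(t_{i-1}),F(t_i)]$ for every monotonicity-change time $t_i$. I would choose the partitions $P_{\delta_k}$ so that, after a small perturbation that does not affect the limit (using stability of $\mathcal{E}$ in its arguments), each $t_i$ is a partition point; then the discrete scheme at $t_i$ gives $u^{t_i}_{\delta_k}\in\mathcal{M}[u^{t_{i-1}}_{\delta_k},F(t_i)]$, and lower semicontinuity of $\mathcal{J}$ together with continuity of $\Diss$ on the convergent positivity sets passes to the limit to give $u(t_i)\in\mathcal{M}[u(t_{i-1}),F(t_i)]$. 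Applying the uniqueness side of the monotone building block on each $(t_i,t_{i+1})$ with initial value $u(t_i)$ shows $u$ coincides with the solution defined by \eqref{eq:thm_eqn}, hence is piecewise monotone. Right continuity at $t_i$ is the corresponding property of the monotone building block at its initial time. Finally, left continuity at $t_i$ forces $u(t_i)=u(t_i^-)$, i.e. $u(t_i)$ must be the old state rather than a proper jump target; since $u(t_i^-)$ is a minimizer against itself in $\mathcal{M}[u(t_i^-),F(t_i)]$ by stability, this choice is always available and unique, singling out one recursive sequence and hence one solution.

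The main obstacle I anticipate is the passage-to-the-limit step across a monotonicity change: one has to rule out that the discrete scheme, with partition points close to but not exactly at $t_i$, produces a limit $u(t_i)$ that is neither the pre-jump nor a valid post-jump minimizer. Handling this cleanly is where the assumption that $P_\delta$ can be refined to contain the $t_j$ (or alternatively, a continuity argument for $\mathcal{M}[\,\cdot\,,F(\,\cdot\,)]$ in its second argument along monotone approaches to $t_i$) enters essentially, together with the rigidity that in the strictly monotone regime just before $t_i$ the argmin set is a singleton away from the discrete jump times. The rest is a packaging of the monotone building block.
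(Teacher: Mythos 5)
Your proposal has the same skeleton as the paper's argument: a monotone building block (the paper's Theorem \ref{thm:strictly_monotone_uniqueness}), concatenation across the monotonicity change times, a passage-to-the-limit argument for the converse, and a separate discussion of the left- and right-continuous representatives. Two of your steps, however, have genuine gaps.

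First, in the converse direction you propose to ``choose the partitions $P_{\delta_k}$ so that, after a small perturbation that does not affect the limit, each $t_i$ is a partition point.'' This is not justified and is the crux of the matter: the minimizing movements solution $u$ is the limit along a \emph{given} system of partitions, and since $\mathcal{M}[\cdot,\cdot]$ is set-valued, inserting a partition point can change the discrete scheme and therefore the limit. One cannot appeal to ``stability of $\mathcal{E}$ in its arguments'' without an argument, and this is exactly what you flag as the main obstacle. The paper resolves this not by perturbing the partitions but by proving a two-step convergence lemma (Lemma \ref{lem:convergence_at_monotonicity_change}): if $\Omega_k\to\Omega_0$ and $s_k\downarrow s_0$ is a monotonicity change, then any $u_k\in\mathcal{M}[\Omega_k,F(s_k)]$ followed by $v_k\in\mathcal{M}[u_k,F(t_k)]$ with $t_k\to t$ converges, after a subsequence, to an element of $\mathcal{M}[\Omega_0,F(t)]$. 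This, applied inductively to the last discrete step before $t_i$ and the first discrete step after, is how Theorem \ref{thm:piecewise_monotone_1} handles partitions that straddle $t_i$ without containing it.

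Second, you attribute right continuity at a monotonicity change $t_i$ to ``the corresponding property of the monotone building block at its initial time.'' This is false: the monotone building block started from $w(a)$ with $w(a)\in\mathcal{M}[w(a),F(a)]$ need \emph{not} be right continuous at $a$. If $\mathcal{M}[w(a),F(a)]$ contains an element strictly larger than $w(a)$ (so that $w(a)$ is stable but not $\mathcal{M}^{\max}$), the forward evolution jumps immediately at $a^+$. In the piecewise monotone setting this would correspond to $u(t_i^+)\neq u(t_i)$. The paper's Proposition \ref{prop:value_at_monotonicity_change} rules this out by exploiting the interplay with the monotone evolution \emph{before} $t_i$: using the comparison lemma one shows $u(t_i^-)=\mathcal{M}^{\min}[u(t_i^-),F(t_i)]$ and $u(t_i^+)=\mathcal{M}^{\min}[u(t_i),F(t_i)]$, and then a dissipation triangle inequality shows $\mathcal{M}[u(t_i),F(t_i)]\subseteq\mathcal{M}[u(t_i^-),F(t_i)]$, giving $u(t_i^-)\leq u(t_i^+)$. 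Combined with the componentwise ordering of $u(t_i^-),u(t_i),u(t_i^+)$ from Lemma \ref{lem:basic_estimates}(v) and the post-change monotonicity $u(t_i^+)\leq u(t_i)$, this pinches $u(t_i)=u(t_i^+)$. So right continuity is a genuinely two-sided statement, not a building-block feature.

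Your handling of the left-continuous representative is essentially right, though the justification should be that $u(t_i^-)\in\mathcal{M}[u(t_{i-1}),F(t_i)]$ (which follows from the monotone analysis on $[t_{i-1},t_i]$ and lower semicontinuity of the energy along the left limit), so that $u(t_i)=u(t_i^-)$ is a valid recursive choice; ``$u(t_i^-)$ is a minimizer against itself'' is the wrong minimization problem to invoke. The rest of your outline --- the role of Lemma \ref{lem:max_min_mov_stability} for refinement stability, the bracketing between $\mathcal{M}^{\min}$ and $\mathcal{M}^{\max}$, and the diagonal construction for existence --- matches the paper.
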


We stress that solutions may genuinely branch based on jumps at monotonicity changes (see \fref{mon-change-example}). 

This uniqueness result reduces the minimizing movements evolution to a finite sequence of elliptic Bernoulli obstacle problems. Then we can apply the regularity results of Chang-Lara and Savin \cite{ChS} and Ferreri and Velichkov \cite{FerreriVelichkov} to derive the following (almost) optimal result on the space-time regularity of minimizing movements solutions.

\begin{corollary}\label{c.low_dim_regularity}
    For any $ 0 < \beta < \frac{1}{2}$, if $F\in L^\infty([0, T]; C^{1,\beta}(\partial U))$ is piecewise monotone with $N$ monotonicity changes, then in dimension $d \leq 4$, we have $u(t)\in C^{1,\beta}(\overline{\Omega(u(t))})$, with an estimate uniform in time depending on all parameters and $N$.
\end{corollary}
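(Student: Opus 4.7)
The plan is to combine \tref{main} with the free boundary regularity results of Chang-Lara--Savin and Ferreri--Velichkov, processing the $N$ monotonicity intervals one at a time. First I would use \tref{main} to identify the minimizing movements solution $u(t)$ on each monotonicity interval as the minimizer of a Bernoulli obstacle problem. Concretely, for $t \in (t_i, t_{i+1})$ with $F$ strictly increasing, \tref{main} gives $u(t) \in \mathcal{M}[u(t_i), F(t)]$, and every such minimizer satisfies $\Omega(u(t)) \supset \Omega(u(t_i))$. Expanding the dissipation $\Diss[u(t_i), u(t)] = \mu_+ |\Omega(u(t)) \setminus \Omega(u(t_i))|$ and dropping an additive constant, $u(t)$ minimizes
\begin{equation*}
    v \mapsto \int_U |\nabla v|^2 + (1+\mu_+)\one_{\{v > 0\}} \,dx
\end{equation*}
over $v - F(t) \in H^1_0(U)$ subject to $\{v > 0\} \supset \Omega(u(t_i))$. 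The decreasing case is symmetric, with coefficient $1 - \mu_- \in (0,1)$ and the reverse inclusion. Both are one-phase Bernoulli obstacle problems with boundary data $F(t) \in C^{1,\beta}(\partial U)$ and fixed Bernoulli constants $\sqrt{1 \pm \mu_\pm}$.

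The next step is to invoke the cited regularity results. The ``new'' part of $\partial \Omega(u(t))$, lying strictly outside (respectively, inside) $\overline{\Omega(u(t_i))}$, satisfies the one-phase Bernoulli condition $|\nabla u(t)|^2 = 1 \pm \mu_\pm$, so \cite{ChS, FerreriVelichkov} directly give a $C^{1,\beta}$ graph representation of this portion of $\partial \Omega(u(t))$ and $C^{1,\beta}$ regularity of $u(t)$ up to it, for $d \leq 4$ and any $\beta < 1/2$. The ``coincidence'' portion of $\partial \Omega(u(t))$ coincides with pieces of $\partial \Omega(u(t_i))$ and inherits its regularity from the preceding time step.

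I would then induct on the $N$ monotonicity intervals, starting from the regularity of $\Omega_0$ assumed in \ref{assumptions}. Each inductive step produces a $C^{1,\beta}$ estimate depending on $\|F\|_{L^\infty_t C^{1,\beta}_x}$, $\mu_\pm$, $U$, and the previous iterate's constant; compounding through the at most $N$ jumps yields the uniform-in-time estimate claimed in the corollary.

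The hard part will be matching the ``new'' and ``coincidence'' components of $\partial \Omega(u(t))$ at their common boundary: a priori the two pieces need not join smoothly, and one must extract from \cite{ChS, FerreriVelichkov} a version of their estimates robust enough to accept an obstacle with only the regularity provided by the inductive hypothesis, with constants that do not blow up at contact points. Once this matching is under control, the dimensional restriction $d \leq 4$ comes for free from the range of validity of the cited free boundary results and the absence of singular one-phase minimizing cones in those dimensions.
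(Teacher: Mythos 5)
Your plan is structurally identical to the paper's proof: use \tref{main} to identify $u(t)\in\mathcal{M}[u(t_i),F(t)]$ on each monotone interval, recognize this as a one-phase Bernoulli obstacle problem with the previous interval's positivity set as obstacle, invoke the flat-implies-$C^{1,\beta}$ theory of Chang-Lara--Savin and Ferreri--Velichkov, and iterate $N$ times. The ``hard part'' you flag at the end --- controlling regularity at points of $\partial\Omega(u(t))\cap\partial\Omega(u(t_i))$ where the new free boundary detaches from the obstacle --- is genuinely the crux, and the paper bridges exactly this gap with a preliminary result (\tref{regularity_estimate} and its quantitative form \cref{quantitative_regularity_estimate}). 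The idea there is a blowup analysis at contact points $x_0\in\partial\Omega_0\setminus K_0$: using nondegeneracy plus the non-tangential derivative lemma of Caffarelli--Salsa, one shows the $1$-homogeneous blowup of $u$ at $x_0$ must be linear on the interior half-space $\{y\cdot n_{x_0}>0\}$, and then a harmonic-replacement competitor rules out any positivity in the exterior half-space, so the blowup is a half-plane and the point is regular. Consequently, the singular set produced at each step is closed, disjoint from the old free boundary, and empty in $d\leq 4$, which is what makes the iteration across $N$ intervals go through with a quantitative flatness estimate at the initial scale (the compactness-contradiction argument behind \cref{quantitative_regularity_estimate}). One small addition to your write-up: after obtaining $C^{1,\beta}$ regularity of the free boundary via the obstacle theory, the paper closes the loop with Schauder estimates using $F(t)\in C^{1,\beta}(\partial U)$ to upgrade to $u(t)\in C^{1,\beta}(\overline{\Omega(u(t))})$ uniformly in $t$, covering the fixed-boundary portion as well.
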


 A similar result is possible in higher dimensions, with additional technicalities to handle singular points for the Bernoulli one-phase problem.  The statement in higher dimensions, and the proofs can be found below in \sref{bernoulli-obs-reg}.
 
It is unclear if this result is also true at the endpoint $\beta = 1/2$.  R\"uland and Shi \cite{RulandShi}*{Remark 3.11} observed that solutions of the thin obstacle problem with $C^{1,1/2}$ obstacle may lose a logarithm in the modulus of continuity of the derivative.  Since our result is based on repeated solutions of Bernoulli obstacle problems we would need to carefully analyze the the obstacle regularity with these logarithmically corrected moduli to obtain a result at $\beta = 1/2$.

\subsection{Literature}

 Our model is derived from a model for contact angle hysteresis introduced by \cites{DeSimoneGrunewaldOtto,alberti2011}. The model considers a slowly driven capillary droplet, with the standard capillary free energy based on the total interfacial areas.  Contact angle hysteresis is introduce via the dissipation distance. This is a phenomenological model of the effects of contact line pinning which matches known behaviors but has not been derived directly from a microscopic model.  Due to the slow driving, inertial and viscous effects are ignored, and the evolution is determined entirely by the forcing, energetics, and frictional dissipation distance. Said forcing, in \cite{alberti2011}, takes the form of a varying volume constraint, corresponding to adding or removing liquid from the droplet over time. The theory for the model with this constraint was developed further and generalized to $d\geq 3$ in \cite{alberti2011}, which provides existence results, many examples, and deeper explanations of the physics and mathematics involved.

The case of Dirichlet forcing in place of volume constraint was first considered in \cite{FeldmanKimPozar}. This type of forcing provides many mathematical advantages via monotonicity and comparison properties. The Dirichlet constraint is relevant physically in the case when one considers the local behavior near the contact line taking the macroscopic behavior as a given to provide the external Dirichlet driving for the local behavior. One particular scenario is the very low velocity Wilhelmy plate method where a plate or fiber is lifted slowly from a liquid in order to measure dynamic contact angles, see for example \cite{SauerCarney}. In the reference frame of the moving plate, the height of the bulk free surface could be approximated by a Dirichlet condition.  The paper \cite{FeldmanKimPozar} considered, as we do here, the Bernoulli one-phase energy functional instead of the capillarity energy. Although the Bernoulli energy with Dirichlet forcing has simplified twice from the capillarity energy with volume constraint, it serves as a useful toy model, which sees the nonlinear and non-graphical nature of the PDE determining the contact line. The paper \cite{FeldmanKimPozar} also introduces a notion of \textit{obstacle solution} for piecewise monotone forcing based on minimal supersolutions and maximal subsolutions. This type of solution features a different jump law than energy solutions, jumping ``as little and as late as possible". This is in contrast with the (global) energy solutions we consider here, which typically jump ``as early as possible" before topological singularities occur. Under strong assumptions of star-shapedness for the initial data, that paper shows that obstacle and energy solutions coincide, are unique, and have $C^{1,1/2-}$ free boundary regularity. Thus, in comparison to that paper, we are able to significantly weaken the hypotheses on the domain geometry, but we remain unable to handle the non-monotone evolution.

Both the functional $\mathcal{J}[u]$ and the augmented functional $\mathcal{E}[\Omega_0, u]$ lead to Bernoulli one-phase free boundary problems where the minimizer both vanishes on the free boundary and satisfies a Neumann condition there. To make this connection clear, we note that $\mathcal{E}$ can be rewritten as
\[ \mathcal{E}[\Omega_0, u] = \int_U |\nabla u|^2 + \left((1 - \mu_-)\one_{\Omega_0} + (1 + \mu_+)\one_{U\setminus \Omega_0}\right)\one_{\{ u > 0 \}}. \]

The main difference between the free boundary problems for $\mathcal{J}[u]$ and $\mathcal{E}$ comes from the jump discontinuity in the coefficients of the augmented functional, which is what allows for pinning. The minimizers of $\mathcal{J}[u]$ have been well-studied since work of Alt and Caffarelli \cite{AltCaffarelli}.  For example, it is known that the free boundary of minimizers is smooth outside of a singular set of dimension at most $d-5$, see below in \sref{bernoulli-obs-reg} for further discussion. The one-phase problem with jump discontinuity along a manifold has been studied more recently, first in \cite{ChS} and later in \cite{FerreriVelichkov}. These sources show that near the jump set, the free boundary behaves like a solution to the thin obstacle problem, and so the sharp $C^{1,1/2}$ regularity of the thin obstacle problem limits the regularity of the free boundary problem. We remark that this is a comparatively rough thin obstacle problem, since for typical uses of $\mathcal{E}$, the jump set $\Omega_0$ is the free boundary at a previous time step. The thin obstacle problem with $C^{1,\alpha}$ obstacle is studied in \cite{RulandShi}, where they show that, for a $C^{1,1/2}$ obstacle, the resulting free boundary estimate should be $C^{1,1/2}$ with logarithmic correction. We refer to that paper for further details regarding the endpoint, and we report our free boundary regularity as $C^{1,1/2-}$ for simplicity. 

The theory of rate-independent motion was developed to approach phase transitions with hysteresis, especially dry friction, plasticity, and fracture \cites{MT1,MT2}.  There is a large literature of such systems, and we refer to \cite{mielke2015book} for a more thorough review. There are only a few works on uniqueness and higher spatial regularity, and the existing methods seem to be inapplicable to droplet motion problems.    The paper \cite{RindlerSchwarzacher} studies the case of convex energy functional, which has uniqueness and time regularity since convexity rules out jumps. This is unavailable to us since the volume term in $\mathcal{J}$ is highly non-convex. Other papers do handle uniqueness with jumps \cite{Mach}, but with a regular energy in a finite dimensional state space. The work \cite{RossiStefanelliThomas} considers a monotonically forced model for delamination associated with the perimeter functional. There the passage to the limit in minimizing movements schemes to attain the energy dissipation balance is more challenging, and specific examples with $C^1$ regularity of the delamination are discussed.  Our work, using monotonicity in a key way to describe the uniqueness and higher regularity of an energetic evolution without specific geometric hypotheses, appears to be new in the literature.  

\subsection{Open questions and challenges}\label{s.open-q}

We contrast our findings for the evolution with Dirichlet forcing against what is known for the volume-constrained evolution. Minimizing movements for the latter can exhibit many more types of nonuniqueness, even for spherical profiles and a monotonically varying constraint. For example, given a profile at time $t$ which is fully depinned, one should expect increasing volume to lead to a profile at time $t + \delta$ which compactly contains the original profile (as the entire boundary moves). But in fact, any translation of the profile at time $t + \delta$ which contains the profile at time $t$ has the same dissipation cost and is equivalent in the view of energy minimization. By means of such translations, minimizing movements solutions to the volume-constrained problem may drift apart. The key difference with our problem seems to be the rigidity of the fixed boundary $\partial U$, which enables us to use comparison arguments. It is interesting to consider whether our results should suggest a similar generic uniqueness for the volume-constrained model, up to translation of components or under additional constraints that restrict drift.

\begin{figure}
    \centering
    \begin{tabular}{ c }
         \includegraphics[scale=0.2]{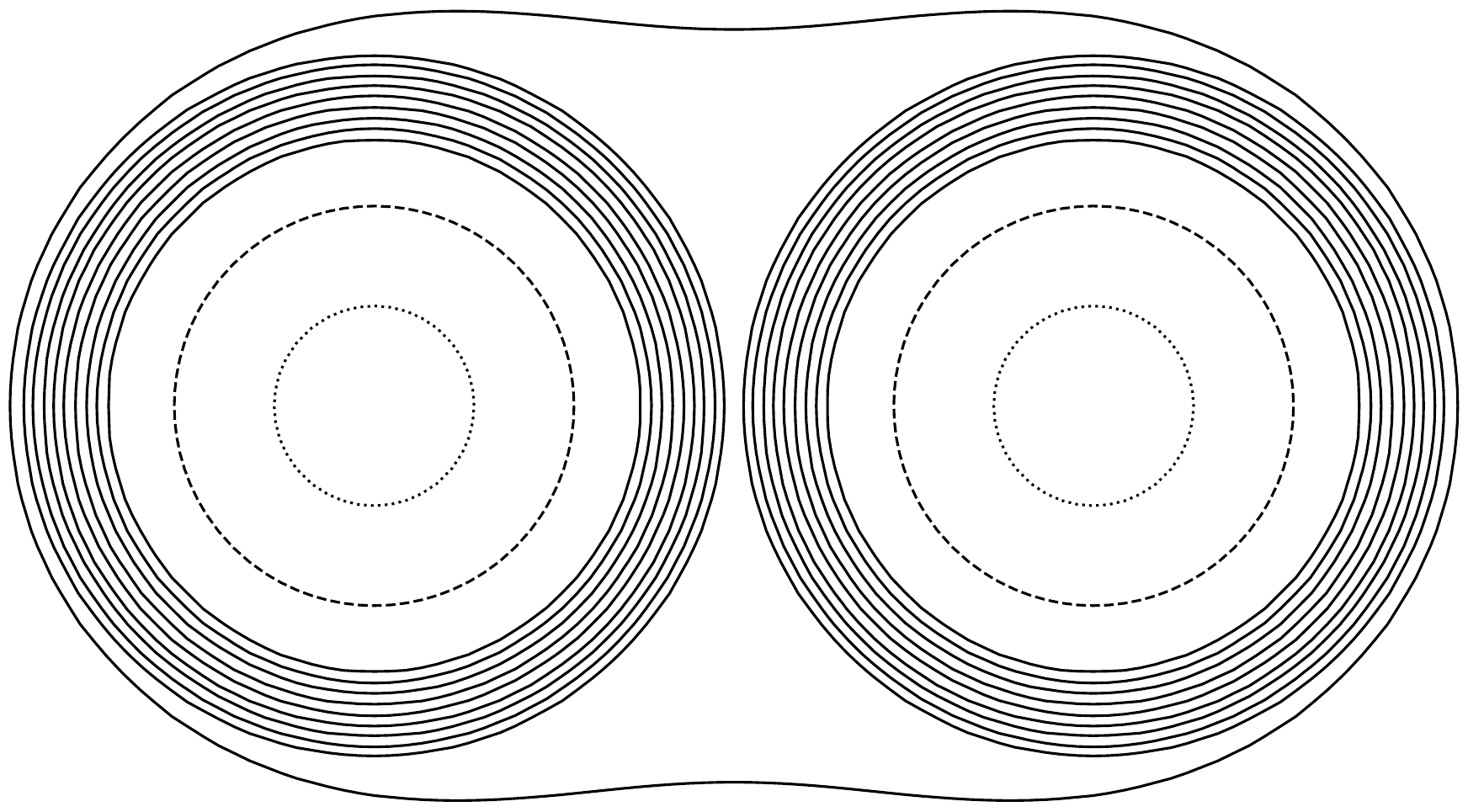}
    \end{tabular}
    \begin{tabular}{ c }
         \includegraphics[scale=0.2]{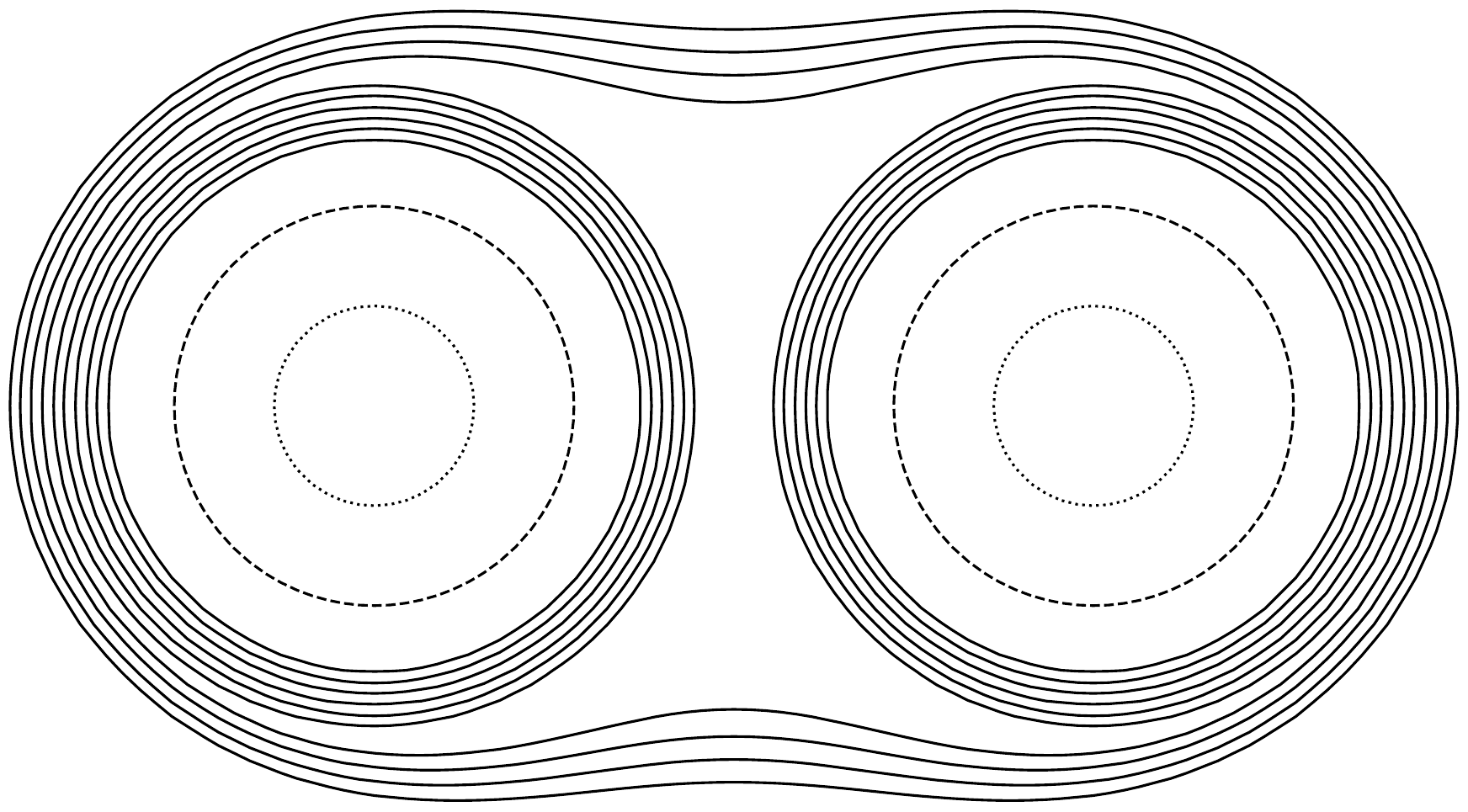}  \\
         \includegraphics[scale=0.2]{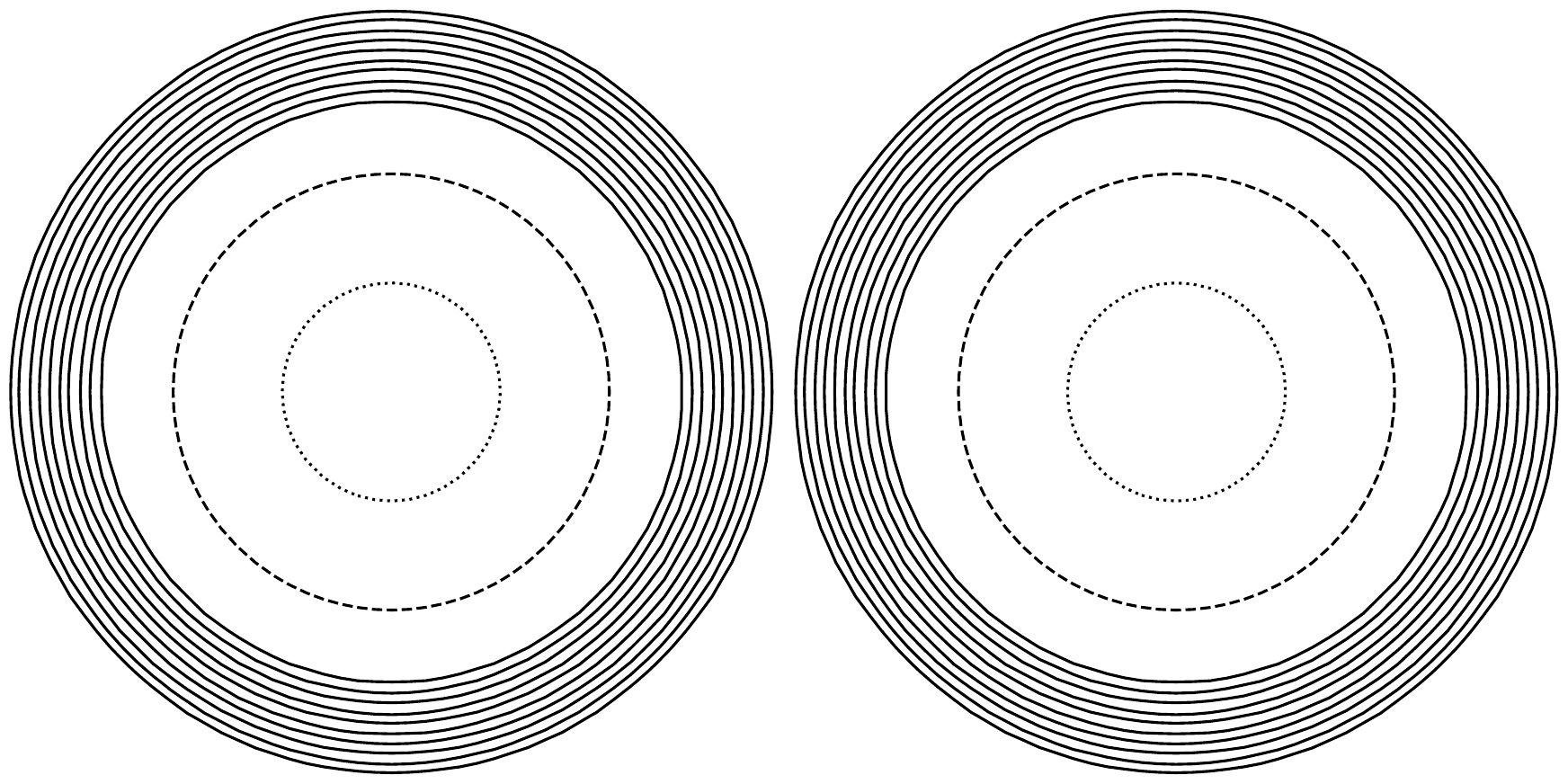}
    \end{tabular}

    \begin{tabular}{cc}
        $0 \leq t \leq t_1$ \hspace{.15\textwidth} &  \hspace{.18\textwidth}$t_1 \leq t \leq t_2$
    \end{tabular}

    \caption{Example of non-uniqueness due to monotonicity change at jump time. Dotted lines indicate Dirichlet boundary $\partial U$. Dashed lines indicate $\partial \Omega_0$. Solid lines indicate $\partial \Omega(t)$. Forcing $F(t)$ is constant in $x$, increasing on $[0, t_1]$, and decreasing on $[t_1, t_2]$.   
    \emph{Left}: Forcing increases until the disconnected annuli jump to a merged profile. At the jump time, the merged profile is energetically equivalent to the two annuli, so the evolution may choose either profile.
    \emph{Top right}: Continuation of the evolution from the left, assuming the jump is taken. Note that due to hysteresis, the profile de-pins and slides before jumping down to an unmerged profile.
    \emph{Bottom right}: Continuation of the evolution from the left, assuming the jump is not taken.}
    \label{f.mon-change-example}
\end{figure}

With piecewise monotone forcing, there can be many different minimizing movements solutions continuing from a monotonicity change, as seen in \fref{mon-change-example}. We expect that this branching non-uniqueness is generically binary. However one can contrive multiple merging events to happen at the same time, possibly leading to a larger number of branching non-unique solutions. 

Even in the context of monotone forcing we are unable to fully classify the energy solutions.  The minimality in the energy dissipation inequality is hard to use due to a lack of control of the energy flux term.  Definitely in the piecewise monotone case there are energy solutions which are not minimizing movements solutions in the sense of \dref{mmsln}. By \tref{main} minimizing movements solutions are right continuous at monotonicity changes, but by \cite{FeldmanKimPozar}*{Proposition 5.16} the left continuous version would be a solution as well.  This seems to be a minor discrepancy though due to the particular definition of minimizing movement solution that we use here, and we are unsure if there are seriously distinct types of energy solutions.  This connects to the difficulty in \cite{FeldmanKimPozar}[Theorem 1.6] where the dynamic slope condition can only be justified in the almost everywhere sense.

\subsection{Acknowledgments}  Both authors were partially supported under the NSF grant DMS-2407235. C. Collins was also supported under the NSF grant DMS-2153254. Both authors would like to thank Inwon Kim for many helpful conversations, and Norbert Po\v{z}\'ar for helpful conversations and providing and modifying his code base to generate the images in \fref{mon-change-example}.

\section{Preliminaries}

We start with the basic assumptions which will be in force throughout the paper.

\begin{assumption}\label{assumptions}~
\begin{enumerate}[label = (\roman*)]
    \item $U\subsetneq \R^d$ is a $C^2$ open set with compact boundary.
    \item $F \in \mathrm{BV}([0, T]; H^{1/2}(\partial U)) \cap C([0, T]; H^{1/2}(\partial U) \cap C(\partial U))$.
    \item There exists $\varepsilon_0 > 0$ such that $F(x, t) \geq \varepsilon_0$ for all $x\in \partial U$ and $t\in [0, T]$.
    \item The initial data is stable, $u_0\in \mathcal{M}[u_0, F(0)]$ (otherwise, there will be a jump at time $0$).
\end{enumerate}
\end{assumption}

We record some standard facts for energy minimizers which are proved in \cite{FeldmanKimPozar}. Some elements of the proof of the below lemma require minor changes to handle $x$-dependence in the forcing. We detail these changes in Appendix \ref{a.energy-solution-properties}.

\begin{lemma}\label{lem:basic_estimates}
Suppose $u$ is an energy solution as in Definition \ref{def:energy_sln}. Then, we have the following estimates:
\begin{enumerate}[label = (\roman*)]
    \item (\cite{FeldmanKimPozar}, Lemma 5.7) \[\mathcal{D}[u(t)] \in \mathrm{BV}([0, T]; \R),\;\; \one_{\Omega(u(t))}\in \mathrm{BV}([0, T]; L^1(\R^d))\] Hence, $u$ is bounded in $H^1(U)$, uniformly in time.
    \item\label{lem:basic_lipschitz} (\cite{FeldmanKimPozar}, Lemma 5.5(i)) If $B_2 \subset U$, then 
    \[\|\nabla u(t)\|_{L^\infty(B_1)} \leq C(d)(\sqrt{1 + \mu_+} + \|\nabla u(t)\|_{L^2(B_2)})\]
    Hence, $u$ is uniformly Lipschitz near the free boundary, as long as the free boundary stays away from $\partial U$.
    \item (\cite{FeldmanKimPozar}, Lemma 5.5(ii), linear nondegeneracy) If $B_r(x) \subset U$ and $x\in \partial \{ u(t) > 0 \}$, then \[\sup_{B_r} u \geq c(d, \mu_-) r.\] 
    \item (\cite{FeldmanKimPozar}, Lemma 5.16) $u(t)$ has left and right limits at every time in the uniform metric, and $\Omega(u(t))$ has left and right limits at every time in the Hausdorff metric.
    \item (\cite{FeldmanKimPozar}, Lemma 5.16) At every time $u(t)\in \mathcal{M}[u(t-), F(t)]$, and $u(t+) \in \mathcal{M}[u(t), F(t)]$. Moreover, $u(t-)\wedge u(t+), u(t-)\vee u(t+)$ are harmonic on their respective positivity sets, hence $u(t-), u(t+)$ are ordered on each component of $\Omega(u(t-))\cup\Omega( u(t+))$.
\end{enumerate}
\end{lemma}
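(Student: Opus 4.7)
The lemma collects properties of energy solutions that were established in \cite{FeldmanKimPozar} for the case of $x$-independent Dirichlet forcing. My plan is to verify item-by-item that the proofs transfer to the present $x$-dependent setting, isolating the one place where the $x$-dependence genuinely enters, namely the boundary flux term in the energy dissipation inequality.

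For item (i), I would sum the energy dissipation inequality over an arbitrary partition $0 = s_0 < \dots < s_K = T$ to get $\sum_k \Diss[u(s_{k-1}), u(s_k)] \leq \mathcal{J}[u(0)] - \mathcal{J}[u(T)] + \int_0^T \int_{\partial U} 2F \partial_\nu u \, dS\, dt$, so the task is to bound the right-hand side independently of the partition. The energy terms are controlled by comparing $u(t)$ to a fixed harmonic extension of $F(t)$, which is bounded in $H^1(U)$ uniformly in $t$ by Assumption \ref{assumptions}(ii). The flux term is where $x$-dependence matters: since $F \in \mathrm{BV}([0,T]; H^{1/2}(\partial U))$, an integration by parts in time combined with trace estimates and the uniform $H^1$ bound on $u$ yields a finite bound, giving the BV control of both $\one_{\Omega(u(t))}$ in $L^1$ and of $\mathcal{D}[u(t)]$. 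The coercivity of $\Diss$ from $\mu_\pm > 0$ converts the summability of dissipation into the stated BV bound for the indicator.

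Items (ii) and (iii) are interior estimates on balls $B_2 \subset U$ disjoint from $\partial U$. Inside such a ball $u(t)$ minimizes the augmented functional $\mathcal{E}$ with coefficients depending only on the previous positivity set and its own trace on $\partial B_2$ as boundary data, so the $x$-dependence of the exterior forcing $F$ never enters. The Lipschitz estimate and the linear nondegeneracy proofs from \cite{FeldmanKimPozar} then apply verbatim; in both cases one tests minimality against the standard competitors (harmonic replacement for the upper Lipschitz bound, and truncations $(u - cr)_+$ against rescaled barriers for nondegeneracy).

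For items (iv) and (v), I would combine the BV control from (i) with the uniform interior Lipschitz bound and nondegeneracy from (ii) and (iii) to extract pointwise left and right limits, uniform in $x$ on compact subsets of $U$ and Hausdorff for the positivity sets. The stability statements $u(t) \in \mathcal{M}[u(t-), F(t)]$ and $u(t+) \in \mathcal{M}[u(t), F(t)]$, together with the harmonicity of $u(t-) \wedge u(t+)$ and $u(t-) \vee u(t+)$ on their respective positivity sets, follow by passing to the limit $s \to t\pm$ in the global stability condition for $u(s)$, using continuity of $F(\cdot, s) \to F(\cdot, t)$ in both $H^{1/2}$ and $C$ from Assumption \ref{assumptions}(ii). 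Once the meet and join are harmonic, the ordering of $u(t-)$ and $u(t+)$ on each component of $\Omega(u(t-)) \cup \Omega(u(t+))$ follows from the strong maximum principle. The main obstacle throughout is the flux estimate in (i), which is precisely what the appendix referenced in the paper would need to work out in detail for $x$-dependent $F$.
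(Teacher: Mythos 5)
Your plan is correct in outline, and for items (ii)--(v) it matches what the paper does, which is simply to cite \cite{FeldmanKimPozar} after observing that those arguments are local interior estimates (for (ii), (iii)) or compactness consequences (for (iv), (v)) that do not see the $x$-dependence of the exterior Dirichlet datum. For item (i), however, your route is genuinely different from the one the paper takes in Appendix \ref{a.energy-solution-properties}. You propose to first establish the uniform-in-time $H^1$ bound from global stability (this is \lref{domain_size_estimate}(i)), observe that the free boundary is uniformly separated from $\partial U$ (\lref{domain_size_estimate}(ii)) so that $u$ is harmonic in a fixed collar and $\|\partial_\nu u(t)\|_{H^{-1/2}(\partial U)}$ is uniformly controlled, and then bound the work term $\int_0^T\langle 2\dot F(t),\partial_\nu u(t)\rangle\,dt$ by a direct $H^{1/2}$--$H^{-1/2}$ pairing against the $\mathrm{BV}_t H^{1/2}_x$ bound on $F$; summing the dissipation inequality then gives the $\mathrm{BV}$ control of $\one_{\Omega(u(t))}$ and, via the identity $\mathcal D = \mathcal J - |\Omega|$ and the observation that the positive increments of $\mathcal J$ are dominated by $\int|Q|$, also of $\mathcal D$. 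The paper instead proves \lref{set-time-BV-reg} by a Gr\"onwall argument: using positivity of $F\partial_\nu u$ on $\partial U$ and the identity $\mathcal D(u(t))=\int_{\partial U}F\partial_\nu u\,dS$ to bound $Q(t)\leq 2\|[\tfrac{d}{dt}\log F]_+\|_{L^\infty(\partial U)}\mathcal D(t)$, and closes the loop with $\mathcal D(t)\leq\mathcal J(u_0)G(t)^2$; the BV estimate for $\mathcal D$ is then obtained separately in \lref{energy-time-BV-reg} by a corrector/competitor argument that produces the sharper pointwise inequality $|\mathcal D(u(t_1))-\mathcal D(u(t_0))|\leq C\|F(t_1)-F(t_0)\|_{H^{1/2}}+(1+\mu_+\vee\mu_-)|\Omega(u(t_1))\Delta\Omega(u(t_0))|$. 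Your approach is arguably more elementary and appeals more directly to the standing assumption $F\in\mathrm{BV}([0,T];H^{1/2}(\partial U))$, whereas the paper's Gr\"onwall route requires $L^1_t L^\infty_x$ control of $\dot F/F$ but in exchange yields explicit exponential bounds and the refined modulus in \lref{energy-time-BV-reg} that the paper reuses in the proof of \tref{mm-energy}.

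One small inaccuracy worth flagging: in item (v), the intermediate stability $u(t)\in\mathcal M[u(t-),F(t)]$ is not obtained ``by passing to the limit in the global stability condition'' alone. Global stability of $u(t)$ only gives $\mathcal J[u(t)]\leq\mathcal J[v]+\Diss[u(t),v]$; combined with the dissipation triangle inequality this goes the wrong way. What is actually needed is to pass to the limit $s\uparrow t$ in the energy dissipation inequality to get $\mathcal J[u(t-)]-\mathcal J[u(t)]\geq\Diss[u(t-),u(t)]$, and then combine this with global stability of $u(t-)$. Both ingredients are available, so the conclusion stands, but the mechanism is not quite the one you describe. A similar care is needed for the harmonicity of $u(t-)\wedge u(t+)$ and $u(t-)\vee u(t+)$: these are consequences of the combined global stability plus dissipation balance at the jump, and the ordering-per-component argument requires the strong maximum principle applied to $v-u(t\pm)$ on the correct positivity sets rather than on all of the component of $\Omega(u(t-))\cup\Omega(u(t+))$.
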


We need a few more results to show that the energy solutions avoid various kinds of pathological behavior; the energy cannot blow up, the profile cannot collapse to the fixed boundary, and the profile cannot extend infinitely. To our knowledge, these results have not previously been written.

\begin{lemma}\label{lem:domain_size_estimate}
Let $u$ be a globally stable profile with forcing $F\in H^{1/2}(\partial U)$, satisfying $F \geq \varepsilon > 0$. We have the following:
\begin{enumerate}[label = (\roman*)]
    \item $\mathcal{J}[u] \leq C(d, \partial U)(\|F\|_{H^{1/2}(\partial U)}^2 + 1)$
    \item There exists $\delta = \delta(d, \partial U, \varepsilon) > 0$ such that $\mathrm{dist}(\partial \Omega(u), \partial U) > \delta$.
    \item There exists $R = R(d, \partial U, \|F\|_{H^{1/2}(\partial U)}) < \infty$ such that $\mathrm{diam}(\Omega(u)) < R$.
\end{enumerate}
\end{lemma}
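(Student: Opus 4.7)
For (i), the natural move is to test global stability against an explicit, compactly supported competitor.  Choose an $H^1$-extension of $F$, multiply by a smooth cutoff supported in a fixed tubular neighborhood $N$ of $\partial U$, and call the result $\Phi$.  Then $\Phi|_{\partial U}=F$, $\mathcal{D}[\Phi]\le C\|F\|_{H^{1/2}(\partial U)}^2$, and $|\Omega(\Phi)|\le|N|\le C_{\partial U}$.  Global stability with $v=\Phi$ yields
\[\mathcal{D}[u]+|\Omega(u)|\le\mathcal{D}[\Phi]+|\Omega(\Phi)|+\mu_+|\Omega(\Phi)\setminus\Omega(u)|+\mu_-|\Omega(u)\setminus\Omega(\Phi)|.\]
Since $\mu_-<1$, the $\mu_-|\Omega(u)|$ term on the right is absorbed on the left to produce (i).

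For (ii), argue by contradiction.  Suppose $y_0\in\partial\Omega(u)$ with $\delta:=\mathrm{dist}(y_0,\partial U)$ arbitrarily small compared to $\varepsilon_0$, and set $x_0=\pi_{\partial U}(y_0)$, $B=B_{3\delta}(x_0)\cap U$, $B'=B_{3\delta/2}(x_0)\cap U$.  Let $h$ be the harmonic extension of $u|_{\partial B}$ to $B$, and take $v=h$ on $B$, $v=u$ off $B$ as competitor.  Because $h=F\ge\varepsilon_0$ on $\partial U\cap B$ and $U$ is $C^2$ at $x_0$, the half-space Poisson barrier gives $h\ge c\varepsilon_0$ on $B'$; in particular $v>0$ throughout $B$.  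The Dirichlet identity $\int_B(|\nabla u|^2-|\nabla h|^2)=\int_B|\nabla(u-h)|^2$ collapses global stability to
\[\int_B|\nabla(u-h)|^2\le(1+\mu_+)\,|\{u=0\}\cap B|.\]
Poincar\'e on $B$ (valid since $u-h=0$ on $\partial B$), combined with $(u-h)^2=h^2\ge c^2\varepsilon_0^2$ on $\{u=0\}\cap B'$, then gives
\[c^2\varepsilon_0^2\,|\{u=0\}\cap B'|\le C\delta^2(1+\mu_+)\,|\{u=0\}\cap B|.\]
The remaining ingredient is the classical Alt--Caffarelli density estimate $|\{u=0\}\cap B_{\delta/2}(y_0)|\ge c\delta^d$ at the free boundary point $y_0$; since $B_{\delta/2}(y_0)\subset B'$, this yields $|\{u=0\}\cap B'|\ge c\delta^d$.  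Together with $|\{u=0\}\cap B|\le|B|\le C\delta^d$, we are forced into $\varepsilon_0^2\le C\delta^2(1+\mu_+)$, contradicting $\delta\ll\varepsilon_0$ and establishing (ii).

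For (iii), convert ``far from $\partial U$" into volume.  Let $y\in\overline{\Omega(u)}$ with $M:=\mathrm{dist}(y,\partial U)$ large.  Either $B_{M/2}(y)\subset\Omega(u)$ (so $|\Omega(u)|\ge cM^d$ directly), or the nearest free boundary point $z$ to $y$ lies in $B_{M/2}(y)$ and therefore has $\mathrm{dist}(z,\partial U)\ge M/2$.  Nondegeneracy, Lemma~\ref{lem:basic_estimates}(iii), applied at $z$ at scale $M/4$ produces a point with $u\ge cM/4$; the interior Lipschitz estimate, Lemma~\ref{lem:basic_estimates}(ii), uniform at unit scales once (i) bounds $\mathcal{D}[u]$, spreads this into a ball of radius of order $M$ on which $u>0$.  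In either case $|\Omega(u)|\ge cM^d$, and comparison with $|\Omega(u)|\le\mathcal{J}[u]\le C$ from (i) forces $M\le M_0$.  Hence $\Omega(u)$ sits inside the $M_0$-tubular neighborhood of $\partial U$, whose diameter is at most $\mathrm{diam}(\partial U)+2M_0$.

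The main obstacle is the positive density estimate for $\{u=0\}$ used in (ii).  It is classical for Alt--Caffarelli minimizers with constant coefficient and should transfer here because $\mathcal{E}[\Omega(u),\cdot]$ is a one-phase functional with coefficient $Q(x)\in[1-\mu_-,1+\mu_+]$ uniformly bounded away from $0$ and $\infty$; the standard comparison-with-fill-in argument should adapt with only cosmetic bookkeeping of $Q$.  The remaining steps reduce to routine geometric computation once the above setup is in place.
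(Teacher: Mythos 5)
Your part (i) matches the paper: test global stability against an $H^1$-extension of $F$ supported in a fixed collar and absorb $\mu_-|\Omega(u)|$ using $\mu_-<1$. For parts (ii) and (iii) your arguments are correct but genuinely different from the paper's. For (ii), the paper builds a radial subharmonic barrier $\Phi$ on a touching annulus with $|\nabla\Phi|^2>1+\mu_+$ and applies the fill-in inequality of \cite{FeldmanKimPozar}*{Lemma A.2} to $u\vee\Phi$, concluding directly that $|\{u=0,\Phi>0\}|=0$; you instead do a local harmonic replacement in $B_{3\delta}(x_0)\cap U$, combine the Dirichlet identity, global stability, Poincar\'e, and a boundary Harnack lower bound on $h$, and then invoke the Alt--Caffarelli density estimate $|\{u=0\}\cap B_{\delta/2}(y_0)|\ge c\delta^d$. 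Both strategies work, and both in fact need the zero-set density estimate to pass from ``$u>0$ a.e.\ near $\partial U$'' (or its quantitative cousin in your version) to ``$\partial\Omega(u)$ avoids a neighborhood of $\partial U$''; you flag this honestly, and the estimate is classical (Alt--Caffarelli, Lemma~3.7, or Velichkov's book) for the one-phase functional with bounded measurable $Q\in[1-\mu_-,1+\mu_+]$, so it is a citation rather than a gap. For (iii), the paper covers $\Omega(u)$ by $r$-balls with bounded overlap, uses $|B_r(x)\cap\Omega(u)|\ge cr^d$, and appeals to a competitor argument showing each component of a stable profile touches $\partial U$; you instead take the farthest point $y\in\overline{\Omega(u)}$ from $\partial U$ and, via nondegeneracy at a nearby free boundary point plus the uniform unit-scale Lipschitz bound, spread a value $\gtrsim M$ into a ball of radius $\sim M$ to force $|\Omega(u)|\gtrsim M^d$, contradicting (i). This avoids the component argument. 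One small care point: when spreading via Lemma~\ref{lem:basic_estimates}\,(ii) you should note that the radius-$\sim M$ ball of positivity stays at distance $\gtrsim M$ from $\partial U$ (since $\mathrm{dist}(x_1,\partial U)\ge M/4$), so the unit-scale Lipschitz estimate is indeed available throughout; and the resulting diameter bound via the $M_0$-collar of the compact $\partial U$ is fine.
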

\begin{proof}
To see the upper bound on $\mathcal{J}[u]$, note that for any alternative profile $u'\in H^1_0(U) + F$, global stability gives 
\[\mathcal{J}[u] \leq \mathcal{J}[u'] + \Diss[u, u'] \leq \mathcal{J}[u'] + \mu_-|\Omega(u)| + \mu_+|\Omega(u')| \]
Then absorbing the measure terms into each $\mathcal{J}$ results in
\[ \mathcal{J}[u] \leq \frac{1 + \mu_+}{1 - \mu_-}\mathcal{J}[u'] \]
We can then take $u'$ to be the solution to the Dirichlet problem on a fixed subset of $U$ so that the Dirichlet energy of $u'$ is proportional to $\|F\|_{H^{1/2}(\partial U)}^2$ based on the chosen subset, while the volume term of the energy is independent of $F$.

For the second part, we use \cite{FeldmanKimPozar}*{Lemma A.2} to estimate the energy difference associated to filling in holes near the fixed boundary. The formula gives that if $v_0 \leq v_1$ on $U$, the two functions have the same trace on $\partial U$, and $v_1$ is subharmonic, then
\begin{equation}\label{eq:energy_difference_quotient}
    \mathcal{D}[v_0] - \mathcal{D}[v_1] \geq \int_{\Omega(v_1)\setminus \Omega(v_0)} |\nabla v_1|^2
\end{equation}
In particular, we let $v_0 = u$, and $v_1$ be the pointwise maximum of $u$ with an appropriately chosen barrier $\Phi$. So long as $\Phi$ is subharmonic and we have $|\nabla \Phi|^2 > 1 + \mu_+$, \eqref{eq:energy_difference_quotient} contradicts global stability of $u$ wherever $u = 0$ and $\Phi > 0$.

We build $\Phi$ as a fundamental solution on an annulus $B_{r_2}\setminus B_{r_1}$, where $B_{r_1}\subset U^c$. We take $\Phi = \varepsilon/2$ on $\partial B_{r_1}$ and $\Phi = 0$ on $\partial B_{r_2}$, so that $u\vee \Phi = u$ on $\partial U$ and 
\[|\nabla \Phi|^2 \geq  C(d)\varepsilon^2\left(1 - \left(r_1/r_2\right)^{d-1}\right)^{-1}.\] When $r_1/r_2$ is sufficiently close to 1, we get the estimate needed for \eqref{eq:energy_difference_quotient}.  Since $U$ is outer regular, each $x_0 \in \partial U$ has an interior touching ball of radius $r_1>0$ and, choosing $r_2$ a constant multiple of $r_1$ based on the previous argument, shows that $u>0$ in a neighborhood $B_{r_2-r_1}(x_0) \cap U$.

Turning to the last part, we observe that the Lipschitz estimate and linear nondegeneracy estimate at the free boundary from Lemma \ref{lem:basic_estimates} imply there exist constants $c, r_0 > 0$ such that $|B_r(x) \cap \Omega(u)| \geq c r^d$ for $r < r_0$ and $x\in \partial \Omega(u)$. Then, let us choose $r$ in this range and cover $\partial \Omega(u)$ and $\{ x \in \Omega(u) : \mathrm{dist}(x, \partial\Omega(u) \geq r\}$ by balls of radius $r$. We can choose the collection of balls such that only $C(d)$ balls intersect at any given point. Ignoring technicalities near $\partial U$, each ball in the covering intersects $\Omega(u)$ in at least $cr^d$ measure, so the total number of balls is at most $\frac{C(d)|\Omega(u)|}{c r^d}$. A competitor argument yields that all connected components of a stable profile border at least one component of $\partial U$, so it follows that $\mathrm{diam}(\Omega(u)) \leq \frac{C(d)|\Omega(u)|}{c r^{d-1}} + \mathrm{diam}(\partial U)$. We recall that $|\Omega(u)|, c, r$ are all controlled by $\mathcal{J}[u]$, which we have already bounded, so we conclude.

\end{proof}

\subsection{Regularity results for Bernoulli obstacle problems}\label{s.bernoulli-obs-reg} In this section we collect and slightly refine some results from the literature on the regularity of Bernoulli obstacle problems. Then we show how to apply these results in our context using the uniqueness theorem \tref{main}. For further results and references on the regularity theory beyond this brief discussion, we refer to the book of \cite{VelichkovBook}.

The regularity theory of the Bernoulli obstacle problem is developed through linear blowup at the free boundary; that is, studying subsequential limits of $v_r(y) = r^{-1} u(x_0 + ry)$ as $r\to 0$, for $x_0 \in \partial \Omega(u)$. The Lipschitz estimate of Lemma \ref{lem:basic_estimates} gives the compactness of the blowup sequence, while the nondegeneracy estimate ensures that subsequential limits are nontrivial and minimize $\mathcal{J}$ with respect to compact perturbations. Furthermore, the Weiss monotonicity formula can be used to show that all subsequential limits are (positively) 1-homogeneous, see \cite{FerreriVelichkov}*{Section 8}.

Free boundary points are classified into regular or singular points based on their blowup limits. Regular points have blowup limits of the form $\alpha(y\cdot \nu)_+$, called half-planes; in this case, ``flat implies $C^{1,\beta}$" estimates can be used to show regularity of the free boundary in a neighborhood of the regular point. In particular, regular points are relatively open in the free boundary, and the $\nu$ in the blowup is the inward normal vector to the free boundary. A point with any other type of blowup limit is a singular point, and the free boundary cannot be $C^1$ at such a point. In dimension $d\leq 4$, it has been shown in \cites{AltCaffarelli,CaffarelliJerisonKenig,JerisonSavin} that singular points do not occur. In dimension $d = 7$, \cite{DeSilvaJerison} shows the existence of singular points. The smallest dimension $d_{\text{sing}}$ in which singular points exist is presently unknown, so we may only say that $d_{\text{sing}} \in \{ 5, 6, 7 \}$. In general, for $d\geq d_{\text{sing}}$ the set of singular points in the free boundary is at most a closed set of Hausdorff dimension $d - d_{\text{sing}}$, see \cite{Weiss}.

\begin{theorem}[Chang-Lara and Savin \cite{ChS}, Ferreri and Velichkov \cite{FerreriVelichkov}]\label{t.regularity_estimate}
Let $u\in \mathcal{M}[u_0, F]$ and $ \beta \in (0,\frac{1}{2})$.  Suppose that $\Omega_0 := \Omega(u_0)$ is locally $C^{1,\beta}$ away from a closed set $K_0 \subset \partial \Omega_0$, and if $u \geq u_0$ or $u \leq u_0$. Then $\Omega(u)$ is locally $C^{1,\beta}$ away from $K_0\cup K_1$, where $K_1$ is a closed set of dimension $d - d_{\textup{sing}}$ which is disjoint from $\partial \Omega_0$. In particular $K_1 = \emptyset$ for $d \leq 4$.
\end{theorem}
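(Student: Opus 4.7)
The plan is that this theorem is essentially a bookkeeping exercise assembling existing regularity results, and the work lies in verifying that the hypotheses of those results match our setting. I would split $\partial \Omega(u)$ into the portion at positive distance from $\partial \Omega_0$ and the portion lying near $\partial \Omega_0 \setminus K_0$, handling each with a distinct body of literature.

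Without loss of generality assume $u \geq u_0$, so that $\Omega_0 \subseteq \Omega(u)$ (the opposite case is symmetric, with the role of $\mu_-$ and $\mu_+$ interchanged). At a free boundary point $x_0 \in \partial\Omega(u)$ with $\dist(x_0, \partial \Omega_0) > 0$, the augmented energy $\mathcal{E}[u_0,\cdot]$ reduces in a neighborhood of $x_0$ to the constant-coefficient integrand
\[ \int |\nabla v|^2 + (1 + \mu_+)\one_{\{v > 0\}}, \]
so after rescaling $u$ is a local minimizer of the standard Alt--Caffarelli one-phase functional. By the classical theory together with the De Silva--Jerison flat-implies-$C^{1,\beta}$ estimate and the Jerison--Savin classification, the free boundary is $C^{1,\beta}$ (indeed real analytic) near regular points. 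The singular set $K_1$ is closed, has Hausdorff dimension at most $d - d_{\textup{sing}}$ by the stratification of \cite{Weiss}, and is empty for $d \leq 4$ by \cites{AltCaffarelli,CaffarelliJerisonKenig,JerisonSavin}. By construction this $K_1$ is separated from $\partial \Omega_0$.

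Near a point $x_0 \in \partial \Omega_0 \setminus K_0$, the hypothesis says that $\partial \Omega_0$ is locally $C^{1,\beta}$, and the monotonicity $\Omega_0 \subseteq \Omega(u)$ places us exactly in the one-sided regime of \cites{ChS,FerreriVelichkov}, which analyze the Bernoulli one-phase problem with a jump discontinuity in the volume coefficient across a $C^{1,\beta}$ interface. Their results assert that every blowup at such a boundary point is either a half-plane or a two-plane of thin-obstacle type, that no genuinely singular blowups appear at the jump interface, and that $\partial \Omega(u)$ is locally $C^{1,\beta}$ in a neighborhood of $x_0$ (up to the endpoint $\beta = 1/2$). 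Combining the two regions gives the decomposition of the free boundary as $C^{1,\beta}$ away from $K_0 \cup K_1$, with $K_1$ closed and disjoint from $\partial \Omega_0$.

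The main obstacle is lining up hypotheses: the Chang-Lara--Savin and Ferreri--Velichkov results are formulated for local minimizers under compact perturbations, whereas our $u$ is a global minimizer of $\mathcal{E}[u_0,\cdot]$ with fixed Dirichlet data $F$. One must note that global minimality, combined with the Lipschitz estimate and linear nondegeneracy of \lref{basic_estimates}, implies local minimality with respect to compact perturbations supported in $U$, which is exactly the input those theorems need. The remaining details are tracking the sign of the coefficient jump (which side of $\partial \Omega_0$ has coefficient $1 - \mu_-$ versus $1 + \mu_+$) depending on whether $u \geq u_0$ or $u \leq u_0$, and observing that because the Chang-Lara--Savin/Ferreri--Velichkov analysis rules out singular behavior in a neighborhood of $\partial \Omega_0 \setminus K_0$, the singular set $K_1$ produced by the Alt--Caffarelli analysis automatically stays at positive distance from $\partial \Omega_0$, as asserted.
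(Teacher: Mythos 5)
Your decomposition of $\partial\Omega(u)$ into the far-from-$\partial\Omega_0$ piece (handled by classical Alt--Caffarelli theory) and the near-$\partial\Omega_0$ piece (handled by \cites{ChS,FerreriVelichkov}) matches the paper's framing, and the far-field analysis, the dimension bound on $K_1$, and the remark on local versus global minimality are all fine. The gap lies in how you treat the near-field. You assert that \cites{ChS,FerreriVelichkov} ``assert that every blowup at such a boundary point is either a half-plane or a two-plane of thin-obstacle type, that no genuinely singular blowups appear at the jump interface, and that $\partial\Omega(u)$ is locally $C^{1,\beta}$.'' The paper does not take this as given; rather, it explicitly identifies the verification of half-plane blowups at points of $\partial\Omega_0\setminus K_0$ as \emph{the} content to be proved, with the flat-implies-$C^{1,\beta}$ results of \cites{ChS,FerreriVelichkov} being the input that converts a flatness hypothesis into regularity. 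In particular, the paper records that the case $u\leq u_0$ can be referred to the discussion in \cite{ChS}*{Section 2.2}, but that the case $u\geq u_0$ ``is slightly more complicated'' and does \emph{not} reduce to a citation.

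Concretely, for $u\geq u_0$ the paper proves the half-plane blowup by: (a) identifying the blowup $u_{x_0}$ as a $1$-homogeneous minimizer of $\mathcal{E}[\{y\cdot n_{x_0}>0\},\cdot]$ with $u_{x_0}\geq s(y\cdot n_{x_0})_+$; (b) applying \cite{CaffarelliSalsa}*{Lemma 11.17} to obtain linearity $\alpha(y\cdot n_{x_0})_+$ in the interior half-space; and (c) ruling out a nontrivial piece in the exterior half-space by blowing up once more to get a two-plane $\alpha(y\cdot n_{x_0})_+ + \beta(y\cdot n_{x_0})_-$ and observing that harmonic replacement would then strictly decrease the energy unless $\beta=0$. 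Your proposal omits all of steps (a)--(c); moreover, the two-plane alternative that you allow in your list of possible blowups is precisely what step (c) must exclude, since a two-plane blowup is incompatible with $x_0\in\partial\Omega(u)$ and must be ruled out before the flat-implies-$C^{1,\beta}$ machinery can be invoked. Finally, the claim that $K_1$ is disjoint from $\partial\Omega_0$ is itself a consequence of the half-plane blowup at every point of $\partial\Omega_0\setminus K_0$, not a separate observation, so that claim too is left unsupported until the blowup classification is actually carried out.
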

\begin{proof}
We start by explaining how most of this statement follows from known results in the literature.  The main point which we will need to clearly justify, is that the ``new" singular set $K_1$ is disjoint from $\partial \Omega_0$.
    
     By the flat implies $C^{1,\beta}$ results of \cites{ChS,FerreriVelichkov} it suffices to show that $u$ has a half-plane blow-up at every $x_0 \in \partial \Omega_0 \setminus K_0$.  The idea is to apply \cite{CaffarelliSalsa}*{Lemma 11.17}, which shows that positive harmonic functions have a non-tangential derivative at inner and/or outer regular points of the boundary of their positivity set. For the case $u \leq u_0$ see the comments at the beginning \cite{ChS}*{Section 2.2}. For the case $u \geq u_0$ the argument is slightly more complicated. Following the approach to blowup analysis described above, up to a subsequence,
    \[\lim_{r \to 0} \frac{u(x_0+ry)}{r} \to u_{x_0}(y) \ \hbox{ locally uniformly.}\]
    where the blowup limit $u_{x_0}$ is 1-homogeneous. We also know $u_0$ is $C^{1,\beta}$ at $x_0 \in \partial \Omega_0 \setminus K_0$ and so, calling $\grad u_0(x_0) = s n_{x_0}$ where $s$ is the nonzero slope and $n_{x_0}$ is the inward normal to $\Omega_0$ at $x_0$,
    \[\lim_{r \to 0} \frac{u_0(x_0+ry)}{r} \to s(n_{x_0} \cdot y)_+\ \hbox{ locally uniformly.}\]
    Here $u_{x_0} \geq s(n_{x_0} \cdot y)_+$ will also minimize $\mathcal{E}[\{n_{x_0} \cdot y >0\}, \cdot]$ with respect to compact perturbations, since non-degeneracy can be used to show the $L^1_{loc}$ convergence of the indicator functions of the positivity sets. 
    
    Next we show that $u_{x_0}$ is linear, first in the interior half-space $\{y \cdot n_{x_0}>0\}$ and then in the (complementary) exterior half-space. By \cite{CaffarelliSalsa}*{Lemma 11.17}
    \[\lim_{r \to 0} \frac{u(x_0+ry)}{r} \to \alpha(n_{x_0} \cdot x)_+ \ \hbox{ locally uniformly in } \ x \cdot n_{x_0} \geq 0\]
    where $n_{x_0}$ is still the inward normal to $\Omega_0$ at $x_0$ and $\alpha^2 \leq 1 + \mu_+$.  This shows linearity in the interior half-space, but we do not have control in the exterior half-space yet.
    
    Suppose that $\Omega_- :=\{u_{x_0} >0\} \cap \{x \cdot n_{x_0} < 0\}$ is nontrivial then, by homogeneity, $0 \in \partial \Omega_-$.  But now $0$ is an outer regular point for $\Omega_-$, so we can apply \cite{CaffarelliSalsa}*{Lemma 11.17} to the blow-up of $u_{x_0}|_{\Omega_-}$ to find that
    \[\lim_{r \to 0} \frac{u_{x_0}(rz)}{r} \to \beta(n_{x_0} \cdot x)_- \ \hbox{ uniformly in } \ x \cdot n_{x_0} \leq 0\]
    for some $ 0 \leq \beta \leq 1+\mu_+$.  But then by homogeneity $u_{x_0} \equiv \alpha (x \cdot n_{x_0})_+ + \beta (x \cdot n_{x_0})_-$. This contradicts $\mathcal{E}[\{n_{x_0} \cdot y >0\}, \cdot]$ minimality unless $\beta =0$, since, when $\beta >0$, harmonic replacement decreases the Dirichlet energy without increasing the measure of the positivity set.
\end{proof}

The statement of \tref{regularity_estimate} is not quantitative so we also include a corollary explaining that the $C^{1,\beta}$ interior estimate can be quantified below $d_{sing}$.
\begin{corollary}\label{c.quantitative_regularity_estimate}
    Suppose that $d < d_{sing}$ and $u$, $u_0$ are as in the statement of \tref{regularity_estimate} in the domain $B_1$, with $K_0 = \emptyset$. Then 
    \[\|u\|_{C^{1,\beta}(\Omega(u) \cap B_{1/2})} \leq C(d,\beta,\|u_0\|_{C^{1,\beta}(\Omega(u_0) \cap B_1)},\|u\|_{L^\infty(\Omega(u) \cap B_1)}).\]
\end{corollary}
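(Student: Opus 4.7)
My plan is a standard compactness-contradiction argument upgrading the qualitative regularity of \tref{regularity_estimate} to a uniform estimate. Suppose the claim fails: there exist sequences $(u_n, u_{0,n})$ satisfying the hypotheses of the corollary with $\|u_{0,n}\|_{C^{1,\beta}(\Omega(u_{0,n}) \cap B_1)}$ and $\|u_n\|_{L^\infty(\Omega(u_n) \cap B_1)}$ uniformly bounded, the appropriate ordering, but $\|u_n\|_{C^{1,\beta}(\Omega(u_n) \cap B_{1/2})} \to \infty$.

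By the interior Lipschitz estimate \lref{basic_estimates}(ii) and Arzel\`a--Ascoli, I pass to a subsequence with $u_n \to u_\infty$ uniformly on compact subsets of $B_1$ and $u_{0,n} \to u_{0,\infty}$ in $C^{1,\beta'}$ for each $\beta' < \beta$, with $u_{0,\infty} \in C^{1,\beta}$ by lower semicontinuity of the seminorm. Linear nondegeneracy \lref{basic_estimates}(iii) promotes this to Hausdorff convergence $\partial \Omega(u_n) \to \partial \Omega(u_\infty)$ and $L^1_{\mathrm{loc}}$ convergence of positivity indicators, and the usual stability-of-minimizers arguments show that $u_\infty$ is a minimizer of $\mathcal{E}[\Omega(u_{0,\infty}), \cdot]$ on $B_1$ (with respect to compact perturbations) that retains the same ordering against $u_{0,\infty}$. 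Applying \tref{regularity_estimate} to $(u_\infty, u_{0,\infty})$, since $K_0 = \emptyset$ and $d < d_{sing}$ force $K_1 = \emptyset$, the set $\Omega(u_\infty)$ is locally $C^{1,\beta}$ throughout $B_{3/4}$.

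The main obstacle will be transferring the resulting qualitative bound on $u_\infty$ into a uniform quantitative bound on $u_n$, since the qualitative theorem provides no a priori scale at which regularity kicks in. I would address this via perturbative stability of the ``flat implies $C^{1,\beta}$'' steps underlying \tref{regularity_estimate}: the arguments of Alt--Caffarelli (at interior free boundary points) and of \cites{ChS,FerreriVelichkov} (at free boundary points on $\partial \Omega_0$) both proceed by iterating a one-step flatness improvement, which is stable under $C^0$-perturbation of the solution and $C^{1,\beta}$-perturbation of the jump manifold. Qualitative regularity of $u_\infty$ together with a finite covering of the compact set $\overline{\Omega(u_\infty)} \cap \overline{B_{1/2}}$ supplies a single scale $\rho$ at which $u_\infty$ is $\tfrac{\varepsilon_0}{2}$-flat at every free boundary point in $\overline{B_{1/2}}$, where $\varepsilon_0$ is the universal threshold needed to trigger the iteration. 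Uniform convergence of $u_n, u_{0,n}$ and Hausdorff convergence of the free boundaries then force $u_n$ to be $\varepsilon_0$-flat at scale $\tfrac{\rho}{2}$ at every free boundary point in $\overline{B_{1/2}}$ for all large $n$. Running the iteration for $u_n$ yields a uniform $C^{1,\beta}$ bound on $u_n$ near the free boundary, which combined with classical interior estimates for harmonic functions away from the free boundary contradicts the divergence assumption.

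If one prefers to avoid opening up the proofs of the cited regularity theorems, an equivalent route is a direct second blow-up: choose $x_n, y_n \in \overline{\Omega(u_n)} \cap \overline{B_{1/2}}$ nearly realizing the diverging seminorm, note that the uniform Lipschitz bound forces $r_n := |x_n - y_n| \to 0$, and rescale $v_n(z) := u_n(x_n + r_n z)/r_n$, $v_{0,n}(z) := u_{0,n}(x_n + r_n z)/r_n$. After translating so that the rescaled reference configurations are centered and appealing again to the compactness-plus-\tref{regularity_estimate} machinery, the limit $v_\infty$ is either an entire harmonic function, an Alt--Caffarelli minimizer on all of $\R^d$ or on a half-space, or a jump minimizer whose jump manifold has flattened to a hyperplane (the latter because the rescaled $v_{0,n}$ has vanishing $C^{1,\beta}$ seminorm in any fixed ball), all of which are $C^{1,\beta}$ near the origin and contradict the divergence of the $C^{1,\beta}$ seminorm of $v_n$ at a fixed unit scale.
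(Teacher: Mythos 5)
Your primary route is essentially the paper's own argument: the paper likewise proves this by a compactness-contradiction on the initial flatness scale (modeled on \cite{FeldmanKimPozar}*{Lemma 3.9}), passing to a limit minimizer and invoking \tref{regularity_estimate} to conclude every free boundary point of the limit has a half-planar blow-up, which furnishes a uniform flatness scale and closes the contradiction. Your second-blow-up alternative is also sound but goes beyond the paper's one-paragraph sketch; both carry the same content.
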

This can be proved by a compactness-contradiction argument for the initial flatness scale similar to \cite{FeldmanKimPozar}*{Lemma 3.9}.  If the first $\delta_0$-flat scale approached zero then, in the limit, we could find a minimizer which did not have a half-planar blow-up at $0$ contradicting \tref{regularity_estimate}.

We can now prove Corollary \ref{c.low_dim_regularity}, assuming the result of Theorem \ref{t.main}.

\begin{proof}[Proof of Corollary \ref{c.low_dim_regularity}]
    Any minimizing movements solution $u(t)$ satisfying the hypotheses of Theorem \ref{t.main} can be characterized by $u(t)\in \mathcal{M}[u(t_{i}), F(t)]$, where $t_i$ is the last monotonicity change before time $t$. Moreover, $u(t)$ is monotone on each $[t_i, t_{i+1}]$, so, for $d \leq d_{\text{sing}}$, we can apply the regularity estimate of \cref{quantitative_regularity_estimate}  $i+1$ times to reach time $t$ from time $0$, with global control over the $C^{1,\beta}$ regularity of $\Omega(u(t))$. Then, since $F(t)\in C^{1,\beta}(\partial U)$ uniformly in time, Schauder estimates imply the uniform in time regularity of $u(t)\in C^{1,\beta}(\overline{\Omega(u(t))})$.
\end{proof}

In higher dimensions we have the following regularity result with possible singular set.  

\begin{corollary}
    For any $ 0 < \beta < \frac{1}{2}$, if $F\in L^\infty([0, T]; C^{1,\beta}(\partial U))$, monotone, and $\Omega_0$ is $C^{1,\beta}$, except for a closed singular set $K_0 \subset \partial \Omega_0$. Then for each $t\in [0,T]$ there is a compact set $K(t)$ of dimension at most $d-d_{sing}$, disjoint from $\partial \Omega_0$, so that $u(t)\in C^{1,\beta}(\overline{\Omega(u(t))} \setminus (K_0 \cup K(t)))$.
\end{corollary}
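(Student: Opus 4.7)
The plan is to adapt the argument used for \cref{low_dim_regularity}, while accommodating the possible appearance of singular points in dimensions $d \geq d_{\textup{sing}}$. Since $F$ is monotone on $[0,T]$, \tref{main} applies with a single monotonicity interval and gives that every minimizing movements solution satisfies $u(t) \in \mathcal{M}[u_0, F(t)]$ for each $t \in [0,T]$. Moreover, by the monotonicity inherited from monotone forcing, either $u(t) \geq u_0$ or $u(t) \leq u_0$ pointwise, so the hypotheses of \tref{regularity_estimate} are satisfied with base profile $u_0$ and base singular set $K_0$.

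Next, I would apply \tref{regularity_estimate} directly. This yields a closed set $K(t) := K_1$ of Hausdorff dimension at most $d - d_{\textup{sing}}$, disjoint from $\partial \Omega_0$, such that $\partial \Omega(u(t))$ is locally $C^{1,\beta}$ away from $K_0 \cup K(t)$. By part (ii) of \lref{basic_estimates} combined with \lref{domain_size_estimate}, the free boundary $\partial \Omega(u(t))$ stays at a uniform positive distance from $\partial U$ and $\Omega(u(t))$ has uniformly bounded diameter, so $K_0 \cup K(t)$ is a compact subset of the interior of $U$.

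To upgrade the free boundary regularity to $C^{1,\beta}$ regularity of $u(t)$ itself, I would invoke standard boundary Schauder estimates. The function $u(t)$ is harmonic on $\Omega(u(t))$; its boundary consists of the portion $\partial U$, on which $u(t) = F(t) \in C^{1,\beta}(\partial U)$, and the portion $\partial \Omega(u(t))$, on which $u(t) = 0$ and which is $C^{1,\beta}$ away from $K_0 \cup K(t)$. Applying Schauder theory locally around each such regular boundary point, and combining with interior smoothness of harmonic functions, yields $u(t) \in C^{1,\beta}(\overline{\Omega(u(t))} \setminus (K_0 \cup K(t)))$ as claimed.

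The main technical point is really contained in the statement and proof of \tref{regularity_estimate}, in particular the separation of $K(t)$ from $\partial \Omega_0$, which was established there via the half-plane blowup argument at points of $\partial \Omega_0 \setminus K_0$. Beyond invoking that result, there is no further iteration needed here since $F$ has only one monotonicity interval; the only mild obstacle is the bookkeeping of ensuring $K_0 \cup K(t)$ is compact and contained in the interior of $U$, which follows from \lref{domain_size_estimate}.
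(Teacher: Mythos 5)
Your proposal is correct and follows the same route the paper indicates: invoke \tref{main} to reduce to $u(t)\in\mathcal{M}[u_0,F(t)]$ with a single monotone step, apply the full (singular-set) version of \tref{regularity_estimate} to obtain the free boundary regularity away from $K_0\cup K(t)$, and finish with Schauder estimates for the harmonic function $u(t)$. The only addition you make, correctly, is the observation that compactness of $K(t)$ follows from \lref{domain_size_estimate}, which the paper leaves implicit.
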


The proof is similar to \cref{low_dim_regularity} proved above, using again our uniqueness result \tref{main} and now the full statement of \tref{regularity_estimate}.

\section{Minimizing movements for piecewise monotone forcing}

Our primary tool for understanding the minimizing movements will be the following max-min property of the energy functional:

\begin{lemma}\label{lem:energymaxmin}
For any $u_0,u_1,v_1$,
\begin{equation}\label{e.J_minmax}
    \mathcal{J}[u_1] + \mathcal{J}[v_1] = \mathcal{J}[u_1\wedge v_1] + \mathcal{J}[u_1\vee v_1]
\end{equation}
\begin{equation}\label{e.Diss_minmax}
    \Diss[u_0, u_1] + \Diss[u_0, v_1] = \Diss[u_0, u_1\wedge v_1] + \Diss[u_0, u_1\vee v_1]
\end{equation}
and thus
\begin{equation}\label{eq:same_ic_energy_minmax}
    \mathcal{E}[u_0, u_1] + \mathcal{E}[u_0, v_1] = \mathcal{E}[u_0, u_1\wedge v_1] + \mathcal{E}[u_0, u_1\vee v_1].
\end{equation}
Furthermore, for any $u_0, v_0, u_1, v_1$ with $\Omega(u_0) \subseteq \Omega(v_0)\cap \Omega(u_1)$ and $\Omega(v_0)\subseteq \Omega(v_1)$,
\begin{equation}\label{eq:ordered_ic_energy_minmax}
    \mathcal{E}[u_0, u_1] + \mathcal{E}[v_0, v_1] = \mathcal{E}[u_0, u_1\wedge v_1] + \mathcal{E}[v_0, u_1\vee v_1].
\end{equation}
\end{lemma}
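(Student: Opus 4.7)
The plan is to reduce each identity to a pointwise or set-theoretic calculation together with inclusion-exclusion. For \eqref{e.J_minmax}, the Dirichlet part uses the standard $H^1$ lattice identity: on the set $\{u_1 \geq v_1\}$, $\nabla(u_1 \vee v_1) = \nabla u_1$ and $\nabla(u_1 \wedge v_1) = \nabla v_1$, and symmetrically on $\{u_1 < v_1\}$, so $|\nabla(u_1 \vee v_1)|^2 + |\nabla(u_1 \wedge v_1)|^2 = |\nabla u_1|^2 + |\nabla v_1|^2$ pointwise a.e., and integration gives equality. The volume part uses nonnegativity of $u_1, v_1$: then $\{u_1 \vee v_1 > 0\} = \Omega(u_1) \cup \Omega(v_1)$ and $\{u_1 \wedge v_1 > 0\} = \Omega(u_1) \cap \Omega(v_1)$, so the measure identity $|E \cup F| + |E \cap F| = |E| + |F|$ finishes it.

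For \eqref{e.Diss_minmax}, write $A = \Omega(u_0)$, $B = \Omega(u_1)$, $C = \Omega(v_1)$. The advancing term on the left is $\mu_+(|B\setminus A| + |C\setminus A|)$; since $(B\setminus A) \cup (C\setminus A) = (B\cup C)\setminus A$ and $(B\setminus A)\cap(C\setminus A) = (B\cap C)\setminus A$, inclusion-exclusion gives $|B\setminus A|+|C\setminus A| = |(B\cup C)\setminus A| + |(B\cap C)\setminus A|$, matching the right side. The receding terms decompose identically via the complementary sets $A\setminus B$ and $A\setminus C$. Identity \eqref{eq:same_ic_energy_minmax} is then immediate by summation.

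The identity \eqref{eq:ordered_ic_energy_minmax} is the one that genuinely uses the containment hypotheses. Its $\mathcal{J}$-portion reduces to \eqref{e.J_minmax} unchanged, so only the dissipation pieces must be compared. Set $A = \Omega(u_0)$, $B = \Omega(v_0)$, $C = \Omega(u_1)$, $D = \Omega(v_1)$; the hypotheses give $A \subseteq B$, $A \subseteq C$, and $B \subseteq D$, hence also $A \subseteq D$. These inclusions kill every receding term simultaneously:
\[|A\setminus C| \;=\; |B\setminus D| \;=\; |A\setminus(C\cap D)| \;=\; |B\setminus(C\cup D)| \;=\; 0.\]
For the advancing terms,
\[|C\setminus A| + |D\setminus B| \;=\; (|C|-|A|) + (|D|-|B|),\]
while
\[|(C\cap D)\setminus A| + |(C\cup D)\setminus B| \;=\; (|C\cap D|-|A|) + (|C\cup D|-|B|),\]
and these agree by $|C\cap D|+|C\cup D|=|C|+|D|$. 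The only mildly subtle point is this last step of bookkeeping: because the two dissipation functionals on each side of \eqref{eq:ordered_ic_energy_minmax} use different base sets, one cannot invoke a single inclusion-exclusion as in \eqref{e.Diss_minmax}; the containment hypotheses are precisely what makes all receding terms vanish so that the remaining advancing contributions balance.
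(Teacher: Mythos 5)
Your proof is correct and takes essentially the same approach as the paper: the first three identities are standard lattice/inclusion-exclusion facts (which the paper simply labels as such), and for \eqref{eq:ordered_ic_energy_minmax} both you and the paper reduce to the dissipation terms, observe that the hypotheses force all receding contributions to vanish, and balance the remaining advancing pieces via $|C\cap D|+|C\cup D|=|C|+|D|$. You are somewhat more explicit about checking that the receding terms vanish, which the paper leaves implicit by immediately writing every dissipation as a single $\mu_+$-weighted set difference.
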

\begin{proof}
    The properties \eqref{e.J_minmax}, \eqref{e.Diss_minmax}, and \eqref{eq:same_ic_energy_minmax} are standard.

    We check \eqref{eq:ordered_ic_energy_minmax}. By \eqref{e.J_minmax}, this reduces to checking the equality of dissipations. Using the ordering of positive sets, we have
    \begin{align*}
        &\frac{1}{\mu_+}\left(\Diss[u_0, u_1] + \Diss[v_0, v_1] - \Diss[u_0, u_1\wedge v_1] - \Diss[v_0, u_1\vee v_1]\right) \\&= |\Omega(u_1)\setminus \Omega(u_0)| + |\Omega(v_1)\setminus \Omega(v_0)| \\&\qquad\cdots- |\Omega(u_1\wedge v_1)\setminus \Omega(u_0)| - |\Omega(u_1\vee v_1)\setminus \Omega(v_0)|
        \\&= |\Omega(u_1)| -  |\Omega(u_0)| + |\Omega(v_1)| - |\Omega(v_0)| - |\Omega(u_1\wedge v_1)| \\
        &  \qquad \cdots+ |\Omega(u_0)| - |\Omega(u_1\vee v_1)| + |\Omega(v_0)|
        \\&= |\Omega(u_1)| + |\Omega(v_1)| - |\Omega(u_1\wedge v_1)| - |\Omega(u_1\vee v_1)|
        \\&= 0
    \end{align*}
\end{proof}

The typical application of this lemma starts with $u_1, v_1$ as known energy minimizers, and concludes that $u_1\vee v_1$ and $u_1\wedge v_1$ must also be minimizers. Of course, this requires some additional assumption on the forcing data of $u_1, v_1$, but we have flexibility as to what assumptions we might make.

Recall the notation $\mathcal{M}[u,F]$ defined in \eqref{eq:argmin_notation} for the set of minimizers with forcing $F$ from profile $u$. As a first application of the max-min property, we show a type of comparison holds for candidate minimizing movements.

\begin{lemma}\label{lem:minmaxlim}
    $\mathcal{M}[u, F]$ is closed under pointwise maximums and minimums, as well as pointwise limits.    In particular there exist maximal and minimal elements of $\mathcal{M}[u,F]$ which we denote, respectively, $\mathcal{M}^{\max}[u, F]$ and $\mathcal{M}^{\min}[u, F]$.
\end{lemma}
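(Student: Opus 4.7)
The plan is to verify the two closure properties separately and then extract the extremal elements via a standard lattice construction. The first part is immediate from \eqref{eq:same_ic_energy_minmax}: if $u_1, v_1 \in \mathcal{M}[u, F]$, then both $u_1 \wedge v_1$ and $u_1 \vee v_1$ lie in $F + H^1_0(U)$ (the pointwise max and min of two $H^1$ functions with common trace $F$ again have trace $F$) and so are admissible competitors; the identity reads
\[\mathcal{E}[u, u_1 \wedge v_1] + \mathcal{E}[u, u_1 \vee v_1] = 2\inf_{v \in F + H^1_0(U)} \mathcal{E}[u, v],\]
and since each summand on the left is no smaller than the infimum, both must equal it.

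For the pointwise limit closure, let $u_n \in \mathcal{M}[u, F]$ converge pointwise to $u_\infty$. By Lemma \ref{lem:basic_estimates}(i) and Lemma \ref{lem:domain_size_estimate}(i) the $u_n$ are uniformly bounded in $H^1(U)$, so along a subsequence $u_n \rightharpoonup u_\infty$ weakly in $H^1$; weak continuity of the trace places $u_\infty$ in $F + H^1_0(U)$, and $\mathcal{D}$ is weakly lower semicontinuous. To handle the measure terms together I would rewrite
\[\mathcal{E}[u, v] = \mathcal{D}[v] + (1+\mu_+)|\Omega(v) \setminus \Omega(u)| + (1-\mu_-)|\Omega(v) \cap \Omega(u)| + \mu_- |\Omega(u)|,\]
whose measure coefficients are both strictly positive thanks to the standing assumption $\mu_- < 1$. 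For each fixed $A \in \{\Omega(u)^c, \Omega(u)\}$ the pointwise inequality $\one_{\{u_\infty > 0\} \cap A} \leq \liminf_n \one_{\{u_n > 0\} \cap A}$ is immediate (trivial off $\{u_\infty > 0\}$, and when $u_\infty(x) > 0$ we have $u_n(x) > 0$ eventually), so Fatou gives lower semicontinuity of $|\Omega(v) \cap A|$; summing with positive weights yields $\mathcal{E}[u, u_\infty] \leq \liminf_n \mathcal{E}[u, u_n]$, and since each $u_n$ is a minimizer so is $u_\infty$. The main obstacle lives right here: the raw receding dissipation $\mu_- |\Omega(u) \setminus \Omega(v)|$ is not Fatou-lsc in $v$ under pointwise convergence, but the rewriting absorbs it into genuinely lsc terms, using $\mu_- < 1$ in a critical way.

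Finally, for the extremal elements I would set $M = \sup\{|\Omega(w)| : w \in \mathcal{M}[u, F]\}$, which is finite by Lemma \ref{lem:domain_size_estimate}(i), and pick $w_n \in \mathcal{M}[u, F]$ with $|\Omega(w_n)| \to M$. Then $V_n = w_1 \vee \dots \vee w_n$ is a minimizer by iterating the first step, and its pointwise increasing limit $V_\infty$ lies in $\mathcal{M}[u, F]$ by the second step, with $|\Omega(V_\infty)| = M$ by monotone convergence. For any other $w \in \mathcal{M}[u,F]$, the first step gives $V_\infty \vee w \in \mathcal{M}[u, F]$ with $\Omega(V_\infty \vee w) \supseteq \Omega(V_\infty)$, so the definition of $M$ forces $\Omega(w) \subseteq \Omega(V_\infty)$ up to null sets; the maximum principle applied to the harmonic function $V_\infty - w$ on $\Omega(w)$ then upgrades this to $w \leq V_\infty$ pointwise, so $V_\infty$ is maximal. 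The minimal element is constructed symmetrically.
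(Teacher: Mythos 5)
Your proof is correct, and the middle step takes a genuinely different route from the paper. For closure under pointwise limits, the paper invokes the uniform Lipschitz bound and linear nondegeneracy from Lemma~\ref{lem:basic_estimates} to show that the indicator functions $\one_{\Omega(u_n)}$ actually converge pointwise to $\one_{\Omega(u_\infty)}$, so that all the measure terms in $\mathcal{E}[u,\cdot]$ pass to the limit by dominated convergence; only the Dirichlet energy is handled by lower semicontinuity. You instead reorganize the functional as
\[\mathcal{E}[u, v] = \mathcal{D}[v] + (1+\mu_+)|\Omega(v) \setminus \Omega(u)| + (1-\mu_-)|\Omega(v) \cap \Omega(u)| + \mu_- |\Omega(u)|,\]
observe that $\mu_- < 1$ makes every variable coefficient strictly positive, and apply Fatou to each piece. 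This is a neat observation: it bypasses the free-boundary regularity machinery entirely, using only $H^1$-weak lower semicontinuity, Fatou, and the arithmetic of the dissipation. The trade-off is that the paper's stronger conclusion (equality in the limit for the measure terms) is not obtained, but it is not needed for this lemma. Both routes ultimately rely on $\mu_- < 1$, just in different places — you use it in the algebraic rewriting, the paper uses it inside the nondegeneracy estimate. Your treatment of the extremal element is also slightly more explicit than the paper's (which asserts the ordering rather tersely): you observe that $|\Omega(w)\setminus\Omega(V_\infty)| = 0$ from maximality of the volume, then upgrade to the pointwise inequality via the strong maximum principle for the harmonic difference $V_\infty - w$ on $\Omega(w)$, which is a clean way to fill in that gap.
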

\begin{proof}
    The claim for pointwise maximums and minimums follows immediately from \eqref{eq:same_ic_energy_minmax}, since if $u_1, v_1$ are minimizers on the left-hand-side, then $u_1\wedge v_1, u_1\vee v_1$ must each also minimize $\mathcal{E}[u_0, \cdot]$ for that equality to hold.

    Next, let $v_n\in \mathcal{M}[u,F]$ be a sequence which converges pointwise to some $v$. The $v_n$ all have the same value for $\mathcal{E}[u, v_n]$, so the Dirichlet energies $\mathcal{D}[v_n]$ are uniformly bounded and the $v_n$ are weakly compact in $H^1_{\mathrm{loc}}$. Using the uniform Lipschitz continuity and nondegeneracy of the $v_n$ near the free boundary from Lemma \ref{lem:basic_estimates}, we get that $\one_{\Omega(v_n)}$ converges pointwise to $\one_{\Omega(v)}$. We conclude that $v\in H^1_0(U) + F$, and dominated convergence gives convergence for all of the measure terms in order to conclude that $|\Omega(v)| + \Diss[u, v] = \lim_n |\Omega(v_n)| + \Diss[u, v_n]$. On the other hand, $\mathcal{D}[v] \leq \liminf_n \mathcal{D}[v_n]$, so $\mathcal{E}[u, v] \leq \liminf \mathcal{E}[u, v_n]$ and $v\in \mathcal{M}[u, v]$.

    The existence of the maximal and minimal elements, $\mathcal{M}^{\max}[u, F]$ and $\mathcal{M}^{\min}[u, F]$, is now straightforward.  For example, among minimizers $|\Omega(v)|$ is bounded above by the energy $\mathcal{E}[u, v]$. Thus, $\{ |\Omega(v)| : v\in \mathcal{M}[u, F] \}$ has a supremum and an infimum. We can then use pointwise maximums to construct an increasing sequence $v_n$ with $|\Omega(v_n)|$ converging upwards to the supremal volume. The maximum volume profile must be ordered with respect to all other profiles since it does not gain volume under pointwise maximum, and a similar argument holds for the minimum volume profile.
\end{proof}

In light of Lemma \ref{lem:energymaxmin}, the maximum and minimum minimizers are especially useful to work with. In particular they are monotone with respect to monotone changes of the boundary condition.

\begin{lemma}\label{lem:forcing_comparison}
    If $F\leq G$, then $\mathcal{M}^{\min}[u_0, F] \leq \mathcal{M}^{\min}[u_0, G]$ and $\mathcal{M}^{\max}[u_0, F] \leq \mathcal{M}^{\max}[u_0, G]$.

    If, additionally, $F < G$ somewhere on each component of $\partial U$, then $\mathcal{M}^{\max}[u_0, F] \leq \mathcal{M}^{\min}[u_0, G]$.
\end{lemma}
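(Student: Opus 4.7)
My plan is to apply the max-min identity \eqref{eq:same_ic_energy_minmax} of Lemma~\ref{lem:energymaxmin} to pairs of minimizers with boundary data $F \leq G$. Given $u \in \mathcal{M}[u_0, F]$ and $v \in \mathcal{M}[u_0, G]$, the boundary traces of $u \wedge v$ and $u \vee v$ are respectively $F \wedge G = F$ and $F \vee G = G$, so these are admissible competitors for the $F$- and $G$-problems. Adding the two minimality inequalities $\mathcal{E}[u_0, u] \leq \mathcal{E}[u_0, u \wedge v]$ and $\mathcal{E}[u_0, v] \leq \mathcal{E}[u_0, u \vee v]$ and comparing with \eqref{eq:same_ic_energy_minmax} forces both to hold with equality, so $u \wedge v \in \mathcal{M}[u_0, F]$ and $u \vee v \in \mathcal{M}[u_0, G]$. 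For the first weak inequality I specialize to $u = \mathcal{M}^{\min}[u_0, F]$ and $v = \mathcal{M}^{\min}[u_0, G]$: since $u \wedge v \in \mathcal{M}[u_0, F]$ and $u$ is the smallest element of this set, $u \leq u \wedge v \leq v$. The max-max case is symmetric, using $u \vee v \in \mathcal{M}[u_0, G]$ and the maximality of $v$ there.

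For the strict case I take $u = \mathcal{M}^{\max}[u_0, F]$ and $v = \mathcal{M}^{\min}[u_0, G]$ and suppose for contradiction that the open set $A = \{u > v\}$ is nonempty. Since $u \leq v$ on $\partial U$, one has $\overline{A} \Subset U$. My plan is first to analyze $\partial A \cap U$: at any such boundary point $x_0$ with $v(x_0) > 0$, both $u$ and $v$ are positive and harmonic in a neighborhood, and I would apply a strong maximum principle argument to the harmonic function $u - v$ to conclude $v \equiv 0$ on $\partial A_0 \cap U$ for each connected component $A_0$ of $A$. A further harmonic-extension argument then shows $v \equiv 0$ throughout $A_0$. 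The competitor $\tilde u = u \cdot \one_{U \setminus A_0}$ is then admissible for $\mathcal{M}[u_0, F]$ since $u = v = 0$ on $\partial A_0 \cap U$, and a direct expansion of the energies yields
\[
\mathcal{E}[u_0, u] - \mathcal{E}[u_0, \tilde u] = \int_{A_0} |\nabla u|^2 + (1 - \mu_-)|A_0 \cap \Omega(u_0)| + (1 + \mu_+)|A_0 \setminus \Omega(u_0)| > 0,
\]
using $0 < \mu_- < 1$ and $|A_0| > 0$. This contradicts the minimality of $u$.

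The main obstacle is the strong maximum principle step: at a point where the boundary of $A_0$ touches the boundary of $\{u < v\}$ tangentially with $u = v > 0$, the harmonic function $u - v$ changes sign nearby and the one-sided strong max principle does not apply directly. Handling this will require carefully exploiting the strict boundary condition $F < G$ on each component of $\partial U$ together with the collar structure of Lemma~\ref{lem:basic_estimates}(ii) and the component-wise ordering of Lemma~\ref{lem:basic_estimates}(v) applied to the pair $v$ and $u \vee v$ in $\mathcal{M}[u_0, G]$, to rule out such tangential touchings and reduce to the one-sided case where the strong max principle does apply.
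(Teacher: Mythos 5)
Your treatment of the two weak inequalities is correct and matches the paper's argument exactly.

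For the strict inequality your route diverges from the paper's and has gaps. First, the assertion that $\overline{A}\Subset U$ follows from $u\le v$ on $\partial U$ is not justified: $F\le G$ with equality at some boundary point allows $\{u>v\}$ to accumulate at $\partial U$, since a harmonic difference that vanishes on a piece of the boundary can be positive arbitrarily close to it. Second, and more centrally, the step you flag yourself is a genuine obstruction. To get $v\equiv 0$ on $\partial A_0\cap U$ you would need a one-sided strong maximum principle for $u-v$ at points of $\partial A_0$ where $v>0$; but at such a point $u-v$ is a harmonic function that vanishes while being positive on one side and (possibly) negative on the other, which is exactly the generic local behavior of a harmonic function near one of its zeros and cannot be excluded by any local argument. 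The suggested repair via Lemma~\ref{lem:basic_estimates}(v) applied to $v$ and $u\vee v$ gives nothing new, since $v\le u\vee v$ is automatic.

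The paper avoids $\{u>v\}$ entirely. It sets $w:=\mathcal{M}^{\max}[u_0,F]\wedge\mathcal{M}^{\min}[u_0,G]$ and observes, by the first part of the proof, that $w\in\mathcal{M}[u_0,F]$. Thus $w$ is a globally stable profile: it is harmonic on $\Omega(w)$, $\Omega(w)$ contains a full collar of $\partial U$ by Lemma~\ref{lem:domain_size_estimate}(ii), and every component of $\Omega(w)$ meets $\partial U$. Maximality of $u:=\mathcal{M}^{\max}[u_0,F]$ gives $w\le u$, and the hypothesis that $F<G$ somewhere on each component of $\partial U$ yields a nonempty open set near each boundary component where $u<v$, hence $w=u$. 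On each component of $\Omega(w)$ the harmonic function $w-u$ is $\le 0$ and vanishes on an open subset, so it vanishes identically by the strong maximum principle; the usual open--closed argument then forces $\Omega(w)=\Omega(u)$, giving $w=u$, i.e.\ $u\le v$. The essential idea you missed is to work with $u\wedge v$, which by the max--min identity inherits all the structural properties of a minimizer, rather than with the level set $\{u>v\}$, which does not, and to apply the maximum principle only to two ordered minimizers on the positivity set of the smaller one, a genuinely one-sided situation where the tangency problem never arises.
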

\begin{proof}
    For any $u\in \mathcal{M}[u_0, F]$ and $v\in \mathcal{M}[u_0, G]$, we have that $u \wedge v\in \mathcal{M}[u_0, F]$ and $u\vee v\in \mathcal{M}[u_0, G]$. This is because \eqref{eq:same_ic_energy_minmax} gives
    \[ \mathcal{E}[u_0, u] + \mathcal{E}[u_0, v] = \mathcal{E}[u_0, u\wedge v] + \mathcal{E}[u_0, u\vee v] \]
    Since $F\leq G$, we have $u\wedge v = F$ and $u\vee v = G$ on $\partial U$. Hence, minimality of the energies on the left side demands minimality of the energies on the right side, so $u\wedge v$ and $u\vee v$ are also minimizers. Maximality of $\mathcal{M}^{\max}[u_0, G]$ and minimality of $\mathcal{M}^{\min}[u_0, F]$ then lead to the first inequalities. Specifically $\mathcal{M}^{\min}[u_0, F] \wedge \mathcal{M}^{\min}[u_0, G] \in \mathcal{M}[u_0,F]$ and so minimality implies
    \[\mathcal{M}^{\min}[u_0, F] \leq \mathcal{M}^{\min}[u_0, F] \wedge \mathcal{M}^{\min}[u_0, G]\]
    implying that $\mathcal{M}^{\min}[u_0, F]\leq\mathcal{M}^{\min}[u_0, G]$ and similar for the other claimed inequality.
    
    Applying the first sentence of the proof again, $\mathcal{M}^{\max}[u_0, F]\wedge \mathcal{M}^{\min}[u_0, G] \in \mathcal{M}[u_0, F]$ and is therefore harmonic in its support. Now, assuming that $F<G$ somewhere on each component of $\partial U$, we have that $\mathcal{M}^{\max}[u_0, F]\wedge \mathcal{M}^{\min}[u_0, G] - \mathcal{M}^{\max}[u_0, F]$ is a difference of harmonic functions which is identically zero in neighborhoods on each of its components, so the strong maximum principle implies that $\mathcal{M}^{\max}[u_0, F]\wedge \mathcal{M}^{\min}[u_0, G] = \mathcal{M}^{\max}[u_0, F]$.
\end{proof}

The key structural property of the dissipation distance that we used in \eqref{eq:ordered_ic_energy_minmax} and the subsequent results is its additivity for monotonically ordered profiles: if $\Omega_0 \subset \Omega_1 \subset \Omega_2$, then $\Diss[\Omega_0, \Omega_1] + \Diss[\Omega_1, \Omega_2] = \Diss[\Omega_0, \Omega_2]$. Under certain circumstances, we can also use this additivity to argue that a minimizer relative to one profile is also a minimizer relative to another profile. This will be the first step toward proving uniqueness results for the evolution.

\begin{lemma}\label{l.diff_ics_merge}
    Let $u_0, v_0, F$ be such that
    \[ \Omega(u_0)\cup \Omega(v_0) \subseteq \Omega(\mathcal{M}^{\max}[u_0, F])\cap \Omega(\mathcal{M}^{\max}[v_0, F]) \]
    Then
    \[ \{ u' \in \mathcal{M}[u_0, F] : \Omega(u_0)\cup \Omega(v_0) \subseteq \Omega(u') \} = \{ u' \in \mathcal{M}[v_0, F] : \Omega(u_0)\cup \Omega(v_0) \subseteq \Omega(u') \} \]
    In particular, $\mathcal{M}^{\max}[u_0, F] = \mathcal{M}^{\max}[v_0, F]$.
    
    Similarly, if
    \[ \Omega(\mathcal{M}^{\min}[u_0, F])\cup \Omega(\mathcal{M}^{\min}[v_0, F]) \subseteq \Omega(u_0)\cap \Omega(v_0) \]
    Then $\mathcal{M}^{\min}[u_0, F] = \mathcal{M}^{\min}[v_0, F]$, and
    \[ \{ u' \in \mathcal{M}[u_0, F] : \Omega(u_0)\cap \Omega(v_0) \supseteq \Omega(u') \} = \{ u' \in \mathcal{M}[v_0, F] : \Omega(u_0)\cap \Omega(v_0) \supseteq \Omega(u') \} \]
    
\end{lemma}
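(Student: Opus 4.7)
The plan is to exploit the additivity of the dissipation along monotonically ordered profiles. Set $A := \Omega(u_0)$ and $B := \Omega(v_0)$, and observe that for any competitor $u'$ with $A \cup B \subseteq \Omega(u')$, the receding parts of both $\Diss[u_0, u']$ and $\Diss[v_0, u']$ vanish, yielding the direct identity
\[
    \mathcal{E}[u_0, u'] - \mathcal{E}[v_0, u'] \;=\; \mu_+\bigl(|B| - |A|\bigr).
\]
Crucially the right hand side does not depend on $u'$, so on the restricted class of admissible profiles containing $A\cup B$, the functionals $\mathcal{E}[u_0,\cdot]$ and $\mathcal{E}[v_0,\cdot]$ differ only by an additive constant and therefore have the same minimizers.

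The main step is to promote this restricted equivalence into the full equivalence claimed in the lemma. Fix $u' \in \mathcal{M}[u_0, F]$ with $A\cup B \subseteq \Omega(u')$ and let $w_B := \mathcal{M}^{\max}[v_0, F]$, which by hypothesis also lies in the restricted class. Minimality of $u'$ relative to $u_0$ gives $\mathcal{E}[u_0, u'] \leq \mathcal{E}[u_0, w_B]$; since both profiles contain $A\cup B$, subtracting the additive identity from each side yields $\mathcal{E}[v_0, u'] \leq \mathcal{E}[v_0, w_B]$, and the latter is the minimum of $\mathcal{E}[v_0, \cdot]$ over the entire admissible class. Hence $u'\in \mathcal{M}[v_0, F]$; exchanging the roles of $u_0$ and $v_0$ gives the reverse inclusion, proving the first set equality.

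The ``in particular'' statement $\mathcal{M}^{\max}[u_0, F] = \mathcal{M}^{\max}[v_0, F]$ is then immediate. The hypothesis places $w_A := \mathcal{M}^{\max}[u_0, F]$ in the restricted class, so by the step above $w_A \in \mathcal{M}[v_0, F]$, whence $w_A \leq w_B$ by maximality of $w_B$; the symmetric argument gives $w_B \leq w_A$, and equality follows.

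The dual statement for minima is strictly parallel. If $\Omega(u') \subseteq A \cap B$, then it is the advancing parts of the dissipations that vanish and one finds
\[
    \mathcal{E}[u_0, u'] - \mathcal{E}[v_0, u'] \;=\; \mu_-\bigl(|A| - |B|\bigr),
\]
again independent of $u'$. Replacing $\mathcal{M}^{\max}$ by $\mathcal{M}^{\min}$ throughout and invoking the dual hypothesis reruns the same argument. I expect the only point requiring real thought is the observation that the additive-constant identity combined with comparison to $w_B$ automatically provides a two-sided bound on $\mathcal{E}[v_0, u']$; once that is recognized the rest is bookkeeping.
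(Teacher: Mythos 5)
Your proposal is correct and follows essentially the same route as the paper: both derive the additive-constant identity $\mathcal{E}[u_0,u'] - \mathcal{E}[v_0,u'] = \mu_+(|B|-|A|)$ on the restricted class, then use the hypothesis to guarantee a minimizer in the restricted class so that minimal energies on each side also differ by the same constant. Your write-up is slightly more explicit than the paper's in naming $w_B=\mathcal{M}^{\max}[v_0,F]$ as the comparison profile, but the substance is identical, including the dual $\mu_-$ case.
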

\begin{proof}
    The assumption implies that both energy minimization problems have minimizers $u'$ which satisfy $\Omega(u_0)\cup \Omega(v_0) \subseteq \Omega(u')$. For such profiles, we have
    \begin{gather*}
       \Diss(u_0, u') = \mu_+|\Omega(u') \setminus \Omega(u_0)|\\
       \Diss(v_0, u') = \mu_+|\Omega(u') \setminus \Omega(v_0)|
    \end{gather*}
    leading to
    \begin{align*}
        \mathcal{E}[u_0, u'] &= \mathcal{E}[v_0, u'] + \mu_+(|\Omega(u')\setminus \Omega(u_0)| - |\Omega(u')\setminus \Omega(v_0)|)
        \\&= \mathcal{E}[v_0, u'] + \mu_+(|\Omega(v_0)\setminus \Omega(u_0)| - |\Omega(u_0)\setminus \Omega(v_0)|)
    \end{align*}
    Thus, the energy functionals differ by a constant in this case. By considering the profiles satisfying $\Omega(u_0)\cup \Omega(v_0) \subseteq \Omega(u')$ that minimize energy for $u_0$ and $v_0$, we see that the minimum energies must also differ by the same constant. It follows that any such minimizer is a minimizer for both $u_0$ and $v_0$.

    The decreasing case is similar; for $u'$ with $\Omega(u') \subset \Omega(u_0)\cap \Omega(v_0)$, one has
    \begin{align*}
        \mathcal{E}[u_0, u'] &= \mathcal{E}[v_0, u'] + \mu_-(|\Omega(u_0)\setminus \Omega(u')| - |\Omega(v_0)\setminus \Omega(u')|)
        \\&= \mathcal{E}[v_0, u'] + \mu_-(|\Omega(u_0)\setminus \Omega(v_0)| - |\Omega(v_0)\setminus \Omega(u_0)|).
    \end{align*}
\end{proof}

We are now ready to derive the main lemmas of this section, which fully characterize minimizing movements under monotone forcing.  Our first result shows that, by appropriate choice of the minimizer at each step, the time incremental scheme for monotone forcing is stable under partition refinement. For a sequence of nested partitions, the time incremental scheme with these choices coincides with its continuous time limit at all partition times.

\begin{lemma}\label{lem:max_min_mov_stability}
    If $F_0 \leq F_1 \leq F_2$, and $u_0 \in \mathcal{M}[u_0, F_0]$, then
    \[ \mathcal{M}^{\max}[\mathcal{M}^{\max}[u_0, F_1], F_2] = \mathcal{M}^{\max}[u_0, F_2]. \]
    In other words, for increasing forcing, a time incremental procedure which chooses the largest energy minimizer in each step is stable under partition refinement.

    Similarly, if $F_0 \geq F_1 \geq F_2$, then
    \[ \mathcal{M}^{\min}[\mathcal{M}^{\min}[u_0, F_1], F_2] = \mathcal{M}^{\min}[u_0, F_2] \]
    in other words, for decreasing forcing, a time incremental procedure which chooses the smallest energy minimizer in each step is stable under partition refinement.
\end{lemma}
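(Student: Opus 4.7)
Write $u_1 := \mathcal{M}^{\max}[u_0, F_1]$ and $u_2 := \mathcal{M}^{\max}[u_0, F_2]$, so the goal in the ascending case is the identity $\mathcal{M}^{\max}[u_1, F_2] = u_2$. The plan is to obtain this directly from the ascending clause of \lref{diff_ics_merge} applied with initial data $u_0$ and $u_1$ under the common forcing $F_2$, after verifying its monotone-inclusion hypothesis. The decreasing case will follow by the mirror-image argument using the descending clause of the same lemma.

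First I would establish the pointwise ordering $u_0 \leq u_1 \leq u_2$. Since $u_0 \in \mathcal{M}[u_0, F_0]$, the maximality definition gives $u_0 \leq \mathcal{M}^{\max}[u_0, F_0]$, and \lref{forcing_comparison} applied to $F_0 \leq F_1$ produces $\mathcal{M}^{\max}[u_0, F_0] \leq u_1$; a second application to $F_1 \leq F_2$ yields $u_1 \leq u_2$. This immediately supplies one of the inclusions needed to invoke \lref{diff_ics_merge}: from $\Omega(u_0) \subseteq \Omega(u_1) \subseteq \Omega(u_2)$ we get $\Omega(u_0) \cup \Omega(u_1) = \Omega(u_1) \subseteq \Omega(u_2) = \Omega(\mathcal{M}^{\max}[u_0, F_2])$.

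The second inclusion requires showing $\Omega(u_1) \subseteq \Omega(\mathcal{M}^{\max}[u_1, F_2])$, and the cleanest route is via self-stability of $u_1$, i.e.\ $u_1 \in \mathcal{M}[u_1, F_1]$. This is the one step that requires genuine work: for any competitor $w \in H^1_0(U) + F_1$, subadditivity $\Diss[u_0, w] \leq \Diss[u_0, u_1] + \Diss[u_1, w]$ combined with $\mathcal{E}[u_0, u_1] \leq \mathcal{E}[u_0, w]$ gives, after cancelling the common term $\Diss[u_0, u_1]$, the inequality $\mathcal{J}[u_1] \leq \mathcal{J}[w] + \Diss[u_1, w] = \mathcal{E}[u_1, w]$. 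Having established $u_1 \in \mathcal{M}[u_1, F_1]$, \lref{forcing_comparison} applied to $F_1 \leq F_2$ yields $u_1 \leq \mathcal{M}^{\max}[u_1, F_1] \leq \mathcal{M}^{\max}[u_1, F_2]$, so $\Omega(u_1) \subseteq \Omega(\mathcal{M}^{\max}[u_1, F_2])$.

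With both inclusions in hand the hypothesis of the ascending clause of \lref{diff_ics_merge} is satisfied, and its conclusion $\mathcal{M}^{\max}[u_0, F_2] = \mathcal{M}^{\max}[u_1, F_2]$ is precisely the identity claimed. The decreasing case is handled by reversing every inequality and inclusion throughout: establish $u_0 \geq \mathcal{M}^{\min}[u_0, F_1] \geq \mathcal{M}^{\min}[u_0, F_2]$ from monotonicity of $\mathcal{M}^{\min}$, derive the symmetric self-stability of $\mathcal{M}^{\min}[u_0, F_1]$ by the analogous dissipation subadditivity argument, and conclude via the descending clause of \lref{diff_ics_merge}. The only substantive obstacle is the self-stability step; the rest is bookkeeping with the monotonicity properties of $\mathcal{M}^{\max}$ and $\mathcal{M}^{\min}$ already at hand.
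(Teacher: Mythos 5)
Your proof is correct and takes essentially the same route as the paper: both reduce the claim to \lref{diff_ics_merge} applied to the pair $u_0$, $u_1 := \mathcal{M}^{\max}[u_0, F_1]$ under common forcing $F_2$, and both verify the required inclusion hypothesis through the monotonicity in Lemma~\ref{lem:forcing_comparison}. Where you go further is in making explicit the self-stability fact $u_1 \in \mathcal{M}[u_1, F_1]$ via the dissipation triangle inequality; the paper's remark that the inclusion $\Omega(u_1) \subseteq \Omega(\mathcal{M}^{\max}[u_1, F_2])$ ``comes for free'' by \eqref{eq:same_ic_energy_minmax} elides exactly this step, and your derivation is the right way to justify applying Lemma~\ref{lem:forcing_comparison} with base point $u_1$.
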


\begin{proof}
    By \lref{diff_ics_merge}, it suffices to show that
    \[ \Omega(u_0) \cup \Omega(\mathcal{M}^{\max}[u_0, F_1]) \subseteq 
 \Omega(\mathcal{M}^{\max}[\mathcal{M}^{\max}[u_0, F_1], F_2]) \cap \Omega(\mathcal{M}^{\max}[u_0, F_2]) \]
    Most of the inclusions come for free by \eqref{eq:same_ic_energy_minmax}, which gives
    \[ \Omega(u_0) \subseteq \Omega(\mathcal{M}^{\max}[u_0, F_1]) \subseteq \Omega(\mathcal{M}^{\max}[\mathcal{M}^{\max}[u_0, F_1], F_2]) \]
    Lemma \ref{lem:forcing_comparison} gives that $\mathcal{M}^{\max}[u_0, F_1] \leq \mathcal{M}^{\max}[u_0, F_2]$, so we conclude. The decreasing case is identical.
\end{proof}

As a consequence of the previous lemma, if $F$ is monotone increasing then
\begin{equation}
    u(t) = \mathcal{M}^{\max}[u_0, F(t)]
\end{equation}
is a minimizing movements solution and is the pointwise maximum of all minimizing movements solutions. Thus, our efforts now turn to showing that any other choice of minimizing movements yields effectively the same solution. In fact, we find that for strictly increasing forcing, a nonmaximal minimizing movements procedure catches up to the maximal minimizing movements in each subsequent step.

\begin{lemma}\label{lem:comparison_to_maximal_min_mov}
    If $F_0 < F_1 < F_2$ on $\partial U$ and $u_0\in \mathcal{M}[u_0, F_0]$, then for any $u_1\in \mathcal{M}[u_0, F_1], u_2\in \mathcal{M}[u_1, F_2]$, we have
    \[ \mathcal{M}^{\max}[u_0, F_1] \leq u_2 \]
    The similar result holds in the decreasing case.
\end{lemma}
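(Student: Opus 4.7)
The plan is to show $M_1 := \mathcal{M}^{\max}[u_0, F_1] \leq u_2$ by inserting the auxiliary profile $m := \mathcal{M}^{\min}[u_0, F_2]$, which strictly dominates $M_1$ by the second part of Lemma \ref{lem:forcing_comparison}, and then showing $m \leq u_2$ by a carefully chosen application of \eqref{eq:ordered_ic_energy_minmax}.

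First I would record the relevant pointwise orderings. A brief triangle-inequality argument shows that any $u_1 \in \mathcal{M}[u_0, F_1]$ is also self-stable, $u_1 \in \mathcal{M}[u_1, F_1]$: if $v$ has trace $F_1$, then $\mathcal{J}[u_1]+\Diss[u_0,u_1]\leq \mathcal{J}[v]+\Diss[u_0,v]$ combined with $\Diss[u_0,v]\leq \Diss[u_0,u_1]+\Diss[u_1,v]$ gives $\mathcal{J}[u_1]\leq \mathcal{J}[v]+\Diss[u_1,v]$. Combined with the hypotheses $u_0 \in \mathcal{M}[u_0, F_0]$, $u_2 \in \mathcal{M}[u_1, F_2]$ and the strict chain $F_0 < F_1 < F_2$, the monotone half of Lemma \ref{lem:forcing_comparison} yields $u_0 \leq u_1 \leq u_2$ and $u_0 \leq m$, while its strict half gives $M_1 \leq m$.

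Then I would apply \eqref{eq:ordered_ic_energy_minmax} with the substitution $(u_0, v_0, u_1, v_1) \mapsto (u_0, u_1, m, u_2)$; the nesting hypotheses $\Omega(u_0) \subseteq \Omega(u_1) \cap \Omega(m)$ and $\Omega(u_1) \subseteq \Omega(u_2)$ follow from the orderings above. The identity
\[\mathcal{E}[u_0, m] + \mathcal{E}[u_1, u_2] = \mathcal{E}[u_0, m \wedge u_2] + \mathcal{E}[u_1, m \vee u_2],\]
together with the fact that $m \wedge u_2$ and $m \vee u_2$ both have trace $F_2$ on $\partial U$ and so are admissible competitors for the respective minimization problems on the left, forces the two inequalities coming from minimality of $m$ and $u_2$ to be equalities. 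In particular $m \wedge u_2 \in \mathcal{M}[u_0, F_2]$, and by minimality of $m$ within this set we get $m \leq m \wedge u_2$, i.e., $m \leq u_2$. Chaining $M_1 \leq m \leq u_2$ concludes the increasing case, and the decreasing case follows by the mirror-image identity with $\wedge$ and $\vee$ interchanged.

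The main obstacle is the choice of intermediate profile. Applying \eqref{eq:ordered_ic_energy_minmax} directly with $M_1$ in place of $m$ only yields $M_1 \wedge u_2 \in \mathcal{M}[u_0, F_1]$, which is tautological given the maximality of $M_1$; the detour through the minimal $F_2$-minimizer $m$ converts the concluding step into an application of \emph{minimality}, which carries content. This is also the place where the strict inequality $F_1 < F_2$ enters the proof, via the second assertion of Lemma \ref{lem:forcing_comparison}.
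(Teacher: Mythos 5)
Your proof is correct, but it takes a genuinely different route from the paper's, and the reason you give for taking that detour is based on a misreading of what the direct application of \eqref{eq:ordered_ic_energy_minmax} yields.

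The paper substitutes $(u_0,v_0,u_1,v_1)\mapsto(u_0,u_1,\mathcal{M}^{\max}[u_0,F_1],u_2)$ and exploits the \emph{second} equality that falls out, $\mathcal{M}^{\max}[u_0,F_1]\vee u_2\in\mathcal{M}[u_1,F_2]$; a strong maximum principle comparison of $\mathcal{M}^{\max}[u_0,F_1]\vee u_2$ against $u_2$ (both harmonic in their positivity sets, both equal to $F_2$ on $\partial U$, and every component of a stable positivity set meets $\partial U$) then forces $\mathcal{M}^{\max}[u_0,F_1]\vee u_2=u_2$. Your stated obstacle --- that the direct substitution ``only yields $M_1\wedge u_2\in\mathcal{M}[u_0,F_1]$, which is tautological'' --- is therefore not accurate: the direct substitution produces two pieces of information, and the $\vee$ one is not tautological. (Even the $\wedge$ conclusion is not vacuous: since $F_1<F_2$ forces $M_1\wedge u_2=M_1$ in a neighborhood of $\partial U$, the strong maximum principle applied to the harmonic difference gives $M_1\wedge u_2=M_1$, which is again the desired bound.)

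That said, your alternative --- insert $m=\mathcal{M}^{\min}[u_0,F_2]$, get $m\wedge u_2\in\mathcal{M}[u_0,F_2]$ from \eqref{eq:ordered_ic_energy_minmax} with $(u_0,u_1,m,u_2)$, deduce $m\le u_2$ from extremality of $m$, and chain $M_1\le m\le u_2$ via the strict part of Lemma~\ref{lem:forcing_comparison} --- is a valid and rather clean argument. It relocates the strong-maximum-principle content into the black box of Lemma~\ref{lem:forcing_comparison} (the $M_1\le m$ step) and replaces it in the final comparison by the order-extremality of $\mathcal{M}^{\min}$, which is a nice structural simplification. Two small points to tighten: the orderings $u_0\le u_1\le u_2$ and $u_0\le m$ come from the \emph{strict} part of Lemma~\ref{lem:forcing_comparison} (applied via $u_0\le\mathcal{M}^{\max}[u_0,F_0]\le\mathcal{M}^{\min}[u_0,F_1]\le u_1$, etc.), not from the non-strict monotone half, since $u_0,u_1,u_2$ are generic and not extremal minimizers; and the self-stability $u_1\in\mathcal{M}[u_1,F_1]$ you establish with the dissipation triangle inequality is indeed needed (the paper leaves this implicit) in order to invoke Lemma~\ref{lem:forcing_comparison} with base point $u_1$.
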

\begin{proof}
    By Lemma \ref{lem:forcing_comparison}, we have $u_0 \leq u_1 \leq u_2$, and we also have $u_0 \leq \mathcal{M}^{\max}[u_0, F_1]$. Then applying \eqref{eq:ordered_ic_energy_minmax}, we get that
    \[ \mathcal{E}[u_0, \mathcal{M}^{\max}[u_0, F_1]] + \mathcal{E}[u_1, u_2] = \mathcal{E}[u_0, \mathcal{M}^{\max}[u_0, F_1]\wedge u_2] + \mathcal{E}[u_1, \mathcal{M}^{\max}[u_0, F_1]\vee u_2] \]
    Then $\mathcal{M}^{\max}[u_0, F_1]\vee u_2\in \mathcal{M}[u_1, F_2]$, so a strong maximum principle argument using that $F_1 < F_2$ yields that $\mathcal{M}^{\max}[u_0, F_1] \leq u_2$.
\end{proof}

The previous two lemmas allow us to trap a general discrete scheme between time shifts of the maximum minimizing movements solution. With this, we can show that any choice of discrete scheme leads to the same continuous time limit at all times where the maximum minimizing movements solution does not jump. At jumps, any intermediate globally stable state between the left and right limits may be taken by the continuous time solution.

\begin{theorem}\label{thm:strictly_monotone_uniqueness}
    Suppose $F$ is strictly increasing on $[0, T]$, and $u_0\in \mathcal{M}[u_0, F(0)]$. For $t > 0$, define
\begin{gather*}
    u^{\min}(t) = \mathcal{M}^{\min}[u_0, F(t)] \\
    u^{\max}(t) = \mathcal{M}^{\max}[u_0, F(t)]
\end{gather*}
    Then for any partition $P = \{ t_1, \dots, t_n \}$, we have that $\{ u^{\min}(t_k) \}$ and $\{ u^{\max}(t_k) \}$ are minimizing movements for the partition. Any general minimizing movements $u^k$ satisfy $u^k\in \mathcal{M}[u_0, F(t_k)]$ for each $k$, and thus $u^{\min}(t_k) \leq u^k \leq u^{\max}(t_k)$.

    Moreover, $u^{\min}(t)$ and $u^{\max}(t)$ have the same left and right limits for all $t$. Hence, we conclude that all minimizing movements solutions for $F$ jump at the same set of times and agree away from those times, so we may say that the minimizing movements solution is unique up to replacement at jumps.
\end{theorem}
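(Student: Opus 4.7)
The plan is to prove the four assertions in turn: \textbf{(A)} $u^{\min}$ and $u^{\max}$ are themselves minimizing movements at any partition; \textbf{(B)} any minimizing movements $\{u^k\}$ satisfies $u^k\in\mathcal{M}[u_0,F(t_k)]$ at every step and is thus sandwiched between them; \textbf{(C)} $u^{\min}$ and $u^{\max}$ share one-sided limits at every time; and \textbf{(D)} the continuous time limit is uniquely determined away from a countable jump set. Part (A) is immediate from iterating Lemma \ref{lem:max_min_mov_stability}, which gives $u^{\max}(t_k) = \mathcal{M}^{\max}[u^{\max}(t_{k-1}),F(t_k)]$ and exhibits $\{u^{\max}(t_k)\}$ as a valid choice in the time-incremental scheme; the same reasoning works for $u^{\min}$.

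For Part (B) I induct on $k$, maintaining the two invariants $u^{k-1}\in\mathcal{M}[u_0,F(t_{k-1})]$ and $u_0\leq u^{k-1}$. The base case is the initial stability assumption. In the inductive step, global stability of $u^{k-1}$ together with the strict forcing comparison in Lemma \ref{lem:forcing_comparison} gives $u^{k-1}\leq\mathcal{M}^{\min}[u^{k-1},F(t_k)]\leq u^k$, so $u_0\leq u^k$ and $\Omega(u_0)\cup\Omega(u^{k-1})=\Omega(u^{k-1})\subseteq\Omega(u^k)$. I then verify the hypotheses of \lref{diff_ics_merge} applied to initial data $u_0$ and $u^{k-1}$ at forcing $F(t_k)$: the induction hypothesis combined with monotonicity of $\mathcal{M}^{\max}$ in the forcing gives $u^{k-1}\leq\mathcal{M}^{\max}[u_0,F(t_{k-1})]\leq\mathcal{M}^{\max}[u_0,F(t_k)]$, and strict comparison applied to $u^{k-1}$ itself places both $u_0$ and $u^{k-1}$ below $\mathcal{M}^{\max}[u^{k-1},F(t_k)]$, which yields the needed union-containment. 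The merge lemma then identifies
\[ \{u' \in \mathcal{M}[u^{k-1},F(t_k)] : \Omega(u')\supseteq\Omega(u^{k-1})\} = \{u' \in \mathcal{M}[u_0,F(t_k)] : \Omega(u')\supseteq\Omega(u^{k-1})\}, \]
and $u^k$ sits in the left-hand set by the containment just established, so $u^k\in\mathcal{M}[u_0,F(t_k)]$, completing the induction. The sandwich $u^{\min}(t_k)\leq u^k\leq u^{\max}(t_k)$ then follows directly from the definition of maximal and minimal minimizers.

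For Part (C), strict forcing comparison gives $u^{\max}(s)\leq u^{\min}(t)$ whenever $s<t$. Along any strictly monotone sequence approaching $t_0$, the resulting interlacing $u^{\min}(s_{n+1})\leq u^{\max}(s_{n+1})\leq u^{\min}(s_n)\leq u^{\max}(s_n)$ (in the decreasing case, and analogously for increasing sequences) forces the limits (which exist by Lemma \ref{lem:basic_estimates}) to coincide. For Part (D), Part (B) shows any discrete interpolant $u_\delta(t)$ is trapped between $u^{\min}$ and $u^{\max}$ evaluated at the largest partition point $\pi_\delta(t)\leq t$, so the continuous-time limit $u(t)$ is squeezed to $u^{\min}(t)=u^{\max}(t)$ at every point of common continuity. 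Since Lemma \ref{lem:forcing_comparison} makes both $u^{\min}$ and $u^{\max}$ pointwise monotone in $t$, the common jump set is at most countable.

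The main obstacle is the induction in Part (B). Each individual verification is elementary, but the induction must simultaneously carry two ordering relations, and strict monotonicity of $F$ enters critically in two places: it is what ensures the next iterate $u^k$ contains $\Omega(u^{k-1})$ (so that the merge lemma's output applies to $u^k$) and also what provides the strict inclusions $\mathcal{M}^{\max}[\,\cdot\,,F(t_{k-1})]\leq\mathcal{M}^{\min}[\,\cdot\,,F(t_k)]$ needed to verify its hypotheses.
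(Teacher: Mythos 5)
Your Parts (B), (C), and (D) are correct and track the paper closely; in fact your Part (B) induction, which applies \lref{diff_ics_merge} directly to the pair $(u_0, u^{k-1})$, is a slightly more streamlined route to the containment $u^k\in\mathcal{M}[u_0,F(t_k)]$ than the paper's version, which first establishes the sandwich $u^{\min}(t_k)\leq u^k\leq u^{\max}(t_k)$ via \eqref{eq:ordered_ic_energy_minmax} and then invokes the merge lemma. Your Part (C) argument, using $\mathcal{M}^{\max}[u_0, F(s)] \leq \mathcal{M}^{\min}[u_0, F(t)]$ directly from the strict case of \lref{forcing_comparison}, is also a clean shortcut relative to the paper's invocation of \lref{comparison_to_maximal_min_mov}.

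However, Part (A) has a genuine gap: you assert that ``the same reasoning works for $u^{\min}$,'' i.e. that \lref{max_min_mov_stability} shows $\{u^{\min}(t_k)\}$ is a valid incremental scheme. But the $\mathcal{M}^{\min}$ clause of \lref{max_min_mov_stability} is stated for \emph{decreasing} forcing $F_0\geq F_1\geq F_2$, and there is no symmetric clause for $\mathcal{M}^{\min}$ with increasing forcing. The issue is real and not just one of citation: the lemma's proof relies on the free inclusion $\Omega(\mathcal{M}^{\max}[u_0,F_1])\subseteq\Omega(\mathcal{M}^{\max}[\mathcal{M}^{\max}[u_0,F_1],F_2])$, which has no analogue for $\mathcal{M}^{\min}$ under increasing forcing without strict monotonicity, which \lref{max_min_mov_stability} does not assume. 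To verify $u^{\min}(t_k)\in\mathcal{M}[u^{\min}(t_{k-1}),F(t_k)]$ you need a separate argument, as the paper gives: use strict monotonicity and \lref{forcing_comparison} to get $u^{\min}(t_{k-1})\leq u^{\min}(t_k)$ and $u^{\min}(t_{k-1})\leq\mathcal{M}^{\max}[u^{\min}(t_{k-1}),F(t_k)]$, then apply \lref{diff_ics_merge} to the pair $(u_0, u^{\min}(t_{k-1}))$ exactly as you do in Part (B). This fix costs nothing since you already have all the ingredients, but as written the claim that $\{u^{\min}(t_k)\}$ is a valid scheme is unjustified.
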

\begin{proof}
    That $u^{\max}$ yields a valid discrete scheme follows immediately from stability under refinement, Lemma \ref{lem:max_min_mov_stability}.
    
    To see that $u^{\min}$ yields a valid discrete scheme, we use Lemma \ref{lem:forcing_comparison} and the strict monotonicity of $F$ to get that $u^{\min}(t)$ is increasing in time. Then we note that
    \[ \Omega(u_0) \subseteq \Omega(u^{\min}(t_{k-1})) \subseteq \Omega(u^{\min}(t_k)) \cap \Omega(\mathcal{M}^{\max}[u^{\min}(t_{k-1}), F(t_k)]) \]
    where we use monotonicity of $u^{\min}$ as well as monotonicity of the movement from $u^{\min}(t_{k-1})$ since $F(t_{k-1}) < F(t_k)$. We can then apply Lemma \ref{l.diff_ics_merge} to conclude that the set of minimizers with respect to $u_0$ and $u^{\min}(t_{k-1})$ are the same, and thus $u^{\min}(t_k) = \mathcal{M}^{\min}[u^{\min}(t_{k-1}), F(t_k)]$.

    Having shown that both $u^{\min}$ and $u^{\max}$ are stable under refinement, we can return to Lemma \ref{lem:comparison_to_maximal_min_mov} and take the limit.
    \[ u^{\max}(t-) \leq u^{\min}(t) \leq u^{\max}(t) \]
    for all $t > 0$. This implies that $u^{\min}, u^{\max}$ have the same left and right limits.

    Finally, we check that a generic discrete scheme $u^k$ is contained between $u^{\min}$ and $u^{\max}$.  This holds at the first step, and inductively, if $u^{\min}(t_k) \leq u^k \leq u^{\max}(t_k)$, then \eqref{eq:ordered_ic_energy_minmax} implies that
    \[ \mathcal{M}^{\min}[u^{\min}(t_k), F(t_{k+1})] \leq u^{k+1} \leq \mathcal{M}^{\max}[u^{\max}(t_k), F(t_{k+1})] \]
    for any $u^{k+1}\in \mathcal{M}[u^k, F(t_{k+1})]$. Since we have already verified that $u^{\min}(t_{k+1}) = \mathcal{M}^{\min}[u^{\min}(t_k), F(t_{k+1})]$ and $\mathcal{M}^{\max}[u^{\max}(t_k), F(t_{k+1})] = u^{\max}(t_{k+1})$, we conclude in particular that
    \[ u_0 \leq u^k \leq u^{k+1} \leq u^{\max}(t_{k+1}) \]
    Lemma \ref{l.diff_ics_merge} then implies that $u^{k+1} \in \mathcal{M}[u_0, F(t_{k+1})]$.
\end{proof}

\subsection{Piecewise monotone forcing}

Having completed our analysis of monotone forcing, we now turn toward the piecewise monotone setting. The main complication this adds is that at the discrete level, the step across the monotonicity change has no ordering. As a result, unlike in the monotone case, we cannot hope for the piecewise monotone discrete scheme to coincide with its continuous time limit; there can be some error appearing beyond the first monotonicity change.

However, we do not completely lose control, since this nonmonotone step cannot travel too far energetically as the fineness of the partition tends to 0. The arguments in this section show that when we pass to the continuous time limit, we regain a characterization of the minimizing movements solution at each time in terms of finitely many minimization problems.

\begin{lemma}\label{lem:convergence_at_monotonicity_change}
Let $s_k, t_k$ be a sequence of times such that $s_k$ converges from above to a monotonicity change $s_0$ of $F$, and $t_k > s_k$ is contained in the subsequent monotone interval and converges to some $t$. Let $\Omega_k$ be a sequence of sets converging in $L^1(U)$ to some $\Omega_0$ which satisfies global stability for $F_0$.

Then for any $u_k \in \mathcal{M}[\Omega_k, F(s_k)]$ and $v_k\in \mathcal{M}[u_k, F(t_k)]$, and any subsequential limit $v_0$ of the $v_k$, we have $v_0\in \mathcal{M}[\Omega_0, F(t)]$.
\end{lemma}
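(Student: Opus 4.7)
The plan is a compactness-and-limits argument combined with the structural max-min lemmas of this section. I would extract subsequential limits $u_k\to u_*$ and $v_k\to v_0$, identify them as minimizers at the limiting data, and then transfer membership $v_0\in \mathcal{M}[u_*, F(t)]$ to $v_0\in \mathcal{M}[\Omega_0, F(t)]$ using the strict monotonicity of $F$ on $(s_0, t]$.

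First, the uniform Lipschitz estimate of Lemma \ref{lem:basic_estimates}(ii) together with the nondegeneracy bound (iii) makes $\{u_k\}, \{v_k\}$ precompact in the locally uniform topology and the corresponding indicator functions precompact in $L^1(U)$. Extract a (non-relabeled) subsequence so that $u_k\to u_*$ and $v_k\to v_0$ with this mode of convergence; by continuity of $F$ in time we also have $F(s_k)\to F_0$ and $F(t_k)\to F(t)$ in $H^{1/2}(\partial U)$. For any admissible competitor $u'$ with trace $F_0$, harmonic extension of the boundary discrepancy produces a corrected competitor $u'_k$ with trace $F(s_k)$ and $u'_k\to u'$ in $H^1(U)$. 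Passing to the limit in $\mathcal{E}[\Omega_k, u_k]\leq \mathcal{E}[\Omega_k, u'_k]$ via weak $H^1$-lower semicontinuity of $\mathcal{D}$ and strong $L^1$ convergence of positivity indicator functions (handling the volume and dissipation terms) yields $u_*\in \mathcal{M}[\Omega_0, F_0]$. The same argument applied to the minimization inequality for $v_k$ gives $v_0\in \mathcal{M}[u_*, F(t)]$.

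It remains to upgrade to $v_0\in \mathcal{M}[\Omega_0, F(t)]$. WLOG $F$ is increasing on $(s_0, t]$ (the decreasing case is symmetric), so $F_0 < F(t)$ strictly on $\partial U$. By Lemma \ref{lem:minmaxlim} applied to $u_0, u_*\in \mathcal{M}[\Omega_0, F_0]$, we have $u_0\vee u_*\in \mathcal{M}[\Omega_0, F_0]$, which is moreover stable at $F_0$ (i.e., $u_0\vee u_* \in \mathcal{M}[u_0\vee u_*, F_0]$) by the triangle inequality $\Diss[\Omega_0, \cdot]\leq \Diss[\Omega_0, u_0\vee u_*] + \Diss[u_0\vee u_*, \cdot]$. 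Combining stability with Lemma \ref{lem:forcing_comparison} and strict $F_0 < F(t)$,
\[
u_0\vee u_* \leq \mathcal{M}^{\max}[\Omega_0, F_0]\leq \mathcal{M}^{\min}[\Omega_0, F(t)],
\]
so every $w\in \mathcal{M}[\Omega_0, F(t)]$ satisfies $\Omega_0\cup \Omega(u_*)\subseteq \Omega(w)$. A direct calculation using additivity of $\Diss$ on the class $\mathcal{A} = \{v : \Omega(v)\supseteq \Omega_0\cup \Omega(u_*)\}$ shows
\[
\mathcal{E}[\Omega_0, v] - \mathcal{E}[u_*, v] = \mu_+(|\Omega(u_*)| - |\Omega_0|),
\]
a constant. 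Therefore $\mathcal{M}[\Omega_0, F(t)]\cap \mathcal{A} = \mathcal{M}[u_*, F(t)]\cap \mathcal{A}$, and since the full $\mathcal{M}[\Omega_0, F(t)]$ lies in $\mathcal{A}$, it suffices to prove $v_0\in \mathcal{A}$.

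The containment $\Omega(u_*)\subseteq \Omega(v_0)$ is immediate from $u_k\leq v_k$ (Lemma \ref{lem:forcing_comparison} with strict $F(s_k) < F(t_k)$) and passage to the limit. The main obstacle is $\Omega_0\subseteq \Omega(v_0)$. My approach is to apply \lref{diff_ics_merge} to the pair $(u_*, u_0\vee u_*)$ at forcing $F(t)$: the stability argument above already yields $\Omega_0\cup \Omega(u_*)\subseteq \Omega(\mathcal{M}^{\max}[u_0\vee u_*, F(t)])$, while the symmetric containment $\Omega_0\cup \Omega(u_*)\subseteq \Omega(\mathcal{M}^{\max}[u_*, F(t)])$ is obtained by applying \eqref{eq:ordered_ic_energy_minmax} to the ordered pair $u_*\leq u_0\vee u_*$ with forcing $F(t)$, which forces the maximal minimizers to align on the union of positivity sets. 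With both containments in hand, \lref{diff_ics_merge} identifies the two minimizer sets on $\mathcal{A}$ and yields $v_0\in \mathcal{M}[u_0\vee u_*, F(t)]$; then by stability of $u_0\vee u_*$ and strict monotonicity, $v_0\geq u_0\vee u_*\geq u_0$, hence $\Omega_0\subseteq \Omega(v_0)$. This verifies $v_0\in \mathcal{A}$ and completes the argument. The delicate verification of the second containment for $\mathcal{M}^{\max}[u_*, F(t)]$ is the principal technical hurdle in the proof.
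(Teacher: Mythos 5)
The first half of your argument — extracting convergent subsequences, showing $u_* \in \mathcal{M}[\Omega_0, F_0]$ via lower semicontinuity of $\mathcal{D}$, and then $v_0 \in \mathcal{M}[u_*, F(t)]$ by the same competitor argument applied to the second minimization — coincides with the paper's proof, which in fact stops precisely there (passing to $v_0\in \mathcal{M}[u_0, F(t)]$ with $u_0 = \lim u_k$). Your attempt to continue and upgrade to $v_0 \in \mathcal{M}[\Omega_0, F(t)]$ is a further step, and your general strategy via \lref{diff_ics_merge} and the constant-shift identity on the class $\mathcal{A} = \{v : \Omega_0\cup\Omega(u_*)\subseteq\Omega(v)\}$ is a sensible one.

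However, the step you flag as the ``principal technical hurdle'' does not actually go through. You assert that $\Omega_0\cup\Omega(u_*)\subseteq\Omega(\mathcal{M}^{\max}[u_*, F(t)])$ follows from applying \eqref{eq:ordered_ic_energy_minmax} to the ordered pair $u_*\leq u_0\vee u_*$. Tracing through that identity with the competitors $a = \mathcal{M}^{\max}[u_*, F(t)]$ and $b = \mathcal{M}^{\max}[u_0\vee u_*, F(t)]$, the conclusion is $a\wedge b\in\mathcal{M}[u_*, F(t)]$ and $a\vee b\in\mathcal{M}[u_0\vee u_*, F(t)]$; maximality of $b$ then gives $a\vee b\leq b$, i.e.\ $a\leq b$. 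That is the \emph{wrong} direction: it shows $\mathcal{M}^{\max}[u_*, F(t)]\leq\mathcal{M}^{\max}[u_0\vee u_*, F(t)]$, not that $\Omega_0\subseteq\Omega(a)$. Nothing in this line of reasoning forces the minimizer started from the smaller set $\Omega(u_*)$ to recover $\Omega_0\setminus\Omega(u_*)$ — and the scenario where $u_*$ has ``lost'' a component of $\Omega_0$ (as in Figure~\ref{f.mon-change-example}) is exactly the one where one should worry that $\mathcal{M}^{\max}[u_*, F(t)]$, and also $v_0$ itself, never regain it. Thus the hypotheses of \lref{diff_ics_merge} for the pair $(u_*, u_0\vee u_*)$ are not verified, the claimed identification of minimizer sets on $\mathcal{A}$ is not available, and $v_0\in\mathcal{A}$ is left unproven. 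Closing this hole would require additional structure that the lemma's hypotheses do not supply directly, such as a nesting relation between $\Omega(u_*)$ and $\Omega_0$ coming from how $\Omega_k$ and $F(s_k)$ approach the monotonicity change.
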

\begin{proof}
    We have already seen the main ingredients for this type of argument: by weak lower semicontinuity of the $H^1$ seminorm and Fatou's lemma applied to the measure terms, any subsequential limit of profiles relative to fixed initial data can only lose energy. Hence, by competitor arguments, any limit of minimizers for initial data converging in $L^1(U)$ and forcing converging in $H^{1/2}(\partial U)$ is a minimizer. In particular, we can apply this principle to $u_k$ in the statement of the lemma to get that any subsequential limit $u_0$ is in $\mathcal{M}[\Omega_0, F(s_0)]$.

    Thus, passing to convergent subsequences, we would like to show that $v_0\in \mathcal{M}[u_0, F(t_0)]$. Supposing otherwise, then we can take a profile in $\mathcal{M}[u_0, F(t_0)]$ with strictly less energy then $v_0$. Then, choose $k$ sufficiently large to ensure that $|\Omega_k \Delta \Omega_0|$ and $\|F(t_k) - F(t_0)\|_{H^{1/2}(\partial U)}$ are both much smaller than this energy gap, so that we can modify our profile into a competitor for $v_k$ relative to $\Omega_k$. This yields a contradiction, so we conclude.
    
\end{proof}

\begin{theorem}\label{thm:piecewise_monotone_1}
    Suppose $0 = t_0 < \dots < t_N = T$ are such that $F$ is strictly monotone on each $(t_i, t_{i+1})$, and $\Omega_0\in \mathcal{M}[\Omega_0, F(t_0)]$. If $u$ is a minimizing movements solution with forcing $F$ and initial data $\Omega_0$, then we have $u(t) \in \mathcal{M}[u(t_i), F(t)]$ for each $i$ and each $t\in [t_i, t_{i+1}]$.
\end{theorem}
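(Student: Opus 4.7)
The plan is to induct on $i$, using \tref{strictly_monotone_uniqueness} to handle the discrete scheme within each monotone subinterval and Lemma \ref{lem:convergence_at_monotonicity_change} to control the single ``rogue'' step of any discrete scheme that straddles a monotonicity change. For the base case $i = 0$, \tref{strictly_monotone_uniqueness} applied to $u_{\delta_k}$ gives $u_{\delta_k}(\tau) \in \mathcal{M}[\Omega_0, F(\tau)]$ at every partition time $\tau \in [t_0, t_1]$, and passing $\delta_k \to 0$ via stability of minimizers under $L^1$-convergence of initial data and $H^{1/2}$-convergence of forcing (the standard argument recorded at the opening of the proof of Lemma \ref{lem:convergence_at_monotonicity_change}) delivers $u(t) \in \mathcal{M}[u(t_0), F(t)]$ throughout $[t_0, t_1]$.

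For the inductive step, assume the claim on $[0, t_i]$, fix $u_{\delta_k} \to u$ pointwise along a subsequence, and for $k$ large let $\rho_k \leq t_i < \sigma_k$ be consecutive partition times, so $\rho_k, \sigma_k \to t_i$. Extracting a further subsequence, $u_{\delta_k}(\sigma_k) \to w$ pointwise for some profile $w$. The rogue step $u_{\delta_k}(\sigma_k) \in \mathcal{M}[u_{\delta_k}(\rho_k), F(\sigma_k)]$, combined with Lemma \ref{lem:convergence_at_monotonicity_change} (applied with $s_0 = t_i$, $s_k = \sigma_k$, and $\Omega_k = \Omega(u_{\delta_k}(\rho_k)) \to \Omega(u(t_i))$ in $L^1$ via the uniform Lipschitz and nondegeneracy estimates of Lemma \ref{lem:basic_estimates}), gives $w \in \mathcal{M}[u(t_i), F(t_i)]$. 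Since $F$ is strictly monotone on $[\sigma_k, t_{i+1}]$, \tref{strictly_monotone_uniqueness} applied to the tail of the discrete scheme (now with ``initial data'' $u_{\delta_k}(\sigma_k)$) yields $u_{\delta_k}(\tau) \in \mathcal{M}[u_{\delta_k}(\sigma_k), F(\tau)]$ at every partition time $\tau$ in this subinterval, and taking $\delta_k \to 0$ produces $u(t) \in \mathcal{M}[w, F(t)]$ for all $t \in (t_i, t_{i+1}]$; the case $t = t_i$ is the stability statement of Lemma \ref{lem:basic_estimates}(v).

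The final task is to upgrade $u(t) \in \mathcal{M}[w, F(t)]$ to $u(t) \in \mathcal{M}[u(t_i), F(t)]$ via \lref{diff_ics_merge}. Both $u(t_i)$ and $w$ lie in $\mathcal{M}[u(t_i), F(t_i)]$, so by Lemma \ref{lem:minmaxlim} their pointwise maximum $u(t_i) \vee w$ (in the increasing case) or minimum $u(t_i) \wedge w$ (in the decreasing case) also lies there and is thus itself a stable profile at $F(t_i)$. In the increasing case, the strict inequality $F(t_i) < F(t)$ together with Lemma \ref{lem:forcing_comparison} produces the chain $u(t_i) \vee w \leq \mathcal{M}^{\max}[u(t_i), F(t_i)] \leq \mathcal{M}^{\min}[u(t_i), F(t)]$, and a parallel chain using the stability of $u(t_i) \vee w$ at $F(t_i)$ shows $u(t_i) \vee w \leq \mathcal{M}^{\min}[u(t_i) \vee w, F(t)]$. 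A bootstrap application of \lref{diff_ics_merge} (to the pair $(w, u(t_i) \vee w)$, for which the ordering $w \leq u(t_i) \vee w$ makes the hypothesis directly verifiable) then identifies $u(t) \in \mathcal{M}[w, F(t)]$ satisfying $u(t) \geq u(t_i) \vee w$ as an element of $\mathcal{M}[u(t_i) \vee w, F(t)]$, and a second such application to the pair $(u(t_i), u(t_i) \vee w)$ concludes $u(t) \in \mathcal{M}[u(t_i), F(t)]$. The decreasing case is symmetric, using pointwise minimums and the second (decreasing) part of \lref{diff_ics_merge}.

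The principal obstacle is the final conversion: $u(t_i)$ and $w$ are a priori unordered in $\mathcal{M}[u(t_i), F(t_i)]$, which blocks a direct application of \lref{diff_ics_merge} to that pair. The remedy is to route through the intermediate stable profile $u(t_i) \vee w$ (or $\wedge$), which is ordered with respect to both and whose stability at $F(t_i)$ propagates forward under Lemma \ref{lem:forcing_comparison} thanks to the strict monotonicity of $F$ past $t_i$, enabling two applications of \lref{diff_ics_merge} on ordered pairs to chain the minimization from $w$ back to the minimization from $u(t_i)$.
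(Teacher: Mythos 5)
Your overall architecture is reasonable --- induction over monotone subintervals, \tref{strictly_monotone_uniqueness} within each one, and \lref{convergence_at_monotonicity_change} for the step straddling a monotonicity change --- but there is a genuine gap in the final paragraph, and it stems from a suboptimal way of invoking \lref{convergence_at_monotonicity_change}.

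You apply \lref{convergence_at_monotonicity_change} only with $t_k \to t_i$, extracting the limit $w$ of the rogue step, and then separately push the monotone evolution forward from $w$, landing on $u(t) \in \mathcal{M}[w, F(t)]$. This forces you to convert $\mathcal{M}[w, F(t)]$ into $\mathcal{M}[u(t_i), F(t)]$ by hand, and that conversion is where the argument breaks. To apply \lref{diff_ics_merge} to the pair $(w, u(t_i)\vee w)$ at forcing $F(t)$, the hypothesis requires both $\Omega(u(t_i)\vee w) \subseteq \Omega\bigl(\mathcal{M}^{\max}[u(t_i)\vee w, F(t)]\bigr)$ (fine, by self-stability of $u(t_i)\vee w$ at $F(t_i)$ and monotone forcing comparison) \emph{and} $\Omega(u(t_i)\vee w) \subseteq \Omega\bigl(\mathcal{M}^{\max}[w, F(t)]\bigr)$. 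The latter is not established: what you have is $w \leq \mathcal{M}^{\max}[w, F(t)]$, but you need $u(t_i) \leq \mathcal{M}^{\max}[w, F(t)]$ as well. Since $u(t_i)$ and $w$ are merely two elements of $\mathcal{M}[u(t_i), F(t_i)]$ --- ordered componentwise but not globally --- there is no reason for this. Indeed a direct computation (equality in the dissipation triangle inequality) shows $u(t_i)\vee w \in \mathcal{M}[w, F(t_i)]$ if and only if $\Omega(u(t_i)) \subseteq \Omega(w)$, which you do not have. The remark that ``the ordering $w \leq u(t_i)\vee w$ makes the hypothesis directly verifiable'' only simplifies the union on the left-hand side; it does nothing for this containment. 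The same issue afflicts the next step: the claim $u(t) \geq u(t_i)\vee w$ also requires $u(t) \geq u(t_i)$, which again is not available from $u(t) \in \mathcal{M}[w, F(t)]$ alone.

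The repair is to not introduce $w$ at all. \lref{convergence_at_monotonicity_change} is stated so that its conclusion is $v_0 \in \mathcal{M}[\Omega_0, F(t)]$ --- that is, it refers back to the \emph{pre}-rogue-step state $\Omega_0$, not to the limit $u_0$ of the rogue step. So apply it once, with $\Omega_k = \Omega(u_{\delta_k}(\rho_k))$ converging to $\Omega(u(t_i))$, $s_k = \sigma_k$ the first partition time past $t_i$, $u_k = u_{\delta_k}(\sigma_k)$, and $t_k$ an arbitrary partition time converging to $t \in (t_i, t_{i+1}]$; \tref{strictly_monotone_uniqueness} within the monotone subinterval guarantees $v_k := u_{\delta_k}(t_k) \in \mathcal{M}[u_k, F(t_k)]$, so the hypotheses are met, and the conclusion reads $u(t) \in \mathcal{M}[u(t_i), F(t)]$ directly. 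This is the paper's route, and it bypasses the $\mathcal{M}[w, F(t)] \to \mathcal{M}[u(t_i), F(t)]$ conversion entirely.
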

\begin{proof}
    Note that the claim holds in $[t_0, t_1]$ by applying Theorem \ref{thm:strictly_monotone_uniqueness} to the monotone minimizing movements. We can additionally note that if $t_\delta$ is the last time step in $[t_0, t_1]$ of a $\delta$-fine discrete scheme, then the profile at that step converges to a profile in $\mathcal{M}[u(t_0), F(t_1)]$ as $\delta\to 0$.

    Let us assume that this second property holds inductively; that the sequence of last discrete steps before $t_i$ converges to a minimizer with forcing $F(t_i)$ relative to initial data $\Omega(u(t_{i-1}))$. Then, looking into the next monotone interval, we apply the lemma, with $\Omega_k$ being the profile of this last discrete step, and $s_k$ being the time of the first step into the next monotone interval. Then the conclusion of Lemma \ref{lem:convergence_at_monotonicity_change} proves the theorem on $[t_i, t_{i+1}]$, and gives us the inductive step via the convergence of the next sequence of last discrete steps.
    
\end{proof}

\begin{proposition}
Suppose $0 = t_0 < \dots < t_N = T$ are such that $F$ is strictly monotone on each $(t_i, t_{i+1})$, and $\Omega_0\in \mathcal{M}[\Omega_0, F(t_0)]$. Any $u(t)$ chosen by
\[ \begin{cases}
u(t)\in \mathcal{M}[u(t_i-), F(t)] & t_i < t \leq t_{i+1}
\end{cases} \]
is a minimizing movements solution. In light of Theorem \ref{thm:piecewise_monotone_1}, this characterizes all minimizing movement solutions.
\end{proposition}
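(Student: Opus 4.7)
The plan is to realize the given $u(t)$ as a pointwise limit of minimizing movements discrete schemes on partitions with vanishing mesh; by \dref{mmsln} this will show $u$ is a minimizing movements solution.

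The central step is a compatibility claim: for any two times $s<t$ arising as consecutive partition nodes, the chosen value $u(t)$ lies in $\mathcal{M}[u(s), F(t)]$. When $s,t$ lie in a common monotone interval $(t_i, t_{i+1}]$, I would apply \lref{diff_ics_merge} with initial data $u(s)$ and $u(t_i-)$: strict monotonicity of $F$ together with Lemma \ref{lem:forcing_comparison} forces the maximum minimizers at forcing $F(t)$ from either base point to dominate $\Omega(u(s))\cup\Omega(u(t_i-))$, which places $u(t)$ in the common set of minimizers over the two initial conditions. When $s<t_i<t$ straddles a monotonicity change, an analogous application of \lref{diff_ics_merge}, bridging through the stable profile $u(t_i-)$ and the limit at $t_i$ provided by Lemma \ref{lem:convergence_at_monotonicity_change}, gives $u(t_i) \in \mathcal{M}[u(s), F(t_i)]$ and $u(t) \in \mathcal{M}[u(t_i), F(t)]$, whence the two steps compose to produce $u(t) \in \mathcal{M}[u(s), F(t)]$ by a final application of \lref{diff_ics_merge}.

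With compatibility in hand, I would construct discrete schemes as follows: for each $\delta > 0$, take a partition $P_\delta$ of mesh less than $\delta$ containing every $t_i$ together with additional samples whose union over a sequence $\delta \to 0$ accumulates densely in $[0,T]$, and set $u^{t_k}_\delta := u(t_k)$. Compatibility makes this a valid minimizing movements scheme. Pointwise convergence $u_\delta \to u$ at continuity points of $u$ follows from density combined with the uniform Lipschitz and nondegeneracy estimates of Lemma \ref{lem:basic_estimates}; at the countably many jump times one may include or omit the jump time from $P_\delta$ appropriately so that the discontinuous interpolation converges to the desired one-sided limit of $u$ there.

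The main obstacle is the compatibility at the non-monotone discrete step from $u^s_\delta$ (with $s$ just before $t_i$) to $u^{t_i}_\delta = u(t_i)$, since the change in direction of $F$ at $t_i$ does not provide an ordering between initial data and step. The saving point is that \lref{diff_ics_merge} requires only a common maximum-minimizer domination of the two initial positivity sets, not a monotone ordering of the initial data themselves; this domination persists at $t_i$ by continuity of $F$ in time and by the self-stability of $u(t_i-)$ guaranteed by Lemma \ref{lem:basic_estimates}.
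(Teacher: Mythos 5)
Your overall plan — build a discrete scheme that samples $u$ exactly at partition nodes, include every monotonicity change $t_i$ in every partition, and absorb the countably many jump times into the partitions as $\delta\to 0$ — is essentially the same as the paper's proof, which also forces the $t_i$ and jump times into the partitions and invokes the stability-under-refinement property (Lemma \ref{lem:max_min_mov_stability} together with Theorem \ref{thm:strictly_monotone_uniqueness}) to match $u$ at the nodes. Your verification of compatibility within a single monotone interval via Lemma \ref{l.diff_ics_merge} and Lemma \ref{lem:forcing_comparison} is the correct (if terse) argument and is in the spirit of how Lemma \ref{lem:max_min_mov_stability} is proved.

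However, your discussion of the case $s < t_i < t$ contains a genuine error. You assert that $u(t_i)\in\mathcal{M}[u(s), F(t_i)]$ and $u(t)\in\mathcal{M}[u(t_i), F(t)]$ ``compose to produce $u(t)\in\mathcal{M}[u(s), F(t)]$ by a final application of \lref{diff_ics_merge}.'' This composition is false in general, and if it were true, the entire branching phenomenon at monotonicity changes (Figure \ref{f.mon-change-example}, Remark on non-uniqueness) would disappear: a partition that skips $t_i$ would then produce the same limit as one that includes it, so the choice of $u(t_i)\in\mathcal{M}[u(t_i-),F(t_i)]$ could not matter. Concretely, if $F$ increases on $(t_{i-1},t_i)$ and decreases on $(t_i, t_{i+1})$, and there is a jump up at $t_i$, then for $t$ slightly past $t_i$ the hysteretic pinning keeps $u(t)$ near $u(t_i)$, while $\mathcal{M}[u(s),F(t)]$ for $s$ slightly before $t_i$ with $F(s)\approx F(t)$ stays near $u(s)$ — these are different. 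The hypothesis of Lemma \ref{l.diff_ics_merge} (that both minimizers dominate, or are dominated by, both initial positivity sets) is precisely what fails here, because $\Omega(\mathcal{M}^{\min}[u(t_i),F(t)])$ need not be contained in $\Omega(u(s))$. You are saved only because your partitions are constructed to always contain the $t_i$, so the case $s<t_i<t$ with $s,t$ consecutive nodes never arises; but as written, your proof presents this false composition as a proved fact, which should be excised. Relatedly, your last paragraph worries about a ``non-monotone discrete step from $u^s_\delta$ to $u^{t_i}_\delta$''; that step is in fact monotone ($F(s)$ and $F(t_i)$ are comparable since both lie in $[t_{i-1},t_i]$), so the worry is misplaced — the only subtlety at $t_i$ is the \emph{choice} within $\mathcal{M}[u(t_i-),F(t_i)]$, which is handled by putting $t_i$ in the partition and setting $u^{t_i}_\delta = u(t_i)$ directly.

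Two smaller remarks: (1) at a jump time $\tau$ inside a monotone interval the target is $u(\tau)$ itself, which may lie strictly between $u(\tau-)$ and $u(\tau+)$; so you should include $\tau$ in all but finitely many partitions so that $u_\delta(\tau)=u(\tau)$ eventually, rather than aiming at a ``one-sided limit''. (2) Your appeal to ``density plus Lipschitz/nondegeneracy'' for convergence at continuity points is in the right direction but should be routed through the uniqueness statement of Theorem \ref{thm:strictly_monotone_uniqueness}: within each monotone interval the limit of the discrete scheme is determined by $u(t_i)$ and agrees with $\mathcal{M}[u(t_i),F(t)]$ at non-jump times, which is exactly $u(t)$.
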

\begin{proof}
We construct a system of partitions such that the monotonicity changes appear in every partition, and any time where $u(t)$ jumps within a monotone interval appears in all but finitely many partitions (since we do not rule out the possibility of countably many jumps). Additional times may be added arbitrarily to obtain $\delta$-fine partitions.

Now, we can choose the values of the discrete scheme to exactly coincide with the desired limit $u(t)$. This is relying on the stability under refinement property of the monotone minimizing movements given by Lemma \ref{lem:max_min_mov_stability}, which ensures that if $u(t)\in \mathcal{M}[u(t_i), F(t)]$ for $t\in [t_i, t_{i+1}]$, then also $u(t)$ can be obtained from any minimizing discrete scheme with strictly monotone forcing starting from $u(t_i)$ and ending at $F(t)$. In particular, since we have the monotonicity change times in all partitions, we can inductively apply the lemma to ensure the value at the next monotonicity change is chosen accordingly and the next monotone interval will be handled correctly.

Now, given such a discrete scheme, it is clear that at jumps and monotonicity changes for $u(t)$, the discrete scheme is eventually constant as $\delta\to 0$, so $u(t)$ is the limit of the discrete scheme. On the other hand, from Theorem \ref{thm:strictly_monotone_uniqueness} the jump times of the limit of the discrete scheme on $[t_i, t_{i+1})$ are determined by $u(t_i)$, and the limit is the unique element of $\mathcal{M}[u(t_i), F(t)]$ for non-jump times, so we have shown that the discrete scheme converges to $u(t)$.
\end{proof}

\begin{remark}
    It is not clear whether all of the possible minimizing movements solutions can be created from partitions avoiding monotonicity changes. A somewhat related question is whether the situation of more than two elements in $\mathcal{M}[\Omega, F]$ can occur nontrivially, or if it is always the result of independent merging events occuring at the same time.
\end{remark}

Finally, though we have completely characterized minimizing movements solutions for the piecewise monotone case, it is reasonable to ask whether the different solutions found represent genuinely different branches. We answer this affirmatively by showing that after jumping at a monotonicity change, the solution stays near the jump profile. On the other hand, in view of Proposition \ref{l.diff_ics_merge}, if there is enough change in forcing, different branches may merge again at a later time.

\begin{proposition}\label{prop:value_at_monotonicity_change}
Let $u$ be as in the previous statement. Then $u$ is right continuous at each monotonicity change.
\end{proposition}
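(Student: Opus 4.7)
Without loss of generality I would treat the case where $F$ is strictly increasing on $(t_i,t_{i+1})$; the decreasing case is symmetric. By the preceding proposition, $u(t)\in\mathcal{M}[u(t_i-),F(t)]$ for $t\in(t_i,t_{i+1}]$, and Lemma \ref{lem:forcing_comparison} together with strict monotonicity of $F$ makes $u(t)$ monotonically increasing on this interval. Combined with the uniform Lipschitz and non-degeneracy estimates of Lemma \ref{lem:basic_estimates}, this yields existence of the uniform right limit $u(t_i+):=\lim_{t\to t_i^+}u(t)$.

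I would then identify $u(t_i+)$ explicitly. By Lemma \ref{lem:convergence_at_monotonicity_change} with constant initial data $\Omega(u(t_i-))$ and approach times $t_k\searrow t_i$, the right limit satisfies $u(t_i+)\in\mathcal{M}[u(t_i-),F(t_i)]$. On the other hand, the strict inequality $F(t)>F(t_i)$ for $t>t_i$ combined with the strict half of Lemma \ref{lem:forcing_comparison} gives $u(t)\geq\mathcal{M}^{\max}[u(t_i-),F(t_i)]$ pointwise for every $t>t_i$, so passing to the right limit yields $u(t_i+)\geq\mathcal{M}^{\max}[u(t_i-),F(t_i)]$. Membership in the minimizer set forces equality: $u(t_i+)=\mathcal{M}^{\max}[u(t_i-),F(t_i)]$.

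The remaining and main step is to show that $u(t_i)$ itself equals this same quantity. My plan is to rerun the preceding argument at the discrete level: working with approximating partitions that contain $t_i$, the MM step immediately after $t_i$ satisfies $u_\delta(t_{k+1}^\delta)\in\mathcal{M}[u_\delta(t_i),F(t_{k+1}^\delta)]$ with $F(t_{k+1}^\delta)>F(t_i)$, and the strict version of Lemma \ref{lem:forcing_comparison} gives $u_\delta(t_{k+1}^\delta)\geq\mathcal{M}^{\max}[u_\delta(t_i),F(t_i)]$. Passing to a diagonal subsequence with $\delta\to 0$ and using $u_\delta(t_i)\to u(t_i)$, $u_\delta(t_{k+1}^\delta)\to u(t_i+)$, I intend to obtain $u(t_i+)\geq\mathcal{M}^{\max}[u(t_i),F(t_i)]\geq u(t_i)$. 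Since both $u(t_i)$ and $u(t_i+)$ then lie in $\mathcal{M}[u(t_i-),F(t_i)]$ with $u(t_i)\leq u(t_i+)$, applying \lref{diff_ics_merge} to the pair $(u(t_i),u(t_i+))$ should collapse the chain to $u(t_i)=u(t_i+)$.

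The main obstacle is this last step: executing the diagonal limit $\delta\to 0$ together with $t_{k+1}^\delta\to t_i^+$ rigorously, and verifying the exact containment hypotheses of \lref{diff_ics_merge} at the candidate pair $(u(t_i),u(t_i+))$. The possible multiplicity of minimizers in $\mathcal{M}[u(t_i-),F(t_i)]$ leaves room a priori for the discrete scheme to accumulate at a non-maximal element, so a stability-under-refinement argument in the spirit of Lemma \ref{lem:max_min_mov_stability} is the natural tool to rule this out.
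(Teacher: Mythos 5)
Your approach goes astray in the very first identification, and the bug is visible without even reaching the discrete-level ``main step.''

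You claim $u(t_i+)=\mathcal{M}^{\max}[u(t_i-),F(t_i)]$, deriving the lower bound from $u(t)\in\mathcal{M}[u(t_i-),F(t)]$ for $t>t_i$ (reading Proposition 3.9 literally) and the upper bound from $u(t_i+)\in\mathcal{M}[u(t_i-),F(t_i)]$. But applying the mirror of this reasoning on the approach interval $(t_{i-1},t_i)$ -- where $F$ decreases to the local minimum $F(t_i)$ -- yields, via the same forcing-comparison plus global stability of the left limit, that $u(t_i-)=\mathcal{M}^{\max}[u(t_i-),F(t_i)]$ as well. So your identification forces $u(t_i-)=u(t_i+)$, i.e.\ \emph{continuity} (no jump) at every monotonicity change. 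That is false: Figure~\ref{f.mon-change-example} exhibits a jump exactly at $t_1$. The error is that for $t>t_i$ the correct characterization, supplied by Theorem~\ref{thm:piecewise_monotone_1} and used in the paper's own proof, is $u(t)\in\mathcal{M}[u(t_i),F(t)]$, with base $u(t_i)$, not $u(t_i-)$. (The $u(t_i-)$ in the displayed formula of Proposition 3.9 is inconsistent with both Theorem~\ref{thm:piecewise_monotone_1} and the statement of Theorem~\ref{t.main}, and with the proof of Proposition 3.9 itself, which works with $u(t_i)$ throughout.) With the correct base, forcing comparison gives only $u(t_i+)=\mathcal{M}^{\max}[u(t_i),F(t_i)]\geq u(t_i)$, which is the easy direction.

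The discrete diagonal argument you sketch as the ``main step'' then cannot rescue the proof, because it produces the same one-sided inequality. Your $u_\delta(t^\delta_{k+1})\geq\mathcal{M}^{\max}[u_\delta(t_i),F(t_i)]$ passes in the limit to $u(t_i+)\geq\mathcal{M}^{\max}[u(t_i),F(t_i)]\geq u(t_i)$, which you already had. Nothing in the argument produces $u(t_i+)\leq u(t_i)$. The appeal to Lemma~\ref{l.diff_ics_merge} at the end does not supply this either: that lemma identifies the sets $\mathcal{M}[u_0,F]$ and $\mathcal{M}[v_0,F]$ (or their extremal elements) under inclusion hypotheses on the bases; it is not a device for collapsing an ordered pair of minimizers $u(t_i)\leq u(t_i+)$ to equality.

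What actually closes the gap, and what you are missing, are two facts about $u(t_i-)$. First, the dissipation triangle inequality $\Diss(u(t_i-),u')\leq\Diss(u(t_i-),u(t_i))+\Diss(u(t_i),u')$, combined with $u(t_i)\in\mathcal{M}[u(t_i-),F(t_i)]$, shows $\mathcal{M}[u(t_i),F(t_i)]\subseteq\mathcal{M}[u(t_i-),F(t_i)]$, hence $u(t_i+)=\mathcal{M}^{\max}[u(t_i),F(t_i)]\leq\mathcal{M}^{\max}[u(t_i-),F(t_i)]=u(t_i-)$. Second, the ordering-per-component property of energy solutions recorded in Lemma~\ref{lem:basic_estimates}(v) places $u(t_i)$ between $u(t_i-)$ and $u(t_i+)$ on each component; with $u(t_i+)\leq u(t_i-)$ this reads $u(t_i+)\leq u(t_i)$. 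Together with your $u(t_i)\leq u(t_i+)$, this gives $u(t_i)=u(t_i+)$. The paper's proof is entirely continuous-time -- no discrete scheme, no diagonal argument -- and this is genuinely simpler than what you propose. If you want to salvage your route, replace $u(t_i-)$ by $u(t_i)$ in the forcing-comparison step and then bring in the triangle inequality and the between-ness of $u(t_i)$; the discrete argument is unnecessary.
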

\begin{proof}
Let us restrict without loss of generality to the case where $t_i$ is a local maximum. We have $u(t_i)\in \mathcal{M}[u(t_i-), F(t_i)]$ and $u(t_i+)\in \mathcal{M}[u(t_i), F(t_i)]\cap \mathcal{M}[u(t_i-), F(t_i)]$. Also, as a property of energy solutions, we know that $u(t_i)$ is between $u(t_i-)$ and $u(t_i+)$ on each connected component.

By Theorem \ref{thm:piecewise_monotone_1}, $u(t)$ minimizes energy with respect to the profile of the previous monotonicity change. Combining this with the comparison results of Lemma \ref{lem:forcing_comparison}, we get that $u(t) \leq \mathcal{M}^{\min}[u(t), F(t_i)]$ for $t\in [t_{i-1}, t_i]$ and $u(t) \leq \mathcal{M}^{\min}[u(t_i), F(t_i)]$ for $t\in [t_i, t_{i+1}]$. Passing to the limit, we must have $u(t_i-) = \mathcal{M}^{\min}[u(t_i-), F(t_i)] \leq u(t_i)$ and $u(t_i+) = \mathcal{M}^{\min}[u(t_i), F(t_i)] \leq u(t_i)$. Note that the dissipation triangle inequality implies that for any $u'$, we have $\Diss(u(t_i-), u') \leq \Diss(u(t_i-), u(t_i)) + \Diss(u(t_i), u')$; in particular, since $u(t_i)\in \mathcal{M}[u(t_i-), F(t_i)]$, any minimizer in $\mathcal{M}[u(t_i), F(t_i)]$ is a minimizer in $\mathcal{M}[u(t_i-), F(t_i)]$. It follows directly that 
\[\mathcal{M}^{\min}[u(t_i-), F(t_i)] \leq \mathcal{M}^{\min}[u(t_i), F(t_i)],\]
and $u(t_i-) \leq u(t_i+)$. Then the ordering per component implies that $u(t_i-) \leq u(t_i) \leq u(t_i+)$, so $u(t_i) = u(t_i+)$.

\end{proof}

\appendix

\section{Regularity of energy solutions with \texorpdfstring{$x$}{x}-dependent forcing}\label{a.energy-solution-properties}
In this appendix we establish the basic theory of the Dirichlet-driven rate-independent motion when the Dirichlet forcing has $x$-dependence.
\begin{proposition}\label{p.energy-solutions-temporal-reg}
    Let $u$ be an energy solution on $[0, T]$ with forcing $F(x, t)$ uniformly bounded away from 0. Then \[\one_{\Omega(u(t))}\in \mathrm{BV}([0, T]; L^1(\R^d))\]
    and
    \[ \mathcal{J}[u(t)] \in \mathrm{BV}([0,T]; \R). \]
\end{proposition}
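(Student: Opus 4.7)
The plan is to apply the energy dissipation inequality across a partition of $[0,T]$ and extract uniform bounds on both the total dissipation and the total variation of $\mathcal{J}[u(t)]$, following the strategy of \cite{FeldmanKimPozar}*{Lemma 5.7} with modifications to handle the $x$-dependence of $F$.

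First I establish two uniform-in-time estimates that will be used repeatedly. By \lref{basic_estimates} and \lref{domain_size_estimate}, the assumptions on $F$ give $\mathcal{J}[u(t)] \leq C$ and $\mathrm{dist}(\partial \Omega(u(t)),\partial U) \geq \delta > 0$ uniformly in $t$. Next I control the flux term. By global stability (see \lref{basic_estimates}) $u(t)$ is harmonic in $\Omega(u(t))$, and $u(t) = 0$ on the interior free boundary, so an integration by parts gives
\[
\int_{\partial U} F(x,t)\,\partial_\nu u(x,t)\,dx \;=\; \int_U |\nabla u(t)|^2\,dx \;=\; \mathcal{D}[u(t)],
\]
which is bounded uniformly in $t$. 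Thus the flux term in the dissipation inequality integrated over $[0,T]$ is bounded by a constant independent of the partition.

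Now for any partition $0=s_0<s_1<\dots<s_N=T$, I apply the energy dissipation inequality on each $[s_i,s_{i+1}]$ and sum. The energy differences telescope and the flux term is globally bounded, giving
\[
\sum_{i} \Diss[u(s_i), u(s_{i+1})] \;\leq\; \mathcal{J}[u(0)] - \mathcal{J}[u(T)] + 2\int_0^T \mathcal{D}[u(t)]\,dt \;\leq\; C,
\]
uniformly in the partition. Since $\Diss[V_0,V_1] \geq \min(\mu_-,\mu_+) \|\one_{V_0}-\one_{V_1}\|_{L^1}$, this gives the first conclusion, $\one_{\Omega(u(t))} \in \mathrm{BV}([0,T]; L^1(\R^d))$.

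For the BV regularity of $\mathcal{J}[u(t)]$, I test global stability symmetrically. At time $s$, I use the competitor $v_{s,t} := (u(t) + \eta_{s,t})_+$, where $\eta_{s,t}$ is the harmonic extension of $F(s) - F(t)$ to $U$; this competitor has the correct boundary trace $F(s)$. Combined with the analogous inequality exchanging $s \leftrightarrow t$, the Dirichlet expansion controls the energy difference by
\[
|\mathcal{J}[u(s)] - \mathcal{J}[u(t)]| \;\leq\; \Diss[u(s),u(t)] + C\,\|F(s)-F(t)\|_{H^{1/2}(\partial U)}^2 + C\,\|F(s)-F(t)\|_{H^{1/2}}.
\]
Summing over a partition and invoking $F \in \mathrm{BV}([0,T]; H^{1/2}(\partial U))$ together with the indicator BV bound just established yields $\mathcal{J}[u(t)] \in \mathrm{BV}([0,T]; \R)$. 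The main technical obstacle is in this last step: with $x$-dependent forcing the lift $\eta_{s,t}$ need not be sign-definite, so the positivity set of the competitor may differ from that of $u(t)$. The uniform positivity $F \geq \varepsilon_0$ and the uniform separation $\mathrm{dist}(\partial \Omega(u(t)),\partial U) \geq \delta$ ensure that for small $\|F(s)-F(t)\|_{H^{1/2}}$ the lift only disturbs $u$ in a neighborhood of $\partial U$ where $u > 0$ is uniformly bounded below, so the positivity-set change and the associated volume error are controlled by $\|F(s)-F(t)\|_{H^{1/2}}$.
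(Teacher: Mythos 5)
Your strategy mirrors the paper's in outline (telescope the energy dissipation inequality for the indicator bound; perturb competitors by a lift of the boundary data for the energy bound), but both halves have gaps as written.

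For the indicator BV bound: you compute $\int_{\partial U} F\,\partial_\nu u = \mathcal{D}[u]$ and conclude the flux term is bounded uniformly. That identity is correct, but the flux term in the energy dissipation inequality should involve $\dot F$, not $F$ --- the displayed definition has a typo, as confirmed both by the paper's own proof, which works with $Q(t) = \langle 2\dot F(t), \partial_\nu u\rangle_{L^2(\partial U)}$, and by the discrete analogue $2\int_{\partial U}(F(t_k)-F(t_{k-1}))\partial_\nu u^{t_{k-1}}_\delta$ in the proof of Theorem \ref{t.mm-energy}. With the correct flux your identity does not apply directly, and the bound requires extra input: either the paper's Gr\"onwall argument --- write $\dot F\,\partial_\nu u = (\partial_t\log F)\cdot F\,\partial_\nu u$, use the pointwise nonnegativity of $F\,\partial_\nu u$ to get $Q(t) \leq 2\|[\partial_t\log F]_+\|_{L^\infty}\mathcal{D}(t)$, and close the loop via $\mathcal{D}(t) \leq \mathcal{J}(u_0) + \int_0^t Q$ --- or an $H^{1/2}$--$H^{-1/2}$ duality pairing of $\dot F$ against $\partial_\nu u$ combined with the a priori uniform bound on $\mathcal{D}(t)$ from Lemma \ref{lem:domain_size_estimate}. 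As written, the flux estimate does not go through.

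For the BV bound on $\mathcal{J}$: your competitor is $(u(t)+\eta_{s,t})_+$ with $\eta_{s,t}$ the full harmonic extension of $F(s)-F(t)$ to $U$. This function is nonzero throughout $U$, not only near $\partial U$, so the assertion that ``the lift only disturbs $u$ in a neighborhood of $\partial U$'' is false; near the free boundary, where $u(t)$ vanishes, adding $\eta_{s,t}$ genuinely changes the positivity set. Controlling $|\Omega((u(t)+\eta_{s,t})_+)\,\triangle\,\Omega(u(t))|$ by $O(\|F(s)-F(t)\|)$ would require invoking linear nondegeneracy and perimeter bounds at the free boundary, and even then you would need $L^\infty$ smallness of $\eta_{s,t}$ whereas you only control an $H^{1/2}(\partial U)$ norm; these are not comparable when $d-1>1$. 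The paper's fix is \eqref{eq:additive_boundary_data_corrector}: the lift $h^{s,t}$ is harmonic only on a fixed collar $\omega\subset U$ on which every globally stable profile exceeds a constant $\varepsilon_1>0$, and is set to zero outside $\omega$. Once $\|F(s)-F(t)\|_{L^\infty(\partial U)}<\varepsilon_1$ (guaranteed for $|s-t|$ small by the $C([0,T];C(\partial U))$ regularity in Assumption \ref{assumptions}(ii)), the competitor $u(t)+h^{s,t}$ has \emph{exactly} the same positivity set as $u(t)$, so there is no positivity-set drift to estimate and the dissipation term is unchanged. This truncation is the essential ingredient you are missing.
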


We will establish the proposition in two steps which we will write as Lemmas.  First we show the $\textup{BV}_t$ estimate of the positivity sets $\Omega(u(t))$ via a typical Gr\"onwall argument with the energy dissipation inequality, see \lref{set-time-BV-reg}.  Then we show the $\textup{BV}_t$ estimate of the energy using global stability in \lref{energy-time-BV-reg}.  The $x$-dependence of $F(t,x)$ presents some additional technical difficulties over the case of $F(t)$ considered in \cite{FeldmanKimPozar}*{Lemma 5.7}.

\begin{lemma}\label{l.set-time-BV-reg}
    Suppose $u$ is an energy solution on $[0, T]$. Then $\one_{\Omega(u(t))}$ is in $ \textup{BV}([0, T]; L^1)$ with the estimate
    \begin{equation}
        [\one_{\Omega(u(t))}]_{\textup{BV}([0,T], L^1)} \leq \frac{1}{\mu_+\wedge \mu_-}\mathcal{J}(u(0))\exp(\|\partial_t (\log F)_+\|_{L^1([0, T]; L^\infty(\partial U))})
    \end{equation}
\end{lemma}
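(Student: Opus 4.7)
The strategy is to reduce the claim to a bound on the total dissipation and then close via a Gronwall argument on the energy, powered by the energy dissipation inequality and global stability tested against a multiplicative rescaling of $u(s)$.

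\emph{Step 1 (Reduction to total dissipation).} Since
\[
(\mu_+ \wedge \mu_-)|\Omega_1 \triangle \Omega_2| \leq \mu_+|\Omega_2\setminus \Omega_1| + \mu_-|\Omega_1\setminus \Omega_2| = \Diss[\Omega_1,\Omega_2],
\]
the $\mathrm{BV}([0,T];L^1(\R^d))$ seminorm of $t\mapsto \one_{\Omega(u(t))}$, which equals $\sup_P \sum_i |\Omega(u(s_{i+1}))\triangle \Omega(u(s_i))|$ over partitions $P$, is bounded by $(\mu_+\wedge\mu_-)^{-1}$ times $\sup_P \sum_i \Diss[u(s_i),u(s_{i+1})]$. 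So it is enough to bound the total dissipation uniformly in $P$. Telescoping the energy dissipation inequality across $P$ then yields
\[
\sum_i \Diss[u(s_i),u(s_{i+1})] + \mathcal{J}[u(T)] \leq \mathcal{J}[u(0)] + W(0,T),
\]
where $W(s,t)$ is the cumulative boundary-work term appearing on the right of the EDI, so only $W$ remains to be controlled.

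\emph{Step 2 (Pointwise energy bound via rescaling competitor).} For $0\leq s < t\leq T$ close, let $\phi = \phi(x;s,t)\in H^1(U)\cap L^\infty(U)$ be a strictly positive function on $\overline{U}$ with $\phi|_{\partial U} = F(\cdot,t)/F(\cdot,s)$; concretely, take $\phi$ to be the harmonic extension of $F(t)/F(s)$, which by the maximum principle satisfies
\[
\inf_{\partial U}(F(t)/F(s)) \leq \phi \leq \sup_{\partial U}(F(t)/F(s)),
\]
and, since $F \geq \e_0 > 0$, is bounded away from $0$ and $\infty$ with $\|\log \phi\|_{L^\infty(U)}\leq \|\log(F(t)/F(s))\|_{L^\infty(\partial U)}$. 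Then $v := \phi\, u(s)$ satisfies $v|_{\partial U} = F(\cdot,t)$ and, since $\phi>0$, $\Omega(v)=\Omega(u(s))$, so $\Diss[u(t),v]=\Diss[u(t),u(s)]$. Global stability of $u(t)$ at time $t$ tested against $v$ gives
\[
\mathcal{J}[u(t)] \leq \mathcal{J}[\phi u(s)] + \Diss[u(s),u(t)].
\]
Expanding $\int|\nabla(\phi u(s))|^2 = \int \phi^2|\nabla u(s)|^2 + 2\int\phi u(s)\nabla\phi\cdot\nabla u(s) + \int u(s)^2|\nabla\phi|^2$ and controlling the cross and gradient-squared terms via elliptic regularity of $\phi$ (together with the uniform bounds $F\geq\e_0$, $\|F\|_{H^{1/2}(\partial U)}\leq C$) produces
\[
\mathcal{J}[\phi u(s)] \leq \bigl(1 + C\|\log(F(t)/F(s))_+\|_{L^\infty(\partial U)}\bigr)\mathcal{J}[u(s)] + \text{l.o.t.}
\]
Combined with the telescoped EDI from Step 1, one arrives in the infinitesimal limit at the differential inequality
\[
\frac{d}{dt}\mathcal{J}[u(t)] \leq C\,\|\partial_t(\log F)_+\|_{L^\infty(\partial U)}\, \mathcal{J}[u(t)].
\]

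\emph{Step 3 (Gronwall and closure).} Gronwall then yields
\[
\sup_{t\in[0,T]} \mathcal{J}[u(t)] \leq \mathcal{J}[u(0)] \exp\bigl(C\,\|\partial_t(\log F)_+\|_{L^1([0,T];L^\infty(\partial U))}\bigr),
\]
and plugging this uniform energy bound back into the telescoped EDI bounds the total dissipation by the same right-hand side. Combined with Step 1 this is the desired $\mathrm{BV}$ estimate (with constant $(\mu_+\wedge\mu_-)^{-1}$ and the exponential of $\|\partial_t(\log F)_+\|_{L^1_tL^\infty_x}$).

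The main technical obstacle is Step 2 in the $x$-dependent setting: a naive multiplicative rescaling with spatially varying $\phi$ introduces extra terms $\int u\nabla\phi\cdot\nabla u$ and $\int u^2|\nabla\phi|^2$ in the expansion of the Dirichlet integral that are absent when $F$ is $x$-independent (as in \cite{FeldmanKimPozar}*{Lemma 5.7}, where $\phi$ is a scalar and $\nabla\phi=0$). Taking $\phi$ to be the harmonic extension of $F(t)/F(s)$ and using elliptic regularity together with $F\geq\e_0>0$ keeps these terms genuinely lower order, but controlling them cleanly enough to get the sharp logarithmic dependence $\|\partial_t(\log F)_+\|$ (rather than $\|\partial_t F\|/\e_0$) requires the multiplicative, as opposed to additive, rescaling, and some care in the bookkeeping.
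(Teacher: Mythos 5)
Your Step 1 is fine and matches the paper: reduce to bounding the total dissipation, then telescope the energy dissipation inequality, which leaves the cumulative boundary-work term $W(0,T)=\int_0^T\langle 2\dot F,\partial_\nu u\rangle\,dt$ to control. The problem is in Step 2, where you never actually control $W$. Testing global stability of $u(t)$ against the rescaled competitor $\phi\,u(s)$ yields
\[
\mathcal{J}[u(t)]\ \leq\ \|\phi\|_{L^\infty}^2\,\mathcal{D}[u(s)]+|\Omega(u(s))|+\Diss[u(t),\phi u(s)]+\text{l.o.t.},
\]
and the dissipation term on the right is not small; to remove it you must invoke the EDI, but the EDI bounds the dissipation only up to the boundary-work $W(s,t)$, which is exactly the quantity you set out to estimate. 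Combining global stability with the EDI therefore gives an inequality of the shape $\mathcal{J}[u(t)]\leq (1+\varepsilon)\mathcal{J}[u(s)]+C\,W(s,t)+\text{l.o.t.}$, which is circular: the claimed differential inequality $\frac{d}{dt}\mathcal{J}\leq C\|\partial_t(\log F)_+\|_{L^\infty}\mathcal{J}$ does not follow, because you have produced no independent estimate on $W$.

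The paper's argument is different in precisely the place your proposal is incomplete. It works only with the EDI and supplies the missing bound on the boundary-work rate $Q(t)=\langle 2\dot F,\partial_\nu u\rangle$ directly: rewrite $2\dot F=2(\partial_t\log F)\cdot F$, recall from Green's identity (using that $u$ is harmonic on $\Omega(u)$ and vanishes on the free boundary) that $\mathcal{D}[u(t)]=\int_{\partial U}F\,\partial_\nu u\,dS$, and then use the pointwise nonnegativity of the integrand $F\,\partial_\nu u$ on $\partial U$ to deduce $Q(t)\leq 2\|[\partial_t\log F]_+\|_{L^\infty(\partial U)}\mathcal{D}(t)$. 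That bound, fed into the EDI and Gr\"onwall, produces $\mathcal{D}(t)\leq \mathcal{J}(u_0)G(t)^2$ with $G(t)=\exp(\int_0^t\|[\partial_t\log F]_+\|_{L^\infty}ds)$, which then yields the dissipation bound. This is the essential mechanism that your multiplicative-rescaling competitor does not capture; the rescaling idea does give the right shape of the energy growth rate, but by itself it is only an inequality with the dissipation (and hence with $W$) on the wrong side. To repair your proposal you would need to add the Green's identity rewrite and the sign observation for $F\partial_\nu u$, at which point the rescaling competitor becomes superfluous. Finally, note that the Gr\"onwall as you have set it up produces an extraneous constant $C$ in the exponent, whereas the paper obtains the exact constant $1$ in $\exp(\|\partial_t(\log F)_+\|_{L^1_tL^\infty_x})$ because the coefficient $2$ in $Q\leq 2\|\cdot\|\mathcal{D}$ is absorbed exactly by the $G^2$ appearing in the dissipation estimate.
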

\begin{proof}
Define $Q(t) := \langle 2\dot{F}(t), \frac{\partial u}{\partial n}\rangle_{L^2(\partial U)}$ and define the total dissipation
\[\Diss(\Omega(u(t));[0,T]):= \sup_{\mathcal{P}} \sum_{j} \Diss(\Omega(u(t_j)),\Omega(u(t_{j+1}))).\]  
The supremum is over $\mathcal{P}$ partitions of $[0,T]$. By the energy dissipation inequality, applied repeatedly on an arbitrary partition,
\[ \mathcal{J}(u_0)-\mathcal{J}(u(t))  + \int_{0}^{t} \mathcal{Q}(s) \ ds \geq \Diss(\Omega(u(t));[0,T]) \geq (\mu_+ \wedge \mu_-) [{\bf 1}_{\Omega(\cdot)}]_{\textup{BV}([0,t];L^1)}.\]
Note by integration by parts
\[\mathcal{D}(u(t))  = \int_{\partial U} F(t,x) \frac{\partial u}{\partial n}(t,x) \ dS.\]
We can use this to write, using the positivity of the integrand $F \frac{\partial u}{\partial n}$ on $\partial U$,
\begin{align*}
\mathcal{Q}(t) &= \langle 2 \dot{F}(t,\cdot), \frac{\partial u}{\partial n}(t,\cdot)\rangle_{L^2(\partial U)} \\
&= \langle 2 \frac{d}{dt}(\log F)(t,\cdot), F(t,\cdot)\frac{\partial u}{\partial n}(t,\cdot)\rangle_{L^2(\partial U)}\\
&\leq  2\| [\frac{d}{dt}(\log F)(t,\cdot)]_+\|_{L^\infty(\partial U)}\mathcal{D}(t).
\end{align*}
Calling $g(t) = \| [\frac{d}{dt}(\log F)(t,\cdot)]_+\|_{L^\infty}$ and $G(t) = \exp(\int_0^tg(s) ds)$ we obtain, 
\[\mathcal{Q}(t) \leq 2 \frac{d}{dt}(\log G)(t)\mathcal{D}(t).\]
Also it is worth recording, for any $t_1 \geq t_0 \geq 0$,
\[  \frac{F(t_1,x)}{F(t_0,x)} = \exp\left(\int_{t_0}^{t_1} \frac{d}{dt}(\log F)(t,x) \ dt\right)\]
and so, bounding the integrand above and below by $\pm g(t)$ in the natural way,
\begin{equation}\label{e.F-G-bounds}
\frac{G(t_0)}{G(t_1)} \leq \frac{F(t_1,x)}{F(t_0,x)} \leq \frac{G(t_1)}{G(t_0)} \ \hbox{ for all } \ x.
\end{equation}

Note that $\mathcal{D}(t) \leq \mathcal{J}(u(t))$ so in particular
\begin{align*}
\mathcal{D}(t) &\leq \mathcal{J}(u_0)+\int_{0}^{t}\mathcal{Q}(s) \ ds \\
&\leq \mathcal{J}(u_0)+\int_{0}^{t}2 \frac{d}{ds}(\log G)(s) \mathcal{D}(s) \ ds
\end{align*}
for all $0 \leq t \leq T$ so by Gr\"onwall
\[\mathcal{D}(t) \leq \mathcal{J}(u_0) G(t)^2.\]

Thus for any $t_1 \geq t_0 \geq 0$
\begin{align*}
\mathcal{J}(u(t_1)) + \mu_+ \wedge \mu_- [{\bf 1}_{\Omega(\cdot)}]_{\textup{BV}([t_0,t_1];L^1)} &\leq \mathcal{J}(u(t_0))\left(1+\frac{1}{G(t_0)^2}\int_{t_0}^{t_1}2\dot{G}(s)G(s) \ ds\right) \\
&= \mathcal{J}(u(t_0))\left(1+\frac{G(t_1)^2-G(t_0)^2}{G(t_0)^2}\right)\\
&=\mathcal{J}(u(t_0))\frac{G(t_1)^2}{G(t_0)^2}
\end{align*}
or, to summarize,
\begin{equation}\label{e.energy-ineq-BV}
\mathcal{J}(u(t_1)) + \mu_+ \wedge \mu_- [{\bf 1}_{\Omega(\cdot)}]_{\textup{BV}([t_0,t_1];L^1)}\leq \mathcal{J}(u(t_0))\frac{G(t_1)^2}{G(t_0)^2}.
\end{equation}
In particular
\[[{\bf 1}_{\Omega(\cdot)}]_{\textup{BV}([0,t];L^1)} \leq \frac{1}{\mu_+\wedge \mu_-} \mathcal{J}(u_0)\frac{G(t)^2}{G(0)^2}.\]
\end{proof}

\begin{lemma}\label{l.energy-time-BV-reg}
    Suppose $u$ is an energy solution on $[0, T]$. Then 
    \begin{equation*}
        |\mathcal{D}(u(t_1)) - \mathcal{D}(u(t_0))| \leq C\|F(t_1) - F(t_0)\|_{H^{1/2}(\partial U)} + (1 + \mu_+ \vee \mu_-)|\Omega(u(t_1))\Delta \Omega(u(t_0))|.
    \end{equation*}
     Thus, $\mathcal{D}(u(t)), \mathcal{J}(u(t))\in \textup{BV}([0, T]; \R)$.
\end{lemma}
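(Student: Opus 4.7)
The plan is to obtain the pointwise energy estimate by testing global stability of $u(t_1)$ against a competitor $v$ whose boundary data is $F(t_1)$ and whose positivity set equals $\Omega(u(t_0))$. With $\Omega(v) = \Omega(u(t_0))$, expanding $\mathcal{J}[u(t_1)] \leq \mathcal{J}[v] + \Diss[u(t_1),v]$ and regrouping gives
\[
\mathcal{D}(u(t_1)) - \mathcal{D}(u(t_0)) \leq (\mathcal{D}[v] - \mathcal{D}[u(t_0)]) + (1+\mu_+)|\Omega(u(t_0)) \setminus \Omega(u(t_1))| - (1-\mu_-)|\Omega(u(t_1)) \setminus \Omega(u(t_0))|,
\]
and since $\mu_- < 1$ the last term is dropped. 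The task reduces to arranging $\mathcal{D}[v] \leq \mathcal{D}[u(t_0)] + C\|F(t_1) - F(t_0)\|_{H^{1/2}(\partial U)}$.

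The natural choice is $v := u(t_0) + \phi$, where $\phi$ solves the Dirichlet problem in $\Omega(u(t_0))$ with boundary values $F(t_1) - F(t_0)$ on $\partial U$ and $0$ on $U \cap \partial \Omega(u(t_0))$, extended by $0$ to the rest of $U$. By \lref{domain_size_estimate}(ii), $\Omega(u(t_0))$ contains a $\delta$-collar of $\partial U$ for a universal $\delta = \delta(d, \partial U, \varepsilon_0)$, so plugging a cutoff of the full $U$-harmonic extension of $F(t_1) - F(t_0)$ in as a competitor for $\phi$ yields
\[
\|\phi\|_{H^1(U)} \leq C\|F(t_1) - F(t_0)\|_{H^{1/2}(\partial U)}
\]
with $C$ depending only on $U$ and $\delta$. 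Since every component of $\Omega(u(t_0))$ meets $\partial U$ (via the same collar inclusion) and $v$ is harmonic in $\Omega(u(t_0))$ with boundary values $F(t_1) \geq \varepsilon_0$ on $\partial U$ and $0$ on the free boundary, the maximum principle forces $v > 0$ throughout $\Omega(u(t_0))$, so $\Omega(v) = \Omega(u(t_0))$ as required. Expanding $\mathcal{D}[v] = \mathcal{D}[u(t_0)] + 2\int_U \nabla u(t_0) \cdot \nabla \phi + \mathcal{D}[\phi]$ and bounding the cross term via Cauchy--Schwarz with the uniform $H^1$ bound of \lref{basic_estimates}(i) then gives the sought Dirichlet estimate, producing one direction of the pointwise inequality. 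The opposite direction follows by an identical construction with the roles of $t_0, t_1$ swapped, using global stability of $u(t_0)$ against $v' = u(t_1) + \phi'$ built analogously inside $\Omega(u(t_1))$.

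The main subtlety, and essentially the only place where the $x$-dependence of $F$ intervenes, is keeping the constant in the harmonic extension estimate for $\phi$ independent of the particular profile $u(t_0)$; this is precisely where the uniform collar property \lref{domain_size_estimate}(ii) is essential, replacing the translation/scaling arguments available in the $x$-independent case. Once the pointwise estimate holds at every pair $(t_0, t_1)$, the $\textup{BV}$ conclusion for $\mathcal{D}(u(\cdot))$ is immediate by summing over any partition of $[0,T]$ using Assumption \ref{assumptions}(ii) for $F \in \textup{BV}([0,T]; H^{1/2}(\partial U))$ and \lref{set-time-BV-reg} for $\one_{\Omega(u(\cdot))} \in \textup{BV}([0,T]; L^1(\R^d))$, and adding the already-$\textup{BV}$ volume term produces $\mathcal{J}(u(\cdot)) \in \textup{BV}([0,T]; \R)$.
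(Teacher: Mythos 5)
Your proof is correct and follows essentially the same line as the paper: test global stability of $u(t_1)$ against a competitor with boundary data $F(t_1)$ and positivity set $\Omega(u(t_0))$, built by adding a harmonic boundary-correction to $u(t_0)$, then regroup and use Lemma \ref{lem:domain_size_estimate} for uniformity of constants, finally summing against the $\textup{BV}$ regularity of $F$ and of $\one_{\Omega(u(\cdot))}$. The one substantive difference is in the choice of correction: the paper solves a Dirichlet problem in a \emph{fixed} collar subdomain $\omega \subset U$ and then needs $\|F(t_1)-F(t_0)\|_{L^\infty(\partial U)} \leq \varepsilon_1$ (i.e.\ $|t_1 - t_0|$ small) so that the pointwise perturbation does not change the positivity set, whereas you solve it in $\Omega(u(t_0))$ itself and obtain $\Omega(v) = \Omega(u(t_0))$ from the strong maximum principle with no smallness requirement, recovering uniformity of the constant by comparing against a cutoff of the $U$-harmonic extension supported in the collar. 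This buys you a pointwise estimate valid for all pairs $(t_0, t_1)$, slightly cleaner than the paper's, at the cost of having to say a few words about why the $H^1$ estimate for your $\phi$ is uniform in the varying domain. Your bookkeeping is in fact marginally sharper: you obtain the coefficient $1+\mu_+$, whereas the paper's displayed inequality has $1+\mu_+ +\mu_-$, and both are $\leq 1 + \mu_+\vee\mu_-$ as stated.

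One small but genuine issue: you invoke Lemma \ref{lem:basic_estimates}(i) for the uniform $H^1$ bound on $u(t_0)$ when bounding the cross term $\int \nabla u(t_0)\cdot\nabla\phi$. But Lemma \ref{lem:basic_estimates}(i) is precisely the statement $\mathcal{D}[u(t)],\one_{\Omega(u(t))}\in\textup{BV}$ that the Appendix (and in particular the present lemma) is in the business of re-establishing for $x$-dependent forcing, so citing it here is circular. The correct reference is Lemma \ref{lem:domain_size_estimate}(i), which gives $\mathcal{J}[u(t_0)] \leq C(\|F(t_0)\|_{H^{1/2}(\partial U)}^2+1)$ from global stability alone, and hence $\|\nabla u(t_0)\|_{L^2(U)} \leq C(\|F\|_{L^\infty_t H^{1/2}_x})$ uniformly in $t_0$. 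With that substitution the argument is complete.
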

Note that \pref{energy-solutions-temporal-reg} follows directly from \lref{set-time-BV-reg} and the final conclusion of \lref{energy-time-BV-reg}.
\begin{proof}

Using the standing assumption that $F$ is bounded away from $0$ and Lemma \ref{lem:domain_size_estimate}, we fix an $\varepsilon_1 > 0$ and a domain $\omega\subset U$ such that any globally stable solution for $F(t)$ for any $t$ is at least $\varepsilon_1$ on $\omega$. Then, take $h^{s,t}\in H^1(U)$ to solve
\begin{equation}\label{eq:additive_boundary_data_corrector}
    \begin{cases}
    \Delta h^{s,t} = 0 \hbox{ in }\omega \\
    h^{s,t} = F(t) - F(s) \hbox{ on }\partial U \\
    h^{s,t} = 0 \hbox{ outside }\omega
    \end{cases}
\end{equation}
Note that $\|h^{s,t}\|_{H^1(U)} \leq C\|F(t) - F(s)\|_{H^{1/2}(\partial U)}$, with constant depending on the fixed domain $\omega$. On the other hand, by maximum principle, $\|h^{s,t}\| \leq \|F(t) - F(s)\|_{L^\infty(\partial U)}$, so assuming $\|F(t) - F(s)\|_{L^\infty(\partial U)} \leq \varepsilon_1$, the perturbation by $h^{s,t}$ does not modify the positivity set of a globally stable profile. We will assume that $|t_1 - t_0|$ is sufficiently small based on the uniform continuity of $F$ in $L^\infty(\partial U)$ for this condition to hold. Now, $u(t_0) + h^{t_0, t_1}\in F(t_1) + H^1_0(U)$, so global stability implies that
\[ \mathcal{J}[u(t_1)] \leq \mathcal{J}[u(t_0) + h^{t_0, t_1}] + \Diss[u(t_1), u(t_0)] \]
We can expand the energy terms and regroup as
\[ \mathcal{D}[t_1] - \mathcal{D}[t_0] \leq 2\langle \nabla u(t_0), h^{t_0, t_1}\rangle_{L^2(U)} + \mathcal{D}[h^{t_0, t_1}] + (1 + \mu_+ + \mu_-)|\Omega(u(t_0))\Delta \Omega(u(t_1))| \]
Using the energy bound from Lemma \ref{lem:domain_size_estimate}, we have a bound for $\|\nabla u\|_{L^\infty_t L^2_x}$ in terms of $\|F\|_{L^\infty_t H^{1/2}_x}$. Thus, we can combine this with the bounds for $h$ in terms of $F$ to reduce to
\begin{equation}
\mathcal{D}[t_1] - \mathcal{D}[t_0] \leq C\|F(t_1) - F(t_0)\|_{H^{1/2}(\partial U)} + (1 + \mu_+ + \mu_-)|\Omega(u(t_0))\Delta \Omega(u(t_1))|    
\end{equation}
Summing this for sufficiently fine partitions, we conclude
\begin{equation}
[\mathcal{D}]_{\textup{BV}([0,T]; \R)} \leq C[F]_{\textup{BV}([0, T]; H^{1/2}(\partial U))} + (1 + \mu_+\vee \mu_-)[{\bf 1}_{\Omega(\cdot)}]_{\textup{BV}([0, T]; L^1(U))} 
\end{equation}
where $C$ depends on the geometry of $\partial U$, on the lower bound $\varepsilon_0$ for $F$, and on $\|F\|_{L^\infty_t H^{1/2}_x}$.

\end{proof}

\begin{theorem}\label{t.mm-energy}
    Suppose $\partial U$ is sufficiently regular, $F\in \mathrm{BV}([0, T]; H^{1/2}(\partial U))$, and $\Omega_0$ is given. Then any minimizing movements solution for $F$ and $\Omega_0$ is an energy solution.
\end{theorem}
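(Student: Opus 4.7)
The plan is to verify each of the three defining properties of an energy solution in \dref{energy_sln} by passing the corresponding discrete property to the limit along the subsequence $\delta_k$ that defines $u$. The boundary condition $u(t,\cdot) = F(t,\cdot)$ on $\partial U$ is immediate from uniform spatial convergence of $u_{\delta_k}(t)$, continuity of $F$ in $t$ from Assumption~\ref{assumptions}(ii), and the fact that $u^{t_j}_\delta$ matches $F(t_j)$ on $\partial U$ for the last partition time $t_j \le t$.

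For global stability, I would first upgrade discrete minimization to the self-stability statement $\mathcal{J}[u^{t_k}_\delta] \leq \mathcal{J}[u'] + \Diss[u^{t_k}_\delta, u']$ for all admissible $u'$, which follows from the elementary triangle inequality $\Diss[V_0,V_2] \leq \Diss[V_0,V_1] + \Diss[V_1,V_2]$ applied to $V_1 = \Omega(u^{t_k}_\delta)$. To pass this to the limit at a given $t$, apply the inequality with the perturbed competitor $u' + h^{t,t_k^\delta}$, where $h^{t,t_k^\delta}$ is the additive corrector of \eqref{eq:additive_boundary_data_corrector} with boundary values $F(t_k^\delta) - F(t)$. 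For $\delta$ small, continuity of $F$ makes the corrector $L^\infty$-small so that $\Omega(u' + h) = \Omega(u')$, and the perturbation contributes an $H^1$-error of size $O(\|F(t) - F(t_k^\delta)\|_{H^{1/2}}) \to 0$. In the limit, lsc of $\mathcal{D}$ under weak-$H^1$ compactness (uniform $H^1$ bound from \lref{basic_estimates}) and $L^1$-convergence $\one_{\Omega(u_{\delta_k}(t))} \to \one_{\Omega(u(t))}$ (from uniform convergence plus Lipschitz and nondegeneracy) yield stability at $u(t)$.

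Next, I would run the same corrector argument with roles reversed to promote the lsc of $\mathcal{J}$ to pointwise convergence: apply discrete self-stability of $u_\delta(t)$ against $u(t) + h^{t,t_k^\delta}$. The corrector does not alter $\Omega(u(t))$ and $\Diss[u_\delta(t),u(t)+h] = \Diss[u_\delta(t),u(t)] \to 0$ by $L^1$ convergence of indicators, giving $\limsup_\delta \mathcal{J}[u_\delta(t)] \leq \mathcal{J}[u(t)]$. Combined with lsc this produces $\mathcal{J}[u_{\delta_k}(t)] \to \mathcal{J}[u(t)]$ and, a fortiori, strong $H^1_{\mathrm{loc}}$ convergence. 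With energy continuity in hand, the EDI is derived by using $u^{t_{k-1}}_\delta + h^{t_{k-1},t_k}$ as a competitor for the minimization defining $u^{t_k}_\delta$. Since $u^{t_{k-1}}_\delta$ is harmonic on $\omega \supset \mathrm{supp}(h)$ and $h$ vanishes on the inner boundary of $\omega$, integration by parts exactly as in the proof of \lref{energy-time-BV-reg} gives
\[
\mathcal{J}[u^{t_{k-1}}_\delta + h] - \mathcal{J}[u^{t_{k-1}}_\delta] = 2\int_{\partial U}(F(t_k) - F(t_{k-1}))\partial_\nu u^{t_{k-1}}_\delta\,dS + O(\|F(t_k) - F(t_{k-1})\|_{H^{1/2}}^2),
\]
with no change in positivity set and hence no dissipation for this perturbation. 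Combined with minimality and telescoped over partition points in $(t_{k_\delta(t_0)},t_{k_\delta(t_1)}]$, the quadratic remainder is controlled by $\max_k \|F(t_k) - F(t_{k-1})\|_{H^{1/2}} \cdot [F]_{\textup{BV}} \to 0$ via continuity of $F$, the Riemann--Stieltjes work sum converges to $\int_{t_0}^{t_1}\int_{\partial U} 2\partial_\nu u\,dF\,dS$ using $H^{1/2}$-continuity of $F$ and strong trace convergence of $u_{\delta_k}$, and the dissipation subadditivity combined with $L^1$ indicator convergence gives $\sum_k \Diss_k \geq \Diss[u_\delta(t_0), u_\delta(t_1)] \to \Diss[u(t_0), u(t_1)]$.

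The conceptual crux is the two-sided use of discrete stability to promote lsc of $\mathcal{J}$ to pointwise convergence, so that the telescoped discrete EDI has a genuine limit rather than only a one-sided semicontinuous bound; without this upgrade, $\mathcal{J}[u(t_0)]$ cannot be controlled from above and the argument stalls. The subtlest technical step is the Riemann--Stieltjes convergence of the boundary work term, which requires both the strong $H^{1/2}$-trace convergence of $u_{\delta_k}(t)$ gained from the preceding step and the BV-in-$t$ regularity of $F$ into $H^{1/2}(\partial U)$.
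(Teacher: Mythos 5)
Your proposal follows essentially the same strategy as the paper: use the additive corrector $h^{s,t}$ of \eqref{eq:additive_boundary_data_corrector} to handle the varying Dirichlet data, derive the one-step discrete energy dissipation inequality \eqref{eq:one_step_dissipation} by integration by parts, telescope, and pass to the limit. Within that, your two-sided use of discrete stability to upgrade lower semicontinuity to the pointwise convergence $\mathcal{J}[u_{\delta_k}(t)]\to\mathcal{J}[u(t)]$ is a nice, explicit refinement of a step the paper treats more implicitly (it extracts a Helly subsequence for the BV-in-$t$ quantities and then asserts that the limits ``are consistent with the same limit function $u(t)$'' without spelling out the identification). The dissipation triangle inequality to promote discrete minimality to global self-stability, the use of \lref{domain_size_estimate} to fix the region $\omega$ on which stable profiles are bounded below, and the global stability limit via the corrector are all as in the paper.

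The one step where you leave a genuine gap is the convergence of the telescoped boundary-work term. You invoke ``$H^{1/2}$-continuity of $F$ and strong trace convergence of $u_{\delta_k}$,'' but the trace convergence you actually have is pointwise in $t$ (this is what \dref{mmsln} guarantees), whereas the Riemann--Stieltjes sum $\sum_k \langle 2(F(t_k)-F(t_{k-1})), \partial_\nu u^{t_{k-1}}_\delta\rangle$ evaluates the $\delta$-dependent step function $\partial_\nu u_\delta$ at partition points that themselves move with $\delta$. Writing the sum as $\int_{t_0}^{t_1} \partial_\nu u_\delta(s^-)\,dF(s)$, one needs $\partial_\nu u_\delta(s_\delta)\to\partial_\nu u(s)$ for $dF$-a.e.\ $s$ where $s_\delta\nearrow s$ are the preceding partition points; pointwise-in-$t$ convergence at fixed $t$, together with uniform bounds, does not a priori give this double-limit. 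The paper closes this by first proving uniform-in-$\delta$ BV-in-$t$ estimates on $\mathcal{D}[u_\delta(t)]$ and $\langle 2\dot F,\partial_\nu u_\delta\rangle$ (the discrete analogue of \pref{energy-solutions-temporal-reg}) and then applying Helly's selection theorem; the equi-BV control is precisely what rules out oscillations between partition times and lets the Riemann--Stieltjes sums pass to the limit. Your argument would need to import that equi-BV estimate explicitly rather than cite it as a technicality, since it is load-bearing for the flux term even though it is not needed for the other two properties of an energy solution.
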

\begin{proof}

We argue as in \cite{FeldmanKimPozar}*{Theorem 6.2}. First, we derive BV estimates to obtain compactness of the discrete scheme and energy convergence. Passing to a convergent subsequence, we can then check global stability and the energy dissipation inequality.

The main ingredient in the BV estimates is a discrete version of the energy dissipation inequality. Using the notation of Definition \ref{def:mmsln}, we want to develop a competitor argument to compare $u^{t_{k-1}}_\delta$ to $u^{t_k}_\delta$. As in Lemma \ref{l.energy-time-BV-reg}, we use Lemma \ref{lem:domain_size_estimate} to fix a domain $\omega$ on which any globally stable profile for $F(t)$ at any time $t$ is bounded away from 0. We then take $h^{s,t}\in H^1(U)$ as in \eqref{eq:additive_boundary_data_corrector}, so that $u^{t_{k-1}}_\delta + h^{t_{k-1}, t_{k}}$ is admissible as a competitor to $u^{t_k}_\delta$, in the minimizing movements step from $u^{t_{k-1}}_\delta$. Moreover, we will always assume that $\delta$ is sufficiently small based on the uniform continuity of $F$ in $L^\infty(\partial U)$ to ensure that $\|F(t_k) - F(t_{k-1})\|_{L^\infty(\partial U)} \leq \varepsilon_1$ holds, so that $\Omega(u^{t_{k-1}}_\delta) = \Omega(u^{t_{k-1}}_\delta + h^{t_{k-1}, t_k})$. Then the minimality of $u^{t_k}_\delta$ implies that
\begin{align*}
   \mathcal{J}[u^{t_k}_\delta] + \Diss[u^{t_{k-1}}_\delta, u^{t_k}_\delta] 
   &\leq \mathcal{J}[u^{t_{k-1}}_\delta + h^{t_{k-1}, t_k}]
   \\&= \mathcal{J}[u^{t_{k-1}}_\delta] + \int_U 2\nabla u^{t_{k-1}}_\delta \cdot \nabla h^{t_{k-1}, t_k}\, dx + \|\nabla h^{t_{k-1}, t_k}\|_{L^2(U)}^2
\end{align*}
Integrating by parts in the $\nabla u^{t_{k-1}}_\delta \cdot \nabla h^{t_{k-1}, t_k}$ term, we get
\[ 2\int_{\partial U} (F(t_k) - F(t_{k-1}))\partial_\nu u^{t_{k-1}}_\delta dx  \]
which is recognizable as the time-discrete form of the pressure integral. We conclude the following discrete-time energy dissipation inequality:
\begin{align}\label{eq:one_step_dissipation}
    \Diss[u^{t_{k-1}}_\delta, u^{t_k}_\delta] &\leq \mathcal{J}[u^{t_{k-1}}_\delta] - \mathcal{J}[u^{t_{k}}_\delta] + 2\int_{\partial U} (F(t_k) - F(t_{k-1}))\partial_\nu u^{t_{k-1}}_\delta dx\notag
    \\&+ C\|F(t_k) - F(t_{k-1})\|_{H^{1/2}(\partial U)}^2
\end{align}
We remark that since $F\in \mathrm{BV}([0,T]; H^{1/2}(\partial U))\cap C([0,T], H^{1/2}(\partial U))$, we have $\sum_k \|F(t_k) - F(t_{k-1})\|_{H^{1/2}(\partial U)}^2 \to 0$ as $\delta\to 0$, so the error term vanishes in the continuous-time limit. Thus, by summing \eqref{eq:one_step_dissipation} and employing the arguments of Proposition \ref{p.energy-solutions-temporal-reg} with minor modifications for the time-discrete case, we obtain bounds on $\one_{\Omega(u_\delta(t))} \in \mathrm{BV}([0, T]; L^1(\R^d))$ and $\mathcal{J}[u_\delta(t)], \mathcal{D}[u_\delta(t)], \langle 2\dot{F}(t), \frac{\partial u}{\partial n}\rangle_{L^2(\partial U)}\in \mathrm{BV}([0, T]; \R)$ which are uniform as $\delta\to 0$. By Helly's selection principle, we can then take a subsequence along which all of these converge pointwise in $t$. By the same arguments as in \cite{FeldmanKimPozar}, we subsequently get uniform convergence of $u_\delta(t)$ as $\delta \to 0$ and Hausdorff convergence of $\Omega(u_\delta(t))$ as $\delta\to 0$, and one can check that the limits of all of these objects are consistent with the same limit function $u(t)$.

Now, we must check that this $u(t)$ is indeed an energy solution. We have immediately that $u(t)\in F(t) + H^1_0(U)$ using the time regularity of $F$ and the convergence estimates discussed above. We also get the energy dissipation inequality by summing \eqref{eq:one_step_dissipation} and applying the convergence estimates to each term in the inequality. Thus, it remains only to verify global stability.

Let $v\in F(t) + H^1_0(U)$ be a competitor to $u(t)$ in the definition of global stability; note that without loss of generality, we may assume $v$ itself is globally stable. Referring back to the discrete scheme, let $k(\delta)$ denote the index such that $t\in [t_{k(\delta)}, t_{k(\delta) + 1})$. We once again employ the perturbation from \eqref{eq:additive_boundary_data_corrector} to form a competitor argument. Note that $v + h^{t, t_{k(\delta)}}$ is admissible as a competitor to $u^{t_{k(\delta)}}_\delta$, which is globally stable by minimality in the discrete scheme. Assuming once again that $\delta$ is sufficiently small to avoid the perturbation modifying the positivity set, we get
\[ \mathcal{J}[u^{t_{k(\delta)}}_\delta] \leq \mathcal{J}[v + h^{t, t_{k(\delta)}}] + \Diss[u^{t_{k(\delta)}}_\delta, v] \]
By definition of $k(\delta)$, we may rewrite this as
\[ \mathcal{J}[u_\delta(t)] \leq \mathcal{J}[v] + \Diss[u_\delta(t), v] + C\|F(t) - F(t_{k(\delta)})\|_{H^{1/2}(\partial U)} \]
Then using the convergence of $\mathcal{J}[u_\delta(t)]$ and $\one_{\Omega(u_\delta(t))}$, we can pass to the limit to get 
\[ \mathcal{J}[u(t)] \leq \mathcal{J}[v] + \Diss[u(t), v] \]

\end{proof}

\bibliography{minimizing-movements-uniqueness.bib}

\end{document}